\newcommand\bschi{{\boldsymbol \chi}}
\newcommand\bsG{{\boldsymbol G}}
\newcommand\bsf{{\boldsymbol f}}
\newcommand\bsg{{\boldsymbol g}}
\newcommand\BN{{\mathbb N}}
\newcommand\BQ{{\mathbb Q}}
\newcommand\BR{{\mathbb R}}
\newcommand\BZ{{\mathbb Z}}
\newcommand\Coker{\operatorname{Coker}}
\newcommand{\cx}{\operatorname{cx}}
\newcommand\depth{\operatorname{depth}}
\newcommand\grade{\operatorname{grade}}
\newcommand\edim{\operatorname{edim}}
\newcommand\mult{\operatorname{mult}}
\newcommand\ext{\operatorname{Ext}}
\newcommand\tor{\operatorname{Tor}}
\newcommand{\Tor}[4]{\operatorname{Tor}_{#1}^{#2}(#3,#4)}
\newcommand{\Ext}[4]{\operatorname{Ext}^{#1}_{#2}(#3,#4)}
\newcommand{\CE}[3]{\operatorname{E}_{#1}^{#2,#3}}
\newcommand{\HE}[3]{\operatorname{E}^{#1}_{#2,#3}}
\newcommand\fm{{\mathfrak m}}
\newcommand\fn{{\mathfrak n}}
\newcommand\fp{{\mathfrak p}}
\newcommand\fq{{\mathfrak q}}
\newcommand\Hom{\operatorname{Hom}}
\newcommand\colim{\operatorname{colim}\,}
\newcommand{\hh}[1]{\operatorname{H}(#1)}
\newcommand{\HH}[2]{\operatorname{H}_{#1}(#2)}
\newcommand\Image{\operatorname{Im}}
\newcommand\Ker{\operatorname{Ker}}
\newcommand\length{\operatorname{length}}
\newcommand{\les}{\leqslant}
\newcommand{\ges}{\geqslant}
\newcommand{\lra}{\longrightarrow}
\newcommand{\ov}{\overline}
\newcommand{\wh}{\widehat}
\newcommand{\wt}{\widetilde}
\newcommand{\pdim}{\operatorname{proj\,dim}}
\newcommand{\idim}{\operatorname{inj\,dim}}
\newcommand{\rank}{\operatorname{rank}}
\newcommand{\shift}{{\sf\Sigma}}
\newcommand\Spec{\operatorname{Spec}}
\newcommand\mspec{\operatorname{Max}}
\newcommand{\syz}[3][Q]{{\Omega^{#1}_{#2}(#3)}}
\newcommand{\xla}{\xleftarrow}
\newcommand{\xra}{\xrightarrow}
\newtheorem{theorem}{Theorem}[section]
\newtheorem{proposition}[theorem]{Proposition}
\newtheorem{lemma}[theorem]{Lemma}
\newtheorem{corollary}[theorem]{Corollary}
\theoremstyle{definition}
\newtheorem{example}[theorem]{Example}
\newtheorem{chunk}[theorem]{}
\theoremstyle{remark}
\newtheorem{remark}[theorem]{Remark}
{}
{}
{}
{}
{}
\newtheorem*{Question}{Question}
\numberwithin{equation}{theorem}
\begin{document}

\title[Persistence of homology]{Persistence of homology over\\ commutative noetherian rings}

\author[Avramov]{Luchezar L.~Avramov}
\address{Department of Mathematics,
University of Nebraska, Lincoln, NE 68588, U.S.A.}
\email{avramov@math.unl.edu}

\author[Iyengar]{Srikanth B.~Iyengar}
\address{Department of Mathematics,
University of Utah, Salt Lake City, UT 84112, U.S.A.}
\email{iyengar@math.utah.edu}

\author[Nasseh]{Saeed Nasseh}
\address{Department of Mathematical Sciences,
Georgia Southern University, Statesboro, GA 30460, U.S.A.}
\email{snasseh@georgiasouthern.edu}

\author[Sather-Wagstaff]{Sean K.~Sather-Wagstaff}
\address{Department of Mathematical Sciences,
Clemson University,
O-110 Martin Hall, Box 340975,
Clemson, SC 29634, U.S.A.}
\email{ssather@clemson.edu}

\thanks{Research partly supported by NSF grants DMS-1103176 (LLA) and DMS-1700985 (SBI)}

\date{\today}

\keywords{finite injective dimension, finite projective dimension, Tor}
\subjclass[2010]{13D07 (primary); 13D02, 13D40 (secondary)}

\begin{abstract}
We describe new classes of noetherian local rings $R$ whose finitely generated  modules $M$ have the property that $\Tor iRMM=0$ for $i\gg0$ implies that $M$  has finite projective dimension, or $\Ext iRMM=0$ for $i\gg0$ implies that  $M$ has finite projective dimension or finite injective dimension.
  \end{abstract}

\maketitle

\section*{Introduction}
This work concerns homological  dimensions of modules over commutative noetherian rings. Over such a ring $R$, a finite (meaning, finitely generated) $R$-module $M$ has finite projective dimension if $\Tor iRMN=0$ for $i\gg 0$  for each finite $R$-module $N$.

We say that $R$ is \emph{homologically persistent}, or \emph{$\tor$-persistent}, if every finite  $R$-module $M$ for which $\Tor iRMM=0$ for $i\gg 0$ satisfies $\pdim_RM<\infty$.

Every regular ring is $\tor$-persistent for then $\pdim_RM$ is finite by fiat. Results of Avramov and Buchweitz~\cite{AB} imply  that the same holds for locally complete intersection rings.  The work reported in our paper was sparked by that of  \c{S}ega~\cite[pp.~1266]{Se2} who verifies it for  certain classes of local rings, and says, in paraphrase:  ``The author does not know any examples  of rings that are \emph{not} $\tor$-persistent.'' Neither do we, so we ask:

\begin{Question}
\label{qu:tp}
Is every ring $\tor$-persistent?
\end{Question}

The purpose of this paper is to present some new non-trivial examples of Tor-persistent rings. It is not hard to prove that this property can be detected locally---see Proposition~\ref{pr:bass-murthy}---so for the rest of the introduction $R$ will be a local ring.

A central result of our work is that $R$ is Tor-persistent if its completion has a \emph{deformation} $Q$ (that is to say, it is of the form $Q/(\bsf)$ where $\bsf=f_{1},\dots,f_{c}$ is a $Q$-regular sequence) that satisfies any one of the following conditions:
\begin{itemize}
\item
$\edim Q-\depth Q\le3$.
 \item
$\edim Q-\depth Q=4$ and $Q$ is Gorenstein.
 \item
$\edim Q-\depth Q=4$ and $Q$ is Cohen-Macaulay, almost complete intersection,
with $\frac12\in Q$.
 \item
$Q$ is one link from a complete intersection.
 \item
$Q$ is two links from a complete intersection and is Gorenstein.
 \item
$Q$ is Golod.
  \item
$\mult Q\le7$ and $Q$ is Cohen-Macaulay.
  \end{itemize}

This result is contained in Theorem~\ref{thm:mainTor}. In many of the cases considered above, we deduce the Tor-persistence of $R$ by verifying that $Q$ has the stronger property that if $\Tor iQMN=0$ for $i\gg 0$, then $M$ or $N$ has finite projective dimension over $Q$. This property, which we call \emph{Tor-friendliness}, has been studied by Jorgensen~\cite{Jo}, Huneke, \c{S}ega, and Vraicu~\cite{HSV}, \c{S}ega~\cite{Se1}, Nasseh and Yoshino~\cite{NY}, Gosh and Puthenpurakal~\cite{GP}, Lyle and Monta\~no~\cite{LM}, among others, and our work extends some of their results.  Tor-friendliness of $R$ can also be recast as the statement that each finite $R$-module $M$ of infinite projective dimension is a test module for finite projective dimension, in the sense of Celikbas, Dao, and Takahashi~\cite{CDT}.

To prove Theorem~\ref{thm:mainTor}, we draw on a panoply of results concerning the homological properties of the classes of rings appearing in the list above, including structure theorems for their Koszul homology algebra proved by Kustin~\cite{Ku1} in collaboration with Miller~\cite{KM1,KM2,KM3},  Jacobsson and Miller~\cite{JKM}, Avramov and Miller~\cite{AKM}, and Palmer Slattery \cite{KP}. To the same end, we also establish a number of general statements for recognizing Tor-persistence, and for tracking that property along homomorphisms of rings. Notable among these is the result below which covers the case of complete intersection rings.

\smallskip

\emph{Assume the residue field of $R$ is algebraically closed and that $R$ has a deformation $Q$ such that $0$ is the only finite $Q$-module with constant Betti numbers. If $Q$ is $\tor$-persistent, then so is $R$.}

\smallskip

This is the content of Theorem~\ref{thm:rigidT}. To our vexation we have been unable to eliminate the hypothesis on $Q$-modules with constant Betti numbers, except in the  case when the generators of the ideal $\Ker(Q\to R)$ can be chosen in $\fn\setminus \fn^{2}$, where $\fn$ is the maximal ideal of $Q$; see Proposition~\ref{prop:SegaFriendly}(1). This latter result, and some other intermediate ones, have also been proved recently by Celikbas and Holm~\cite{CH}.

In Sections \ref{sec:Vanishing of Ext} and \ref{sec:ARP} we turn to cohomological analogs of Tor-persistence. This comes in two flavors: We say $R$ is Ext-\emph{persistent} if for a finite $R$-module $M$ satisfies $\Ext iRMM=0$ for $i\gg 0$ only when the projective dimension or the injective dimension of $M$ is finite. While not every ring is Ext-persistent (consider rings with nontrivial semidualizing modules), we prove that the class of rings $R$ in Theorem~\ref{thm:mainTor} are, which implies in particular that there are no nontrivial semidualizing modules, or even complexes, over such rings; see  Corollary~\ref{cor:mainExt}.

The ring $R$ has the \emph{Auslander-Reiten property} if a finite $R$-module $M$ satisfies $\Ext iRM{M\oplus R}=0$ for $i\gg 0$ only when its projective dimension is finite. The nomenclature is motivated by a conjecture of Auslander and Reiten that asserts that every $R$ has this property. Once again, the class of rings $R$ in Theorem~\ref{thm:mainTor} have the Auslander-Reiten property; see Corollary~\ref{cor:AR}.

  \section{Persistence of homology}
    \label{S:Strong-u-rings}
In this section we assemble sufficient conditions that ensure that a ring is $\tor$-persistent.
We first recast the definitions in terms of complexes. Throughout, $R$ denotes a commutative noetherian ring.

  \begin{chunk}
    \label{ch:homdim}
An \emph{$R$-complex} is a family of $R$-linear maps $\partial^U_i\colon U_i\to U_{i-1}$ with $\partial^U_{i-1}\partial^U_i=0$.
We set
  \[
\inf U=\inf\{n\in\BZ\mid U_n\ne0\}
  \quad\text{and}\quad
\sup U=\sup\{n\in\BZ\mid U_n\ne0\}
  \]
and say that $U$ is \emph{bounded below} (respectively,  \emph{above}) if $U=0$ or $\inf U$ (respectively, $\sup U$) is finite.
When both conditions hold $U$ is said to be \emph{bounded}. It is \emph{finite} if the $R$-module $\bigoplus_{i\in\BZ}U_i$ is finite;
in particular, then $U$ is bounded.

The homology of $U$ is the complex $\hh U$ with $\HH iU$ in degree $i$ and $\partial^{\hh U}=0$.
\end{chunk}

We write $\Spec R$ (respectively, $\mspec R$) for the set of prime ideals (respectively, maximal ideals) of $R$, with the Zariski topology.

\begin{chunk}
\label{ch:perfect}
An $R$-complex is said to be \emph{perfect} if it is quasi-isomorphic to a bounded complex of finite
projective $R$-modules.  This property can be verified homologically: An $R$-complex $U$ is perfect if and only if $\hh U$ is degreewise finite and bounded below and  $\Tor{}R{R/\fm}U$ is bounded for each $\fm$ in $\mspec R$.

Indeed, by \cite[5.5.F]{AF} the boundedness implies that the $R_\fm$-complex $U_\fm$ is perfect, and then \cite[4.1]{AIL} yields that $U$ is perfect. We refer to \cite{AF} for the construction of Ext and Tor for $R$-complexes and related notions.
  \end{chunk}

  \begin{proposition}
  \label{pr:persistent}
The following conditions on a ring $R$ are equivalent.
  \begin{enumerate}[\quad\rm(i)]
 \item
The ring $R$ is $\tor$-persistent.
 \item
Each $R$-complex $U$ with $\hh U$ finite and $\Tor{}RUU$ bounded is perfect.
\item
Each $R$-complex $U$ with $\hh U$ of finite length and $\Tor{}RUU$ bounded is perfect.
  \end{enumerate}
  \end{proposition}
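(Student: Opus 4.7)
The plan is to establish the cycle (ii) $\Rightarrow$ (iii) $\Rightarrow$ (i) $\Rightarrow$ (ii). The first implication is immediate, since a finite-length module is finite.

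For (iii) $\Rightarrow$ (i), let $M$ be a finite $R$-module with $\Tor{i}{R}{M}{M} = 0$ for $i \gg 0$. Because projective dimension is tested at each $\fm \in \mspec R$, I fix such an $\fm$, pick generators $\bsx = x_{1}, \dots, x_{e}$ of $\fm$, and set $K := \koszul{\bsx}{R}$. Using any resolution of $M$ by finite free $R$-modules, pick a syzygy $\Omega$ high enough that dimension shifting forces $\Tor{i}{R}{\Omega}{\Omega} = 0$ for all $i \ge 1$, and set $U := K \otimes_R \Omega$. Koszul homology is annihilated by $(\bsx) = \fm$, so each $\hh{U}_{i}$ is a finite-dimensional $R/\fm$-vector space, whence $\hh{U}$ has finite length. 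Since $K$ is a bounded complex of finite free $R$-modules,
\[
U \otimes^{L}_R U \simeq (K \otimes_R K) \otimes_R (\Omega \otimes^{L}_R \Omega),
\]
which has bounded homology; so (iii) yields that $U$ is perfect. In particular $\Tor{}{R}{k}{U}$ is bounded, where $k := R/\fm$. Since $\bsx \subseteq \fm$, the complex $k \otimes_R K$ has zero differential and $k^{\binom{e}{i}}$ in degree $i$, which yields
\[
\Tor{j}{R}{k}{U} \cong \bigoplus_{i=0}^{e} \Tor{j-i}{R}{k}{\Omega}^{\binom{e}{i}}.
\]
The summand at $i=0$ forces $\Tor{l}{R}{k}{\Omega} = 0$ for $l \gg 0$; localizing then gives $\pdim_{R_\fm} \Omega_\fm < \infty$, hence $\pdim_{R_\fm} M_\fm < \infty$.

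For (i) $\Rightarrow$ (ii), let $U$ be an $R$-complex with $\hh{U}$ finite and $\Tor{}{R}{U}{U}$ bounded, and fix a resolution $F \to U$ by finitely generated free $R$-modules, bounded below. For $n > \sup \hh{U}$ the tail $\cdots \to F_{n+1} \to F_{n} \to \Omega \to 0$ is a free resolution of the finite module $\Omega := \ker(\partial^F_{n-1})$, giving the dimension-shift isomorphism $\Tor{i}{R}{\Omega}{N} \cong \Tor{n+i}{R}{U}{N}$ for every $N$ and every $i \ge 1$. Applying this to both slots, together with the symmetry of $\Tor{}{R}{\cdot}{\cdot}$, produces $\Tor{i}{R}{\Omega}{\Omega} \cong \Tor{2n+i}{R}{U}{U}$ for $i \ge 1$, which vanishes once $n$ is chosen large enough. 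Condition (i) then yields $\pdim_R \Omega < \infty$, so a sufficiently high syzygy in the tail of $F$ is projective; truncating $F$ at that point gives a bounded complex of finite projective $R$-modules quasi-isomorphic to $U$. Hence $U$ is perfect.

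The main technical obstacle is verifying the dimension-shift identity in (i) $\Rightarrow$ (ii), because $U$ is a complex rather than a module; this requires a spectral-sequence comparison between the filtrations induced by resolutions of the two variables of $\Tor{}{R}{\cdot}{\cdot}$. Once this identity is in hand, the remaining steps (Koszul calculations, the syzygy truncation, and the local nature of projective dimension) are routine.
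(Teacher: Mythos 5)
Your proof is correct and establishes the equivalence via the cycle (ii)$\Rightarrow$(iii)$\Rightarrow$(i)$\Rightarrow$(ii), which is a genuinely different routing from the paper's, although the two crucial ideas are the same. The paper proves (ii)$\Rightarrow$(i), (ii)$\Rightarrow$(iii), (i)$\Rightarrow$(ii), and (iii)$\Rightarrow$(ii); in particular its analogue of your Koszul argument is (iii)$\Rightarrow$(ii), which stays entirely at the level of the complex $U$: one tensors $U$ with $K$ directly, shows $K\otimes_RU$ is perfect via (iii), and then extracts boundedness of $\hh{k\otimes_RU}$ by Künneth, invoking \ref{ch:perfect} to conclude $U$ is perfect. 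Your (iii)$\Rightarrow$(i) instead first replaces the module $M$ by a high syzygy $\Omega$, tensors $\Omega$ with $K$, and then has to close the loop using the Bass--Murthy fact that a finite module over a noetherian ring with locally finite projective dimension has globally finite projective dimension. Both routes use that result (the paper through \ref{ch:perfect}, you through ``projective dimension is tested at each $\fm$''), so neither is more elementary; your route is slightly more economical in the count of implications, while the paper's is arguably cleaner in that the Koszul step never drops down to modules.

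On the step you flag as the ``main technical obstacle'': no spectral sequence is needed. The paper gets the shift $\Tor iRMM\cong\Tor{i+2s}RUU$ directly from the short exact sequence $0\to U_{<s}\to U\to U_{\ges s}\to 0$ and the long exact homology sequences obtained by tensoring it against $U_{\ges s}$ and against $U$; the point is simply that $U_{<s}$ is a bounded complex of finite free modules and $U_{\ges s}$ has homology concentrated in a single degree, so the error terms $\hh{U_{<s}\otimes_R U_{\ges s}}$ and $\hh{U_{<s}\otimes_R U}$ vanish in high degrees. This is the same bookkeeping as in the module case, and your sketch is fine modulo one small point: for the tail of $F$ starting at $F_n$ to resolve $\Omega=\Ker\partial^F_{n-1}$ you need $H_{n-1}(F)=0$ as well, i.e.\ $n\ge\sup\hh{U}+2$ rather than merely $n>\sup\hh{U}$. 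This is harmless since you only need $n$ large, but worth fixing.
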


  \begin{proof}
The implications (ii)$\implies$(i) and (ii)$\implies$(iii) are tautologies. In the rest of the proof we repeatedly use \ref{ch:perfect}, and without comment.

(i)$\implies$(ii).
Replacing $U$ by an appropriate resolution, we may assume that $U$  is semiprojective with each $U_{i}$ a finite free $R$-module. Set $s=\sup\hh U$ and consider the canonical exact sequence of $R$-complexes
\begin{equation}
\label{eq:syzygy}
0\lra U_{<s}\lra U \lra U_{\ges s}\lra 0\,.
\end{equation}
Observe that $\shift^{-s}U_{\ges s}$ is a free resolution of the finite $R$-module  $M=\HH s{U_{\ges s}}$. Thus the exact sequence above implies isomorphisms
\[
\Tor iRMM \cong \Tor {i+2s}RUU \quad\text{for $i\ge 2s+1$}.
\]
Since $\Tor{}RUU$ is bounded it follows that so is $\Tor{}RMM$, and hence, by hypothesis, $M$ is perfect. Since the complex $U_{<s}$ is evidently perfect, it then follows from \eqref{eq:syzygy} that $U$ is perfect as well, as desired.

(iii)$\implies$(ii).
Let $U$ be a complex as in (ii); we may assume it is semiprojective. It suffices to verify that $\Tor{}R{R/\fm}U$, that is to say, $\hh{R/\fm\otimes_{R}U}$, is bounded for each maximal ideal $\fm$ of $R$.  Let $K$ be the Koszul complex on a finite generating set for $\fm$. A standard computation shows that $\hh{K\otimes_{R}U}$ has finite length. Moreover, $K\otimes_{R}U$ is semiprojective, so one gets the first isomorphism below
\begin{align*}
\Tor{}R{K\otimes_{R}U}{K\otimes_{R}U}
	&\cong \hh{(K\otimes_{R}U)\otimes_{R}(K\otimes_{R}U)} \\
	& \cong \hh{(K\otimes_{R}K)\otimes_{R}(U\otimes_{R}U)}
\end{align*}
The second one is due to the associativity of tensor products.  Note that $K\otimes_{R}K$ is a bounded complex of free $R$-modules.
It follows that $\hh{(K\otimes_{R}K)\otimes_{R}(U\otimes_{R}U)}$ is bounded, along with $\hh{U\otimes_{R}U}$. In view of the
preceding isomorphisms and our present hypothesis, $K\otimes_{R}U$ is perfect. Therefore, $\hh{k\otimes_{R}(K\otimes_{R}U)}$
is bounded when $k=R/\fm$. From associativity and the K\"unneth isomorphism we get
\begin{align*}
\hh{k\otimes_{R}(K\otimes_{R}U)}
	&\cong \hh{(k\otimes_{R}K)\otimes_{k}(k\otimes_{R}U)} \\
	&\cong \hh{k\otimes_{R}K} \otimes_{k}\hh{k\otimes_{R}U}
\end{align*}
Since $\hh{k\otimes_{R}K}\ne 0$, it follows $\hh{k\otimes_{R}U}$ is bounded, so $U$ is perfect.
\end{proof}

As a first application, we obtain a change of rings property for homological persistence. A homomorphism of rings $R\to S$ is of \emph{finite flat dimension} if $S$ has finite flat dimension when viewed as an $R$-module by restriction of scalars. In the next result the hypothesis on the maximal ideals holds when $R$ and $S$ are local rings and the homomorphism is local.

\begin{proposition}
\label{prop:descent}
Let $R\to S$ be a homomorphism of finite flat dimension such that the induced map $\mspec S\to \mspec R$ is surjective.

If $S$ is $\tor$-persistent, then so is $R$.
\end{proposition}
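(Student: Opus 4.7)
The plan is to take Proposition~\ref{pr:persistent}(ii) as the working criterion of $\tor$-persistence on both $R$ and $S$. Given an $R$-complex $U$ with $\hh{U}$ finite and $\Tor{}{R}{U}{U}$ bounded, the goal is to prove $U$ is perfect. Replace $U$ by a semiprojective resolution $P$ with each $P_i$ a finite free $R$-module, which is possible because $\hh{U}$ is degreewise finite and bounded below. Set $V = S \otimes_R P$, a semiprojective $S$-complex of finite free $S$-modules.

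The first step is to verify that $V$ satisfies the hypotheses of Proposition~\ref{pr:persistent}(ii) over $S$. Each $\HH{i}{V} = \Tor{i}{R}{S}{P}$ is a subquotient of the finite free $S$-module $V_i$, hence finite, and $\hh{V}$ is bounded because $\fdim_R S < \infty$; thus $\hh{V}$ is finite as an $S$-complex. For the double tensor product, associativity gives
\[
V \otimes_S V \;\cong\; S \otimes_R (P \otimes_R P).
\]
Since $P \otimes_R P$ is semiprojective with finite free terms and $\hh{P \otimes_R P} = \Tor{}{R}{U}{U}$ is bounded, the finite flat dimension of $S$ over $R$ forces $\Tor{}{S}{V}{V} = \hh{V \otimes_S V}$ to be bounded. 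Applying $\tor$-persistence of $S$ to $V$ yields that $V$ is perfect over $S$.

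The second step transfers perfection from $V$ back to $U$. By the homological criterion in \ref{ch:perfect}, it suffices to prove $\Tor{}{R}{k}{U}$ is bounded for each $k = R/\fm$ with $\fm \in \mspec R$. Surjectivity of $\mspec S \to \mspec R$ supplies $\fn \in \mspec S$ with $\fn \cap R = \fm$, and the inclusion $k \hookrightarrow \ell := S/\fn$ is an extension of fields, hence faithfully flat. Perfection of $V$ over $S$ passes to the localization $V_\fn$ over $S_\fn$, so $\hh{\ell \otimes_{S_\fn} V_\fn}$ is bounded.

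The final step is a base-change chase: associativity and flatness of $\ell$ over $k$ yield
\[
\ell \otimes_{S_\fn} V_\fn \;\cong\; \ell \otimes_R P \;\cong\; \ell \otimes_k (k \otimes_R P),
\]
and faithful flatness of $\ell$ over $k$ transports boundedness of the outer term back to boundedness of $\hh{k \otimes_R P} = \Tor{}{R}{k}{U}$. No step looks particularly difficult; the subtlest point is the first, where the finite flat dimension of $R \to S$ must simultaneously ensure finiteness of $\hh{V}$ as an $S$-complex and boundedness of $\Tor{}{S}{V}{V}$ via associativity.
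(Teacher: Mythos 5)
Your proof is correct and follows essentially the same route as the paper: base change along $R\to S$, apply $\tor$-persistence of $S$ to $S\otimes_R P$, then use the surjectivity of $\mspec S\to\mspec R$ and faithful flatness of the residue field extension $k\hookrightarrow\ell$ to deduce boundedness of $\Tor{}R{R/\fm}U$ and hence perfection via \ref{ch:perfect}. The only inessential deviation is your detour through localization at $\fn$; the paper tensors directly with $\ell=S/\fn$ over $S$, which gives the same isomorphism chain without passing to $S_\fn$.
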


\begin{proof}
Let $M$ be a finite $R$-module with $\Tor {}RMM$ bounded.  Let $P$ be a semiprojective resolution of $M$  and set $U=S\otimes_{R}P$. Evidently, the $S$-complex $U$ is semiprojective, which justifies the first isomorphism below.
\begin{align*}
\Tor{}SUU
	&\cong \hh{(S\otimes_{R}P)\otimes_{S}(S\otimes_{R}P))} \\
	&\cong \hh{(S\otimes_{R} P) \otimes_{R}P)} \\
	& \cong \hh{S\otimes_{R} (P\otimes_{R}P)}
\end{align*}
The other isomorphisms are standard.  As $\hh{P\otimes_{R}P}$ is isomorphic to $\Tor{}RMM$, it is bounded.  Given this and the hypothesis that the flat dimension of $S$ over $R$ is finite, it follows that the homology of $S\otimes_{R} (P\otimes_{R}P)$, that is to say,  $\Tor{}SUU$, is bounded. By the same token, the $S$-module $\hh U$ is finite. Since $S$ is $\tor$-persistent, it follows that the $S$-complex $S\otimes_{R}P$ is perfect; see Proposition~\ref{pr:persistent}.

Pick a maximal ideal $\fm$ of $R$.  By hypothesis, there is a maximal $\fn$ of $S$, so that $\fn\cap R=\fm$.  Set $k=R/\fm$ and $l=S/\fn$.
We then have isomorphisms
\[
l\otimes_{k}\Tor{}RkM\cong\Tor{}RlM \cong \hh{l\otimes_{R}P} \cong \hh{l\otimes_{S}(S\otimes_{R}P)}
\]
where $R$ acts on $l$ through the composed ring homomorphism $R\to k\to l$. Since $S\otimes_{R}P$ is perfect, the graded module on the right, and hence $\Tor{}RkM$, is bounded.  As $\fm$ was arbitrary, \ref{ch:perfect} yields that $M$ is perfect as an $R$-complex.
\end{proof}

\begin{proposition}
\label{ch:persistent-complete}
Let $I$ be an ideal in the Jacobson radical of $R$, and let $\wh R$ denote the $I$-adic completion of $R$.

The ring $R$ is $\tor$-persistent if and only if so is $\wh R$.
\end{proposition}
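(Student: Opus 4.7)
The plan is to prove the two implications separately, leveraging Proposition~\ref{prop:descent} in one direction and criterion~(iii) of Proposition~\ref{pr:persistent} in the other.

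For the direction \emph{$\wh R$ $\tor$-persistent implies $R$ $\tor$-persistent}, I would invoke Proposition~\ref{prop:descent} applied to $R\to\wh R$. Completion is flat, so the finite flat dimension hypothesis holds automatically. The surjectivity of $\mspec\wh R\to\mspec R$ follows from the hypothesis that $I$ is contained in the Jacobson radical: any maximal ideal $\fm$ of $R$ contains $I$, and one has the standard identification $\wh R/\fm\wh R\cong R/\fm$, since $R/\fm$ is already $I$-adically complete. Hence $\fm\wh R$ is a maximal ideal of $\wh R$ lying over $\fm$.

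For the converse, I would verify criterion~(iii) of Proposition~\ref{pr:persistent} for $\wh R$. Let $V$ be an $\wh R$-complex with $\hh V$ of finite length and $\Tor{}{\wh R}VV$ bounded; following the reduction in the proof of (i)$\implies$(ii) of Proposition~\ref{pr:persistent}, it suffices to show that any finite-length $\wh R$-module $N$ with bounded $\Tor{}{\wh R}NN$ satisfies $\pdim_{\wh R}N<\infty$. The key observation is that one may descend $N$ to $R$: since $I\wh R$ lies in the Jacobson radical of $\wh R$ and $N$ has finite length, $(I\wh R)^nN=0$ for some $n$, so $N$ is a finite module over $\wh R/I^n\wh R\cong R/I^n$. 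Thus $N$ is a finite $R$-module and $N\otimes_R\wh R\cong N$. Flat base change, combined with the fact that each finite-length $R$-module $\Tor iRNN$ is unchanged by $-\otimes_R\wh R$, yields
\[
\Tor iRNN\ \cong\ \Tor i{\wh R}NN\quad\text{for every }i.
\]
So $\Tor{}RNN$ is bounded, $\tor$-persistence of $R$ gives $\pdim_RN<\infty$, and faithful flatness of $\wh R$ over $R$ promotes this to $\pdim_{\wh R}N<\infty$.

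The only delicate point I foresee is the descent of $N$ to a finite $R$-module with compatible Tor; this hinges both on $I$ sitting inside the Jacobson radical and on the finite-length reduction afforded by Proposition~\ref{pr:persistent}(iii), which turns a general module over $\wh R$ into one annihilated by a power of $I\wh R$. Everything else is routine bookkeeping with flat base change and faithful flatness.
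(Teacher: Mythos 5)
Your argument for "$\wh R$ Tor-persistent implies $R$ Tor-persistent" coincides with the paper's. The converse has a gap. You invoke the reduction from the proof of (i)$\implies$(ii) of Proposition~\ref{pr:persistent} to pass from an $\wh R$-complex $V$ with finite-length homology to a single module $N$ with bounded self-Tor, and you then need $N$ to have finite length in order to conclude $(I\wh R)^nN=0$ and descend $N$ to an $R$-module. But that reduction produces $M=\HH s{U_{\ges s}}=U_s/\Image(\partial^U_{s+1})$ with $s=\sup\hh U$, and this module is finitely generated, not of finite length: it sits in an exact sequence
\[
0\lra\HH sU\lra M\lra\Image(\partial^U_s)\lra 0,
\]
where $\Image(\partial^U_s)$ is a submodule of the finite free module $U_{s-1}$, hence torsion-free, hence of positive Krull dimension whenever it is nonzero. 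Thus the truncation destroys exactly the hypothesis your descent needs. What your argument actually establishes is that finite-length $\wh R$-modules with bounded self-Tor have finite projective dimension; that is weaker than criterion (iii) of Proposition~\ref{pr:persistent}, which is deliberately phrased for complexes, and you have not shown (nor does the paper suggest) that such a module-only statement implies Tor-persistence.

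The paper avoids the problem by never leaving the complex $V$. Since each $\HH iV$ has finite length, the canonical map $\wh R\otimes_RV\to V$ is a quasi-isomorphism; this lets one run the very descent you envision, but at the level of $V$: faithful flatness of $R\to\wh R$ transfers the boundedness of $\Tor{}{\wh R}VV$ to $\Tor{}RVV$, the complex $V$ viewed over $R$ by restriction has finite homology, Tor-persistence of $R$ gives that $V$ is perfect over $R$, and the comparison $\Tor{}{\wh R}{\wh R/\fn}V\cong\Tor{}R{R/\fm}V$ for each $\fn\in\mspec\wh R$ then upgrades perfection from $R$ to $\wh R$ via \ref{ch:perfect}. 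Your descent idea is the right one; it just has to be executed on the complex rather than after an unwarranted passage to a module.
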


\begin{proof}
The completion map $R\to \wh R$ is flat and the assignments $\fn\mapsto \fn\cap R$ and $\fm\mapsto\fm\wh R$
yield inverse bijections $\mspec \wh R\leftrightarrow \mspec R$.  Now Proposition~\ref{prop:descent}
shows that if $\wh R$ is $\tor$-persistent, then so is $R$.

Conversely, let $V$ be an $\wh R$-complex with homology of finite length and $\Tor{}{\wh R}VV$ bounded. When an $\wh R$-module
$L$ has finite length, then it has the same length over $R$ and the canonical $\wh R$-linear map $\wh R\otimes_RL\to L$ is bijective.
Thus, the map $\wh R\otimes_RV\to V$ is a quasi-isomorphism,  so we have isomorphisms
\[
\wh R\otimes_R\Tor{}RVV\xra{\cong} \Tor{}{\wh R}{\wh R\otimes_RV}{\wh R\otimes_RV}\xra{\cong} \Tor{}{\wh R}{V}{V}
\]
Since $R$ is $\tor$-persistent and $R\to\wh R$ is faithfully flat, it follows that $V$ is perfect as an $R$-complex. For every maximal ideal
$\fn$ of $\wh R$ and for $\fm=\fn\cap R$ the isomorphisms
\[
\Tor{}{\wh R}{\wh R/\fn}V =\Tor{}{\wh R}{\wh R/\fm \wh R}V \cong \Tor{}{\wh R}{(R/\fm)\otimes_{R}\wh R} V \cong \Tor{}R{R/\fm}V
\]
show that $\Tor{}{\wh R}{\wh R/\fn}V$ is bounded, so $V$ is perfect, by \ref{ch:perfect} .
\end{proof}

The import of the next result is that Tor-persistence is a local property.

\begin{proposition}
\label{pr:bass-murthy}
The ring $R$ is $\tor$-persistent if and only if $R_\fm$ is $\tor$-persistent for every $\fm\in\mspec R$.
\end{proposition}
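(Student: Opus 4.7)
The plan is to invoke Proposition~\ref{pr:persistent}(iii), which characterizes Tor-persistence of any commutative noetherian ring $A$ as the statement that every $A$-complex $U$ with $\hh U$ of finite length and $\Tor{}AUU$ bounded is perfect. Both implications will then follow by transferring this characterization between $R$ and each $R_\fm$, using flatness of localization together with the homological criterion for perfectness in~\ref{ch:perfect}.

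The main technical input will be a handful of Tor-identities. First, for any $R_\fm$-complex $V$, a flat resolution $F\to V$ over $R_\fm$ is automatically a flat resolution over $R$, and because $V$ is already an $R_\fm$-module we have $F\otimes_R V = F\otimes_{R_\fm}V$; thus $\Tor{}RVV\cong\Tor{}{R_\fm}VV$, and an analogous argument gives $\Tor{}R{R/\fm}V\cong\Tor{}{R_\fm}{R_\fm/\fm R_\fm}V$. Second, any finite length $R_\fm$-module is killed by a power of $\fm$, and so has the same length when viewed over $R$. Third, for any $R$-complex $U$ with free resolution $P$, localizing yields $(\Tor{}RUU)_\fm\cong\Tor{}{R_\fm}{U_\fm}{U_\fm}$.

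Granted these, the forward direction becomes almost immediate: an $R_\fm$-complex $U$ satisfying the hypothesis of (iii) over $R_\fm$ also satisfies it over $R$, so Tor-persistence of $R$ forces $U$ to be perfect over $R$; then $\Tor{}R{R/\fm}U$ is bounded, which matches $\Tor{}{R_\fm}{R_\fm/\fm R_\fm}U$, and \ref{ch:perfect} applied over $R_\fm$ delivers perfectness of $U$ there. For the converse, I would start with an $R$-complex $U$ satisfying the hypothesis over $R$, observe that each $U_\fm$ inherits it over $R_\fm$ via the third identity above, invoke Tor-persistence of $R_\fm$ to conclude $U_\fm$ is perfect over $R_\fm$, and translate the resulting boundedness of $\Tor{}{R_\fm}{R_\fm/\fm R_\fm}{U_\fm}$ back into boundedness of $\Tor{}R{R/\fm}U$ (which is the same module, being $R/\fm$-torsion). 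Since this holds for every $\fm\in\mspec R$, the criterion in~\ref{ch:perfect} gives that $U$ is perfect over $R$. There is no serious obstacle here; the proposition is essentially a repackaging of the local-to-global character of perfectness already built into~\ref{ch:perfect}.
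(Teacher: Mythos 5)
Your proof is correct and matches the paper's strategy for the forward direction almost exactly: for an $R_\fm$-complex $U$ with $\hh U$ of finite length, view it as an $R$-complex via restriction of scalars, note that $\Tor{}RUU\cong\Tor{}{R_\fm}UU$, conclude perfectness over $R$, and pass back to $R_\fm$. (The paper closes this via the quasi-isomorphism $U\simeq U_\fm$ rather than rechecking the Tor criterion over $R_\fm$, but that is cosmetic.)

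For the converse you depart mildly from the paper. The paper works at the level of finite modules: if $\Tor{}RMM$ is bounded then each $\Tor{}{R_\fm}{M_\fm}{M_\fm}$ is bounded, so each $\pdim_{R_\fm}M_\fm$ is finite, and Bass--Murthy (or \cite[5.1]{AIL}) promotes this to $\pdim_RM<\infty$. You instead stay in the complex setting of Proposition~\ref{pr:persistent}(iii): localize $U$, conclude each $U_\fm$ is perfect over $R_\fm$, observe that $\Tor{}R{R/\fm}U$ is $\fm$-local and hence agrees with $\Tor{}{R_\fm}{R_\fm/\fm R_\fm}{U_\fm}$, and invoke the homological criterion in~\ref{ch:perfect}. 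Both arguments ultimately delegate the local-to-global lifting to the same circle of results (\ref{ch:perfect} cites \cite[4.1]{AIL}, the paper cites Bass--Murthy or \cite[5.1]{AIL}), so the routes are logically very close; your version has the small aesthetic advantage of running both directions uniformly through condition (iii), at the cost of carrying the complex formalism where a plain module argument suffices.

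One small point worth making explicit: when you say the hypothesis of (iii) is "inherited" by $U_\fm$ over $R_\fm$, you should note that $\hh{U_\fm}\cong(\hh U)_\fm$ still has finite length over $R_\fm$ because localization of a finite-length module at any prime is again of finite length. This is elementary but is the step that keeps you inside the setting of condition (iii).
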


\begin{proof}
Indeed, let $M$ be a finite $R$-module. If  $\pdim_{R_\fm}M_\fm$ is finite for every $\fm\in\mspec R$, then so is $\pdim_{R}M$; see Bass and Murthy \cite[4.5]{BM}, or \cite[5.1]{AIL}. It follows that when $R_{\fm}$ is $\tor$-persistent for each such $\fm$, then $R$ is $\tor$-persistent.

Assume $R$ is $\tor$-persistent and fix an $\fm\in \mspec R$. By Proposition~\ref{pr:persistent}, it suffices to prove that if $U$ is an $R_{\fm}$-complex with $\hh U$ of finite length and $\Tor{}{R_{\fm}}UU$ is bounded, then $U$ is perfect. Since the residue field at $\fm$ is finite as an $R$-module, $\hh U$ is finite also when viewed as an $R$-complex, via restriction of scalars along the localization $R\to R_{\fm}$. Moreover, the canonical map is an isomorphism:
\[
\Tor{}RUU \xra{\  \cong\ } \Tor{}{R_{\fm}}UU\,.
\]
Thus $\Tor{}RUU$ is bounded. Therefore $U$ is perfect as an $R$-complex and hence also as an $R_{\fm}$-complex, since localization induces a quasi-isomorphism $U\xra{\simeq} U_{\fm}$.
\end{proof}

\begin{chunk}
\label{ch:CST}
As usual, if $R$ is local then $\wh R$ denotes its completion in the adic topology of the maximal ideal.
Recall that the natural map $R\to\wh R$ is a faithfully flat ring homomorphism and that Cohen's Structure
Theorem yields an isomorphism $\wh R\cong P/I$ for some regular local ring $(P,\fp,k)$ and ideal $I$ contained
in $\fp^{2}$; any such isomorphism is called a \emph{minimal Cohen presentation} of $\wh R$.

We say that $R$ is \emph{locally complete intersection} if $\wh{R_{\fm}}$ has a minimal Cohen
presentation with ideal of relations generated by a regular sequence, for each $\fm\in\mspec R$.
   \end{chunk}

\begin{example}
\label{ex:lci-unbounded}
If $R$ is locally complete intersection, then $R$ is $\tor$-persistent.

Indeed, by Proposition \ref{pr:bass-murthy} we may assume $R$ is local.  Avramov and Buchweitz
\cite{AB} attached to every finite $R$-module $M$ a \emph{cohomological support variety}
$\operatorname{V}^*_R(M)$.  It is a closed subset of some projective space over $k$, and
\cite[6.1, 4.7, and 4.9]{AB} show that $\Tor{}RMM$ is bounded if and only if $\operatorname{V}^*_R(M)$
is empty.  By \cite[5.6(9) and 5.6(3)]{AB}, the latter is equivalent to $\Ext iRMk=0$ for $i\gg0$; that is, to $\pdim_RM<\infty$.
    \end{example}

\section{Deformations}
\label{sec:Deformations}

In this section $(R,\fm,k)$ denotes a local ring and $M$ and $N$ are finite $R$-modules.

\begin{chunk}
   \label{ch:def}
By a \emph{deformation} of $R$ to $Q$, we mean a surjective homomorphism $R\twoheadleftarrow Q$ from a local ring $Q$, with kernel generated by a $Q$-regular sequence; it is said to be \emph{embedded} if  $\edim Q=\edim R$.  If every embedded deformation of $R$ is bijective, then we say that $R$ has \emph{no embedded deformation}.
  \end{chunk}

By Proposition \ref{prop:descent}, if $R$ is $\tor$-persistent, then so is $Q$.  It has turned out to be surprisingly difficult to answer the question: Does the converse hold?  We have only been able to obtain positive  answers under additional assumptions. In the next result the hypothesis concerning the Betti numbers of $Q$  is particularly vexing.  Recall that the $n$th \emph{Betti number} of a finite module $L$ over a local ring  $(Q,\fn,k)$ can be defined by the equality $\beta^Q_n(L)=\rank_k\Tor nQLk$.

    \begin{theorem}
  \label{thm:rigidT}
Let $R\twoheadleftarrow Q$ be a deformation such that $k$ is algebraically closed and there is no nonzero
finite $Q$-module with constant Betti numbers.

The ring $R$ is $\tor$-persistent if and only if the ring $Q$ is.
    \end{theorem}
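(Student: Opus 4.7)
The ``only if'' direction is immediate from Proposition~\ref{prop:descent} applied to the deformation map $Q\to R$: this map has finite flat dimension equal to $c$ (the length of the $Q$-regular sequence $\bsf$ generating the kernel, since the Koszul complex on $\bsf$ resolves $R$ over $Q$), and the induced map $\mspec R\to\mspec Q$ is trivially surjective since both rings are local.

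For the converse, assume $Q$ is $\tor$-persistent and let $M$ be a finite $R$-module with $\Tor{i}{R}{M}{M}=0$ for $i\gg 0$. The plan has two stages: first, show $\pdim_Q M<\infty$; then, upgrade this to $\pdim_R M<\infty$ using the hypotheses on $k$ and on $Q$. For the first stage I would invoke the Cartan--Eilenberg change-of-rings spectral sequence
\[
E^2_{p,q} \;=\; \Tor{p}{R}{\Tor{q}{Q}{R}{M}}{M} \;\Longrightarrow\; \Tor{p+q}{Q}{M}{M}.
\]
Because $\bsf$ annihilates the $R$-module $M$, the Koszul complex on $\bsf$ tensored over $Q$ with $M$ has zero differentials, identifying $\Tor{q}{Q}{R}{M}\cong M^{\binom{c}{q}}$ as $R$-modules for $0\le q\le c$. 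Hence each $E^2_{p,q}$ is a direct sum of $\binom{c}{q}$ copies of $\Tor{p}{R}{M}{M}$, with $q$ restricted to the finite range $\{0,\dots,c\}$. Boundedness of $\Tor{}{R}{M}{M}$ in $p$ therefore propagates to boundedness of $\Tor{}{Q}{M}{M}$, and the $\tor$-persistence of $Q$ forces $\pdim_Q M<\infty$.

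The second stage is the heart of the argument and, I expect, its main obstacle. The plan is to argue by contradiction: assuming $\pdim_R M=\infty$ despite $\pdim_Q M<\infty$, construct from $M$ a nonzero finite $Q$-module with constant Betti numbers, contradicting the hypothesis on $Q$. The finite projective dimension of $M$ over $Q$ allows Gulliksen's cohomology operators $\chi_1,\dots,\chi_c$ to act on $\Ext{*}{R}{M}{M}$ (and dually on $\Tor{}{R}{M}{M}$), making these finitely generated modules over the polynomial ring $S=R[\chi_1,\dots,\chi_c]$ and endowing $M$ with a relative support variety in $\mathbb P^{c-1}_k$. Since $k$ is algebraically closed, one can select a rational point of this variety (if it is nonempty) and specialize along the corresponding $k$-linear combination $f=\sum_i a_i f_i$; an appropriate syzygy or associated graded piece arising from the Shamash-style resolution should then yield a nonzero finite $Q$-module whose Poincar\'e series is $\beta/(1-t)$ for some $\beta>0$. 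Identifying precisely which auxiliary module exhibits strictly constant (not merely eventually constant) Betti numbers over $Q$, and verifying it is nonzero, is the delicate step---and accounts for the authors' inability to eliminate the constant-Betti-number hypothesis.
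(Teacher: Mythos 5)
The ``only if'' direction and the first stage of your converse argument (getting $\pdim_Q M<\infty$ from the change-of-rings spectral sequence and $\tor$-persistence of $Q$) are both correct, and match the roles played by Proposition~\ref{prop:descent} and \ref{ch:ssTor} in the paper. However, the second stage is where the proof has to be earned, and your sketch has a genuine gap there: you name the right ingredients (Gulliksen's operators, the support variety, a rational point) but do not actually identify a $Q$-module whose Betti numbers you can prove are constant, and you acknowledge you cannot. The idea of pulling a Poincar\'e series $\beta/(1-t)$ out of ``an appropriate syzygy or associated graded piece of a Shamash-style resolution'' does not work as stated: a Shamash/Eisenbud construction relates $Q$- and $Q'$-resolutions of the \emph{same} module $N$, whereas what is needed is a $Q$-module with near-constant $Q$-Betti numbers, and $N$ itself has $\pdim_Q N$ finite.

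What the paper does instead, and what your sketch is missing, is a two-step reduction followed by an explicit auxiliary module. First, algebraic closure of $k$ is used through Proposition~\ref{prop:criterion} (whose proof is via the support variety $V(Q,\bsf,M)$) to reduce the problem to showing $\pdim_{Q/(f)}M<\infty$ for \emph{every} $f\in I\smallsetminus\fq I$ --- a direct argument, not a contradiction via one bad rational point. Second, with $Q'=Q/(f)$ and $N$ a high $Q'$-syzygy of $M$ chosen so that $\Tor iQ'NN=0$ for $i\ge1$, one shows $\pdim_QN<\infty$ and $\depth_Q N=\depth Q-1$, forcing a two-term minimal $Q$-resolution $G\colon 0\to Q^b\to Q^b\to 0$. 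The module that carries the constant-Betti-number information is $L=N\otimes_{Q'}N$: the complex $F=G\otimes_QG$ has $\HH iF\cong L$ for $i=0,1$ and zero otherwise, and comparing the two-step filtration of $F$ against the $Q$-Betti numbers of $F_i$ yields $\beta^Q_i(L)=\beta^Q_{i+1}(L)$ for $i\ge2$. The hypothesis on $Q$ applied to a second syzygy of $L$ then gives $\pdim_QL\le1$; the equality $\cx_{Q'}L=2\cx_{Q'}N$ (from $\Tor{\ge1}{Q'}NN=0$) together with $\cx_{Q'}L\le\cx_QL+1$ forces $\cx_{Q'}N=0$, i.e.\ $\pdim_{Q'}N<\infty$. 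None of this --- the reduction to a single $f$, the rigid syzygy $N$, the tensor square $L$, or the Betti-number bookkeeping --- is present in your proposal, so it does not constitute a proof.
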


Another case, when the same conclusion holds, is given by Proposition \ref{prop:SegaFriendly}(1).

The proof of the theorem is presented in \ref{ch:PfrigidT}.  It utilizes a number of homological constructions,
which we proceed to review, starting with a spectral sequence.

  \begin{chunk}
    \label{ch:ssTor}
Let $R\twoheadleftarrow Q$ be a deformation and $\bsf=\{f_1,\dots,f_c\}$ a minimal generating
set of $I=\Ker(Q\twoheadrightarrow R)$.  By resolving $R$ over $Q$ by means of the Koszul
complex on $\bsf$ one gets isomorphisms $\Tor qQRN\cong N^{\binom cq}$ for every integer $q$, so
the standard change-of-rings spectral sequence with $\HE 2pq=\Tor pR{\Tor qQMR}N$ takes the form
  \begin{align}
    \label{eq:ssTor}
\HE 2pq=\Tor pRMN^{\binom cq}\implies\Tor{p+q}QMN
  \end{align}

It follows immediately that if $\Tor{}RMN$ is bounded, then so is $\Tor{}QMN$.
    \end{chunk}

Next we turn to the multiplicative structure of cohomology.  In this section it will be needed only for
$R$-modules, but for later use we describe its properties in the more general framework of $R$-complexes.

   \begin{chunk}
    \label{ch:AGP}
Let $U$ and $V$ be $R$-complexes.

Composition products $\circ$ turn $\Ext{}RUU$ and $\Ext{}RVV$ into $R$-algebras graded by cohomological degree, and endow $\Ext{}RUV$ with a structure of graded  bimodule on which $\Ext{}RUU$ acts on the right and $\Ext{}RVV$ on the left.

With $R\twoheadleftarrow Q$ and $\bsf$ as in \ref{ch:ssTor}, let $R[\bschi]$ be a polynomial ring in indeterminates
$\chi_1,\dots,\chi_c$ of cohomological degree $2$.  By \cite[2.7, p.\,700]{AS} there are homomorphisms
  \[
\Ext{}RVV\xla{\,\zeta_V\,} R[\bschi]\xra{\,\zeta_U\,}\Ext{}RUU
  \]
 of graded $R$-algebras, with images in the corresponding centers, satisfying
  \[
\zeta_V(\rho)\circ\xi=\xi\circ\zeta_U(\rho)
\quad\text{for all}\quad
\rho\in R[\bschi]
\quad\text{and}\quad
\xi\in\Ext{}RUV\,.
  \]
Thus, both maps give the same graded $R[\bschi]$-module structure on $\Ext{}RUV$.

The $R[\bschi]$-module $\Ext{}RUV$ is finitely generated if and only if $\Ext{}QUV$ is bounded;
see Avramov and Sun, \cite[5.1]{AS}.
    \end{chunk}

The last result is used to define cohomological support varieties; see Remark~\ref{ch:variety}. It is expedient to first recall  a measure of the asymptotic growth of minimal free resolutions, introduced in \cite{Av:vpd}; see also \cite[\S4.2]{Av:barca}.

\begin{chunk}
   \label{ch:les}
The \emph{complexity} of $M$ over $R$ is the number defined by the equality
  \[
\cx_RM=\inf\{d\in\BN\cup\{0\}\mid\beta^R_n(M)\le bn^{d-1} \text{ for some } b\in\BR \text{ and all }n\gg0\}.
  \]
In particular, $\cx_RM=0$ means that $M$ has finite projective dimension, while $\cx_RM=1$ means that
the sequence $(\beta^R_n(M))_{n\ges0}$ is bounded.
   \end{chunk}

 \begin{chunk}
    \label{ch:variety}
Let $R\twoheadleftarrow (Q,\fq,k)$ be a deformation and assume that $k$ is algebraically closed.

Set $I=\Ker(R\twoheadleftarrow Q)$, let $\bsf=\{f_1,\dots,f_c\}$ be a regular sequence generating $I$,
and write $\ov f$ for the image in $I/\fq I$ of $f\in I$.  Set $k[\bschi]=k\otimes_RR[\bschi]$, see
\ref{ch:AGP}, and identify $k[\bschi]$ and the ring of $k$-valued algebraic functions on $\ov I=I/\fq I$ by
mapping $1\otimes\chi_i$ to the $i$th coordinate function of the $k$-basis $\{\ov f_1,\dots,\ov f_c\}$ of
$\ov I$.  As in \cite[\S\,3]{Av:vpd}, let $V(Q,\bsf,M)$ denote the zero-set in $\ov I$ of the annihilator of
$\Ext{}RMk$ in $k[\bschi]$.

When $\pdim_QM$ is finite, \cite[3.12 and 3.11]{Av:vpd} yield the following equalities:
  \begin{align}
    \label{eq:variety1}
\cx_RM&=\dim V(Q,\bsf,M)
  \\
    \label{eq:variety2}
V(Q,\bsf,M)&=\{\ov f\mid f\in I\smallsetminus\fq I\text{ with }\pdim_{Q/(f)}M=\infty\}\cup\{0\}\,.
  \end{align}
Any $f$ in $I\smallsetminus\fq I$ can be extended to a minimal generating set for $I$. Hence such an
$f$ is not a zero divisor on $Q$, and the canonical map $Q/(f)\to R$ is a deformation.
    \end{chunk}

Contained in the preceding formulas is a criterion for finite projective dimension.

 \begin{proposition}
    \label{prop:criterion}
Let $R\twoheadleftarrow (Q,\fq,k)$ be a deformation such that $I=\Ker(Q\to R)$ is nonzero and $k$ is algebraically closed. When $M$ is a finitely generated $R$-module, $\pdim_{R}M$  is finite if and only if $\pdim_{Q/(f)}M$ is finite for each $f\in I\smallsetminus\fq I$.
    \end{proposition}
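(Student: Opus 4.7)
The plan is to prove the two directions separately, with the backward one invoking the support-variety machinery recalled in \ref{ch:variety}, and the forward one reduced to a standard change-of-rings argument via the observation made in \ref{ch:variety} that $Q/(f) \to R$ is itself a deformation.

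For the forward implication, I would fix $f\in I\smallsetminus\fq I$. Since $f$ can be extended to a minimal generating set of $I$, the image of the remaining generators forms a $Q/(f)$-regular sequence whose quotient is $R$, so $Q/(f)\to R$ is a deformation of length $c-1$. In particular the Koszul complex shows $\pdim_{Q/(f)}R=c-1$. Given a finite free resolution of $M$ over $R$, replacing each free $R$-module by its finite projective resolution over $Q/(f)$ and splicing (equivalently, invoking that a homomorphism of finite projective dimension preserves finite projective dimension of modules) yields $\pdim_{Q/(f)}M<\infty$.

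For the converse, assume $\pdim_{Q/(f)}M<\infty$ for every $f\in I\smallsetminus\fq I$. Since $I\neq0$, Nakayama's lemma guarantees at least one such $f$ exists. For any such $f$, the element $f$ is a non-zero-divisor on $Q$, so $\pdim_{Q}(Q/(f))=1$; splicing a finite $Q/(f)$-free resolution of $M$ with this length-one resolution of $Q/(f)$ over $Q$ gives $\pdim_QM<\infty$. This allows me to invoke the formulas \eqref{eq:variety1} and \eqref{eq:variety2} from \ref{ch:variety}: the hypothesis says that no $\ov f$ with $f\in I\smallsetminus\fq I$ lies in $V(Q,\bsf,M)$, so by \eqref{eq:variety2} we have $V(Q,\bsf,M)=\{0\}$. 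Then \eqref{eq:variety1} yields
\[
\cx_RM=\dim V(Q,\bsf,M)=0,
\]
and by \ref{ch:les} this means $\pdim_RM<\infty$.

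I do not expect any serious obstacle: most of the work is already encoded in the variety formulas of \ref{ch:variety}. The one point that deserves care is the bootstrap at the start of the reverse direction, where one must first produce the finiteness of $\pdim_QM$ needed to enter the hypotheses of \eqref{eq:variety1} and \eqml{eq:variety2}; this is done by exploiting finiteness over a single $Q/(f)$ together with $\pdim_Q Q/(f)=1$.
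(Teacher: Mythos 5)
Your proof is correct. The substantive (backward) direction is the same as the paper's: you bootstrap $\pdim_QM<\infty$ from finiteness over a single $Q/(f)$ and then invoke \eqref{eq:variety1} and \eqref{eq:variety2} to conclude $V(Q,\bsf,M)=\{0\}$ and hence $\cx_RM=0$. The forward direction is where you diverge. The paper runs the variety calculus in both directions, which requires the observation that $V(Q,\bsf,M)$ is a cone over an infinite field, so that $\dim V(Q,\bsf,M)=0$ forces $V(Q,\bsf,M)=\{0\}$; you instead give a direct change-of-rings argument using that $Q/(f)\to R$ is a deformation (already recorded at the end of \ref{ch:variety}), so $\pdim_{Q/(f)}R<\infty$ together with $\pdim_RM<\infty$ yields $\pdim_{Q/(f)}M<\infty$. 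This is a small but genuine streamlining: you avoid the cone argument entirely, since passing from $V=\{0\}$ to $\dim V=0$ is trivial and needs no hypothesis on $k$. The algebraically closed (or even infinite) hypothesis is thus used in your proof only implicitly, through the validity of \eqref{eq:variety1}--\eqref{eq:variety2} themselves. One cosmetic typo: the final reference should be \eqref{eq:variety2}.
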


    \begin{proof}
The validity  of either hypothesis implies $\pdim_QM<\infty$:  This follows from $\pdim_{R}M<\infty$ because
$\pdim_QR$ is finite and from $\pdim_{Q/(f)}M<\infty$ as $\pdim_Q Q/(f)$ is finite.
Thus it suffices to establish the equivalence when $\pdim_QM$ is finite.

By \eqref{eq:variety1},  $\pdim_{R}M<\infty$ is equivalent to $\dim V(Q,\bsf,M)=0$.  Since it is defined
by homogeneous equations, $V(Q,\bsf,M)$ is a cone in $\ov I$ with vertex at~$0$.
As $k$ is infinite, $\dim V(Q,\bsf,M)=0$ is equivalent to $V(Q,\bsf,M)=\{0\}$.  The latter holds if and
only if $\pdim_{Q/(f)}M$ is finite for each $f\in I\smallsetminus\fq$, due to \eqref{eq:variety2}.
    \end{proof}

  \begin{chunk}  \textit{Proof of Theorem} \ref{thm:rigidT}.
    \label{ch:PfrigidT}
When $R$ is Tor-persistent, so is $Q$, by Proposition~\ref{prop:descent}.

Suppose that $Q$ is Tor-persistent and $M$ is a finite $R$-module with $\Tor{}{R}{M}{M}$ finite. We
want to prove that $\pdim_{R}M$ is finite. By Proposition \ref{prop:criterion}, it suffices to show that
$\pdim_{Q/(f)}M$ is finite for every $f\in I\smallsetminus\fq I$. Fix one such $f$ and set $Q'=Q/(f)$.
Recall that $f$ is $Q$-regular and the map $R\twoheadleftarrow Q'$ is a deformation.

By \ref{ch:ssTor}, one has  $\Tor i{Q'}MM=0$ for $i\gg 0$. When $N$ is a sufficiently high syzygy module
of $M$ over $Q'$, standard degree-shifting isomorphisms yield
  \begin{equation}
    \label{eq:rigidT2}
\Tor i{Q'}{N}{N}=0
\quad\text{for}\quad
i\ge1\,.
  \end{equation}
Then $\Tor {}{Q}{N}{N}$ is also bounded, again by \ref{ch:ssTor}, so $\pdim_QN$ is finite because $Q$ is
$\tor$-persistent.  The choice of $N$ entails $\depth_{Q'}N=\depth Q'$.  By the Auslander-Buchsbaum
Equality, $N$ has a minimal $Q$-free resolution of the form
  \begin{equation}
    \label{eq:rigidT1}
G=\qquad0\to Q^b\to Q^b\to 0
  \end{equation}

To finish the proof, we show that $\pdim_{Q'}N$ is finite.  Set $F=G\otimes_QG$ and $L=N\otimes_{Q'}N$.
In view of \eqref{eq:rigidT2}, the spectral sequence \eqref{eq:ssTor} for $Q\to Q'$ yields
 \begin{equation}
    \label{eq:rigidT3}
\HH iF\cong \Tor i{Q}{N}{N}\cong
\begin{cases}
 \Tor 0{Q'}{N}{N}\cong L &\text{for }i=0,1\,;
  \\
0 &\text{otherwise}\,.
\end{cases}
  \end{equation}

Let $F'$ be the subcomplex of $F$ with $F'_i=F_i$ for $i\ge 2$, $F'_1=\Ker\partial_1$, and $F'_i=0$
for $i\le0$.  From \eqref{eq:rigidT3} we get quasi-isomorphisms $F'\simeq\shift L$ and $F/F'\simeq L$,
so the exact sequence $0\to F'\to F\to F/F'\to 0$ of complexes yields an exact sequence
 \[
\cdots\to\Tor {i+2}Q{F}k\to \Tor {i+2}Q{L}k\to \Tor {i}Q{L}k\to \Tor {i+1}Q{F}k\to\cdots
  \]
Set $T_i=\Tor iQ{L}k$.  Since $\Tor iQ{F}k\cong F_i\otimes_Qk$ holds for each $i$, the resolution \eqref{eq:rigidT1} and the exact sequence above   yield exact sequences
\begin{align*}
0\to T_{i+2}&\to T_{i}\to 0 \quad\text{for}\quad i\ge2 \\
0\to T_3\to T_1\to k^{b^2}\to T_2&\to T_0\to k^{2b^2}\to T_1\to 0\to k^{b^2}\to T_0\to0
\end{align*}
of vector spaces.  As a result, we get equalities
\begin{align*}
& \beta^Q_i(L)=\beta^Q_{i+2}(L) \quad\text{for $i\ge2$}\\
& \beta^Q_2(L)=\beta^Q_{3}(L)
\end{align*}
We conclude that $\beta^Q_i(L)=\beta^Q_{i+1}(L)$ holds for $i\ge2$.

In view of the hypothesis on $Q$, this implies $\pdim_QL\le1$.

If $G'$ is a minimal $Q'$-free resolution of $N$, then $G'\otimes_{Q'}G'$ is one of $L$, by \eqref{eq:rigidT2}.
This gives the first equality in the next string, and \cite[4.2.5(4)]{Av:barca} yields the inequality:
  \[
2\cx_{Q'}N=\cx_{Q'}L\le\cx_QL+1=1
  \]
As a consequence, we get $\cx_{Q'}N=0$, so $\pdim_{Q'}N$ is finite, as desired.
    \qed
  \end{chunk}

  \section{Tor-friendly rings}
    \label{S:Tor-friendly-rings}

In this section $(R,\fm,k)$ denotes a local ring, and $M$ and $N$ are finite $R$-modules.

Outside of the class of locally complete intersection rings, $\tor$-persistence has been
difficult to verify.  The following stronger property is easier to work with.

\begin{chunk}
  \label{ch:friendly}
We say that $R$ is \emph{$\tor$-friendly} if for every pair $(M,N)$ of finite $R$-modules $\Tor{}RMN$
is bounded  only if $\pdim_RM$ or $\pdim_RN$ is finite.
  \end{chunk}

We record some formal properties of $\tor$-friendliness.  Proofs for the following alternative characterization
and descent result are omitted, as they parallel those of the analogous results for
$\tor$-persistence;  see Proposition~\ref{pr:persistent} and Proposition~\ref{prop:descent}.

  \begin{proposition}
  \label{prop:friendly}
The following conditions on a local ring $R$ are equivalent.
  \begin{enumerate}[\quad\rm(i)]
 \item
The ring $R$ is $\tor$-friendly.
 \item
If $U$ and $V$ are $R$-complexes, such that the $R$-modules $\hh U$ and $\hh V$ are finite and $\Tor{}RUV$ is bounded, then
$U$ or $V$ is perfect.
\item
If $U$ and $V$ are $R$-complexes, such that the $R$-modules $\hh U$ and $\hh V$ are of finite length and
$\Tor{}RUV$ is bounded, then $U$ or $V$ is perfect. \qed
  \end{enumerate}
  \end{proposition}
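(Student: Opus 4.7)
The plan is to mimic the proof of Proposition~\ref{pr:persistent} throughout, treating pairs $(U,V)$ of complexes in place of the single complex there. The implications (ii)$\implies$(i) and (ii)$\implies$(iii) are immediate: the latter because finite length modules are finite, the former by taking $U=M$ and $V=N$ for finite $R$-modules $M$ and $N$ with $\Tor{}RMN$ bounded. The substance is therefore in proving (i)$\implies$(ii) and (iii)$\implies$(ii), and in each case the hypothesis has the right either-or shape to match the desired conclusion.

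For (i)$\implies$(ii), I would first replace $U$ and $V$ by semiprojective resolutions in which each term is a finite free $R$-module, and set $s=\sup\hh U$ and $t=\sup\hh V$. The canonical exact sequences
\[
0\to U_{<s}\to U\to U_{\ges s}\to 0 \quad\text{and}\quad 0\to V_{<t}\to V\to V_{\ges t}\to 0
\]
display $U$ and $V$ as extensions of the perfect complexes $U_{<s}$, $V_{<t}$ by $U_{\ges s}$, $V_{\ges t}$, and the shifts $\shift^{-s}U_{\ges s}$ and $\shift^{-t}V_{\ges t}$ are free resolutions of the finite modules $M=\HH s{U_{\ges s}}$ and $N=\HH t{V_{\ges t}}$. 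Splicing the long exact $\Tor{}R--$ sequences coming from the two displayed short exact sequences, and using that $\Tor iR{U_{<s}}{V}$ and $\Tor iR{U_{\ges s}}{V_{<t}}$ vanish for $i$ sufficiently large (since $U_{<s}$ and $V_{<t}$ are perfect), one obtains isomorphisms
\[
\Tor iRMN\cong \Tor{i+s+t}RUV \qquad\text{for } i\gg 0.
\]
The hypothesis that $\Tor{}RUV$ is bounded gives boundedness of $\Tor{}RMN$, and (i) forces $\pdim_RM<\infty$ or $\pdim_RN<\infty$, whence $U_{\ges s}$ or $V_{\ges t}$ is perfect. Combined with perfection of the truncated pieces, this yields perfection of $U$ or of $V$.

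For (iii)$\implies$(ii), I would adapt the Koszul trick used in Proposition~\ref{pr:persistent}. Let $K$ be the Koszul complex on a finite generating set of $\fm$, and fix semiprojective representatives of $U$ and $V$. Then $K\otimes_R U$ and $K\otimes_R V$ are semiprojective with homology of finite length. Associativity and commutativity of the tensor product give
\[
\Tor{}R{K\otimes_R U}{K\otimes_R V}\cong\hh{(K\otimes_R K)\otimes_R(U\otimes_R V)},
\]
which is bounded because $K\otimes_R K$ is a bounded complex of free modules and $\hh{U\otimes_R V}\cong\Tor{}RUV$ is bounded. By (iii), at least one of $K\otimes_R U$ and $K\otimes_R V$ is perfect. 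The K\"unneth isomorphism
\[
\hh{k\otimes_R(K\otimes_R U)}\cong\hh{k\otimes_R K}\otimes_k\hh{k\otimes_R U},
\]
combined with the nonvanishing of $\hh{k\otimes_R K}$, then shows that whichever of $K\otimes_R U$, $K\otimes_R V$ is perfect forces perfection of the corresponding $U$ or $V$ by \ref{ch:perfect}.

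The only real obstacle is bookkeeping: unlike the persistence argument, where the conclusion always concerns the single complex at hand, here one has no a priori control over which of $U$, $V$ inherits perfection at each step. However, the argument is entirely symmetric in the two arguments, and the either-or form of hypotheses (i) and (iii) propagates the either-or conclusion cleanly, so this causes no genuine difficulty.
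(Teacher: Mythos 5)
Your proof is correct and takes exactly the route the paper intends: the authors omit the proof of this proposition, stating that it parallels that of Proposition~\ref{pr:persistent}, and your argument is precisely the two-variable adaptation of that proof, with the truncation at $\sup\hh U$ and $\sup\hh V$ reducing (i)$\implies$(ii) to the module case and the Koszul/K\"unneth argument handling (iii)$\implies$(ii).
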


\begin{proposition}
\label{prop:descent-friendly}
Let $\varphi\colon R\to S$ be a local homomorphism of local rings.

If $\varphi$ is of finite flat dimension and $S$ is $\tor$-friendly, then so is $R$.
   \qed
\end{proposition}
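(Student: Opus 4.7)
The plan is to adapt the proof of Proposition~\ref{prop:descent}, but now tracking a pair of modules simultaneously. Given finite $R$-modules $M$ and $N$ with $\Tor{}RMN$ bounded, I would choose semiprojective resolutions $P\xra{\simeq} M$ and $P'\xra{\simeq} N$ in which each $P_i$ and $P'_i$ is a finite free $R$-module. Extending scalars gives semiprojective $S$-complexes $U=S\otimes_RP$ and $V=S\otimes_RP'$, and the aim is to apply Proposition~\ref{prop:friendly} to the pair $(U,V)$ over $S$ and then descend the resulting perfection back to~$R$.

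First I would verify the hypotheses of Proposition~\ref{prop:friendly}(ii) for $(U,V)$. Semiprojectivity and associativity of tensor products give
\[
\Tor{}SUV \;\cong\; \hh{(S\otimes_R P)\otimes_S(S\otimes_R P')} \;\cong\; \hh{S\otimes_R(P\otimes_R P')},
\]
and since $\hh{P\otimes_R P'}\cong\Tor{}RMN$ is bounded and $\fdim_RS$ is finite, $\Tor{}SUV$ is bounded. The same finite-flat-dimension argument shows that $\hh U\cong\Tor{}RSM$ and $\hh V\cong\Tor{}RSN$ are bounded; since each homology module is finitely generated over $S$, both $\hh U$ and $\hh V$ are finite as graded $S$-modules. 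Proposition~\ref{prop:friendly} then forces $U$ or $V$ to be perfect over $S$; renaming if necessary, assume $U$ is perfect.

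What remains is to descend perfection of $U$ to perfection of $P$, which is where locality of $\varphi$ is essential. Writing $k=R/\fm$ and $\ell=S/\fn$, the inclusion $\fm S\subseteq\fn$ lets $R$ act on $\ell$ through $k$. The associativity isomorphisms
\[
\ell\otimes_S U \;=\; \ell\otimes_S(S\otimes_R P) \;\cong\; \ell\otimes_R P \;\cong\; \ell\otimes_k(k\otimes_R P),
\]
combined with flatness of $\ell$ over $k$, yield $\hh{\ell\otimes_S U}\cong\ell\otimes_k\Tor{}RkM$. Perfection of $U$ bounds the left-hand side, and since $\ell\ne0$ this bounds $\Tor{}RkM$, so $\pdim_RM<\infty$, as desired. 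The main subtlety I anticipate lies in the middle step: guaranteeing that $\hh U$ and $\hh V$ are genuinely finite as graded $S$-modules, not merely degreewise finite, so that Proposition~\ref{prop:friendly} applies. This is precisely what the finite flat dimension assumption buys us, and it parallels the role played by $\fdim_R S<\infty$ in Proposition~\ref{prop:descent}.
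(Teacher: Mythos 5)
Your proof is correct and follows exactly the route the paper intends: it declares that the proof is omitted because it parallels Proposition~\ref{pr:persistent} and Proposition~\ref{prop:descent}, and your argument is precisely that adaptation, with a second module $N$ carried along. The only simplification available (and implicitly present in your argument) is that since $\varphi$ is a local homomorphism of local rings, the surjectivity hypothesis on $\mspec S\to\mspec R$ in Proposition~\ref{prop:descent} holds automatically and only a single maximal ideal needs checking.
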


  \begin{remark}
  \label{rem:unfriendly}
When $R$ is singular, $\tor$-friendliness need not ascend from $R$ to $S$, even when $\varphi$ is surjective
with kernel generated by an $R$-regular element: see Proposition~\ref{prop:SegaFriendly}.  This should be
compared to Theorems~\ref{thm:rigidT} and \ref{thm:mainTor}.
    \end{remark}

\begin{proposition}
\label{ch:friendly-complete}
Let $I\subseteq\fm$ be an ideal of $R$, and let $\wh R$ denote the $I$-adic completion of $R$. The ring $R$ is $\tor$-friendly if and only if so is $\wh R$.
  \qed
     \end{proposition}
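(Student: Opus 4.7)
The plan is to mirror the proof of Proposition~\ref{ch:persistent-complete} verbatim, replacing uses of Proposition~\ref{pr:persistent} with Proposition~\ref{prop:friendly}, and exploiting the fact that condition (iii) there lets one confine attention to complexes whose homology has finite length. The easy direction is free: the completion map $R\to\wh R$ is a faithfully flat local homomorphism, so Proposition~\ref{prop:descent-friendly} immediately gives that $\tor$-friendliness of $\wh R$ descends to $R$.

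For the converse, I would invoke Proposition~\ref{prop:friendly}(iii): it suffices to show that if $U$ and $V$ are $\wh R$-complexes with $\hh U$ and $\hh V$ of finite length and $\Tor{}{\wh R}UV$ bounded, then $U$ or $V$ is perfect as an $\wh R$-complex. The key observation, already used in Proposition~\ref{ch:persistent-complete}, is that finite-length $\wh R$-modules have the same length when restricted to $R$ and that for any such module $L$ the canonical map $\wh R\otimes_R L\to L$ is bijective. Applied degreewise through the flatness of $\wh R$ over $R$, this gives quasi-isomorphisms $\wh R\otimes_R U\simeq U$ and $\wh R\otimes_R V\simeq V$, and then the chain
\[
\wh R\otimes_R\Tor{}RUV \xra{\ \cong\ } \Tor{}{\wh R}{\wh R\otimes_R U}{\wh R\otimes_R V} \xra{\ \cong\ } \Tor{}{\wh R}UV
\]
together with faithful flatness of $R\to\wh R$ shows that $\Tor{}RUV$ is bounded.

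Since $\hh U$ and $\hh V$ are of finite length over $R$ as well, $\tor$-friendliness of $R$ yields (after possibly interchanging $U$ and $V$) that one of them, say $U$, is perfect as an $R$-complex. To finish I would pick a quasi-isomorphism $U\simeq P$ with $P$ a bounded complex of finite projective $R$-modules; then $\wh R\otimes_R P$ is a bounded complex of finite projective $\wh R$-modules, flatness gives $\wh R\otimes_R P\simeq\wh R\otimes_R U$, and composing with the quasi-isomorphism $\wh R\otimes_R U\simeq U$ presents $U$ as perfect over $\wh R$.

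The argument is essentially a routine transcription of the persistence case; the only point that requires mild care is choosing the finite-length variant (iii) of Proposition~\ref{prop:friendly} instead of the finite-homology variant (ii), so that the completion map has the bijectivity properties needed to transfer the bounded Tor hypothesis from $\wh R$ back to $R$. No serious obstacle is anticipated.
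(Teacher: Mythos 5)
Your argument correctly transcribes the proof of Proposition~\ref{ch:persistent-complete} to the two-complex setting, which is exactly what the paper intends by omitting the proof and stating that these results parallel their $\tor$-persistence analogues. Your finishing step---deducing perfection over $\wh R$ directly from the chain $\wh R\otimes_R P \simeq \wh R\otimes_R U \simeq U$---is a mild streamlining of the paper's route via checking that $\Tor{}{\wh R}{\wh R/\fn}{U}$ is bounded, but both are routine and valid.
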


For the proof of Proposition~\ref{prop:tensor}, we need a general result of independent interest.

\begin{lemma}
  \label{lem:tensor}
Let $P\to Q$ and $P\to Q'$ be homomorphisms of rings.

If $\Tor iPQ{Q'}=0$ for $i\ge1$ and $G$ and $G'$ are bounded below complexes of flat modules
over $Q$ and $Q'$, respectively, then there are $(Q\otimes_PQ')$-linear isomorphisms
  \begin{equation}
    \label{eq:tensor1}
\HH n{G\otimes_{P}G'}\cong\Tor nP{G}{G'}\quad \text{for each}\quad n\ge 1\,.
  \end{equation}
    \end{lemma}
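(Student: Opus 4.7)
My plan is to show that the canonical comparison map
\[
G\otimes_P^{\mathbf L} G' \lra G\otimes_P G'
\]
is a quasi-isomorphism; taking homology then yields \eqref{eq:tensor1} (in fact for all $n\ge 0$, not just $n\ge 1$). The $(Q\otimes_P Q')$-linearity will be automatic since every map in the argument is built canonically from bimodule tensor products.

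I would begin by fixing bounded-below flat $P$-resolutions $F\to Q$ and $H'\to G'$. By definition $\Tor nP{G}{G'}=\HH n{G\otimes_P H'}$, so the statement reduces to showing that the chain map $G\otimes_P H'\to G\otimes_P G'$ induced by $H'\to G'$ is a quasi-isomorphism. Using $G\otimes_P(-)=G\otimes_Q(Q\otimes_P(-))$ and the fact that $G$ is a bounded-below complex of flat $Q$-modules (so $G\otimes_Q(-)$ preserves quasi-isomorphisms of bounded-below $Q$-complexes), it suffices to prove that $Q\otimes_P H'\to Q\otimes_P G'$ is a quasi-isomorphism of $Q$-complexes.

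For this I would consider the commutative square of $Q$-complexes
\[
\begin{array}{ccc}
F\otimes_P H' & \lra & Q\otimes_P H'\\
\downarrow & & \downarrow\\
F\otimes_P G' & \lra & Q\otimes_P G'
\end{array}
\]
and show that three of its arrows are quasi-isomorphisms, which forces the fourth to be one. The top arrow is a quasi-isomorphism because each $H'_q$ is flat over $P$: tensoring the quasi-isomorphism $F\to Q$ with $H'_q$ over $P$ yields a quasi-isomorphism for every $q$, and passing to totalizations gives the claim. The left arrow is a quasi-isomorphism for the parallel reason that each $F_p$ is flat over $P$. The delicate case is the bottom arrow; it is a quasi-isomorphism if and only if $\Tor iP{Q}{G'_q}=0$ for every $i\ge 1$ and every $q$.

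The main obstacle is this last vanishing, and it is where the hypothesis $\Tor iP{Q}{Q'}=0$ for $i\ge 1$ is used. I would establish it via the natural isomorphism $F\otimes_P G'_q \cong (F\otimes_P Q')\otimes_{Q'} G'_q$: since $G'_q$ is flat over $Q'$, tensoring over $Q'$ commutes with taking homology, so
\[
\hh{F\otimes_P G'_q} \cong \hh{F\otimes_P Q'}\otimes_{Q'} G'_q,
\]
and by hypothesis $\hh{F\otimes_P Q'}$ is concentrated in degree zero with value $Q\otimes_P Q'$. Thus $\hh{F\otimes_P G'_q}$ equals $Q\otimes_P G'_q$ in degree zero and vanishes in positive degrees, making the bottom arrow a quasi-isomorphism and completing the proof.
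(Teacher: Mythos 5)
Your proof is correct and establishes even more than the statement asks (the isomorphism in all degrees, not just $n\ge1$), but it follows a genuinely different route than the paper. The paper resolves $G'$ by a free $P$-complex $F'$, passes to the cone $C$, and then runs a hands-on d\'evissage: truncating $G$ to a bounded complex, peeling off one degree at a time to reduce to $G$ a single flat $Q$-module, using a double-complex spectral sequence to reduce $G'$ to a single flat $Q'$-module, and finally invoking Govorov--Lazard to reduce both to finite free modules, where the statement is checked directly. Your argument instead packages the d\'evissage into the single black-box fact that a bounded-below complex of flat modules is $K$-flat: you use this for $G$ over $Q$ (to reduce to $G=Q$), for $F$ and $H'$ over $P$ (to make the top and left arrows of your square quasi-isomorphisms), and you handle the bottom arrow by the neat observation that flatness of $G'_q$ over $Q'$ lets you pull the homology of $F\otimes_PQ'$ out through $\otimes_{Q'}G'_q$, at which point the hypothesis $\Tor iPQ{Q'}=0$ for $i\ge1$ does all the work. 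The trade-off is standard: the paper's proof is self-contained (essentially re-deriving the needed $K$-flatness via Govorov--Lazard along the way), whereas yours is shorter and more conceptual at the cost of quoting $K$-flatness of bounded-below flat complexes. Two minor stylistic remarks: your ``if and only if'' for the bottom arrow is an overclaim --- the ``only if'' direction is neither obvious nor needed, only the ``if'' (which relies implicitly on $F$ and $G'$ being bounded below so that the filtration spectral sequence converges); and the $(Q\otimes_PQ')$-linearity is most cleanly seen by noting that the canonical map $G\otimes^{\mathbf L}_PG'\to G\otimes_PG'$ lives in the derived category of $Q\otimes_PQ'$-modules, though the paper is equally terse on this point.
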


\begin{proof}
Let $F'\to G'$ be a free resolution of $G'$ over $P$. It suffices to prove that the induced morphism
$G\otimes_{P}F' \to G\otimes_{P}G'$ is a quasi-isomorphism; equivalently, that the mapping
cone $C$ of $F'\to G'$ satisfies
  \begin{equation}
    \label{eq:tensor2}
\HH n{G\otimes_{P}C}=0\quad \text{for each}\quad n\in\BZ\,.
  \end{equation}

We proceed by d\'evissage.  Since $F'$ and $G'$ are bounded below, there exists an integer $s$ such
that $C_{j}=0$ for $j\le s$, so there are isomorphisms
\[
\HH n{G\otimes_{P}C}\cong \HH n{ G_{\les n-s+1}\otimes_{P}C}\,.
\]
Thus, replacing $G$ by $G_{\les n-s+1}$ we may assume that $G$ is a bounded complex of flat
$Q$-modules.  Set $b=\min \{j\mid G_{j}\ne 0\}$.  The sequence of complexes of $Q$-modules
\[
0\lra G_{b}\lra G \lra G_{\ges b+1}\lra 0
\]
is split-exact as a sequence of graded modules, so the induced sequence of complexes
\[
0\lra G_{b}\otimes_{P}C\lra G\otimes_{P}C \lra G_{\ges b+1}\otimes_{P}C \lra 0
\]
is exact.  Induction on the number of nonzero terms in $G$ shows that it suffices to prove
\eqref{eq:tensor2} when $G$ is a flat $Q$-module.  We assume that for the rest of the proof.

Let $F\to G$ be a free resolution of $G$ over $P$.  We have standard spectral sequences
\[
{}'E^{2}_{i,j} = \HH j{\HH i{F\otimes_PC_j}} \underset{i}{\Longrightarrow} \HH{i+j}{F\otimes_PC}
 \underset{j}{\Longleftarrow} \HH i{\HH j{F_i\otimes_PC}} = {}''E^{2}_{j,i}
\]
We have $\HH j{F_i\otimes_PC}\cong F_i\otimes_P\HH j{C}=0$ for all $(j,i)$, so ${}''E^{r}_{j,i}$
yields $\HH{i}{F\otimes_PC}=0$.  As $\HH i{F\otimes_PC_j}\cong\Tor iPG{C_j}$
holds for all $(i,j)$, we get a spectral sequence
  \[
E^{2}_{i,j} = \HH j{\Tor iPG{C_j}} \underset{i}{\Longrightarrow} 0
  \quad\text{with}\quad
E^{2}_{0,j} = \HH j{G\otimes_PC}\,.
  \]
The $Q$-module $F'_j$
is free, so for every integer $j$ and each $n\ge 1$ we have isomorphisms
\[
\Tor nPG{C_{j}} = \Tor nPG{F'_{j-1}\oplus G'_{j}} \cong \Tor nPG{G'_{j}}\,.
\]
Therefore it suffices to prove \eqref{eq:tensor1} when $G'$ is a flat $Q'$-module.

By the Govorov-Lazard Theorem, $G$ and $G'$ are filtered colimits of families $\bsG=\{G_{i}\}_{i\in I}$
and $\bsG'=\{G'_{i'}\}_{i'\in I'}$ of finite free modules over $Q$ and $Q'$, respectively, so
  \[
\Tor nPG{G'}
\cong\Tor nP{\underset{i\in I}{\colim}\bsG'}{\underset{i'\in I'}{\colim}\bsG}
\cong\underset{(i,i')\in I\times I'}{\colim}\Tor nP{G_i}{G'_{i'}}\,.
  \]

Formula \eqref{eq:tensor1} evidently holds for $G\cong Q^r$ and $G'\cong Q'^{r'}$.
  \end{proof}

Here is a first application of Proposition~\ref{prop:friendly}.

    \begin{proposition}
  \label{prop:tensor}
Let $Q$ and ${Q'}$ be singular residue rings of a regular local ring $P$.

If $\Tor iPQ{Q'}=0$ for $i\ge1$, then the ring $R=Q\otimes_P{Q'}$ is not $\tor$-friendly.
    \end{proposition}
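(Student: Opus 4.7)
The plan is to produce finite $R$-modules $M$ and $N$ with $\pdim_R M = \pdim_R N = \infty$ but with $\Tor{}{R}{M}{N}$ bounded, so that the non-friendliness of $R$ follows from Proposition~\ref{prop:friendly}.  Since $Q$ is singular one has $\pdim_Q k=\infty$, whence $\pdim_Q\Omega^m_Q k=\infty$ for every $m\ges 0$; symmetrically over $Q'$.  I would fix an integer $m\ges\max\{\pdim_P Q,\pdim_P Q'\}$, which is finite because $P$ is regular, and set $X=\Omega^m_Q k$ and $Y=\Omega^m_{Q'}k$.  A standard degree-shifting argument combined with the hypothesis $\Tor iP{Q}{Q'}=0$ for $i\ges 1$ then yields $\Tor iPX{Q'}\cong\Tor{i+m}Pk{Q'}=0$ and $\Tor iPQY\cong\Tor{i+m}PQk=0$ for $i\ges 1$.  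The candidate modules are $M=X\otimes_QR$ and $N=R\otimes_{Q'}Y$, each finite over $R$ via the surjections $Q\twoheadrightarrow R$ and $Q'\twoheadrightarrow R$.

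The core computation will be the isomorphism $\Tor nRMN\cong\Tor nPXY$ for $n\ges 1$, from which boundedness of $\Tor{}{R}{M}{N}$ follows because $P$ is regular.  I would pick a $Q$-free resolution $G\to X$ and a $Q'$-free resolution $G'\to Y$.  Applying $Q\otimes_P(-)$ to a $P$-free resolution of $Q'$ produces a $Q$-free resolution of $R$, from which one gets $\Tor iQX{R}\cong\Tor iPX{Q'}$; by our choice of $m$ this vanishes for $i\ges 1$, so $G\otimes_QR$ is an $R$-flat resolution of $M$, and symmetrically $R\otimes_{Q'}G'$ is an $R$-flat resolution of $N$.  Associativity of tensor products then gives the crucial identification
\[
G\otimes_PG'\;\cong\;(G\otimes_QR)\otimes_R(R\otimes_{Q'}G')
\]
of $R$-complexes, whose homology computes $\Tor{}{R}{M}{N}$.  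Lemma~\ref{lem:tensor} identifies the homology of the left hand side in positive degrees with $\Tor nPXY$, which vanishes for $n>\dim P$.

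To finish, in order to see $\pdim_RM=\infty$, the same $R$-flat resolution gives
\[
\Tor nRMk\cong\HH n{(G\otimes_QR)\otimes_Rk}=\HH n{G\otimes_Qk}=\Tor nQXk,
\]
which is unbounded because $\pdim_QX=\infty$; symmetrically $\pdim_RN=\infty$.  The main technical point I expect will be the double reading of $G\otimes_PG'$---both as an $R$-complex computing $\Tor{}{R}{M}{N}$ (via resolutions of $M$ and $N$) and as a $P$-module tensor whose positive-degree homology is $\Tor nPXY$ by Lemma~\ref{lem:tensor}; reconciling these two viewpoints is precisely where the hypothesis $\Tor iP{Q}{Q'}=0$ for $i\ges 1$ pulls its weight.
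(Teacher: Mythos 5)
Your proof is correct and rests on the same two pillars as the paper's: the identification $G\otimes_P G'\cong(G\otimes_Q R)\otimes_R(R\otimes_{Q'}G')$ coming from $R=Q\otimes_P Q'$, and Lemma~\ref{lem:tensor} to convert homology of the $P$-tensor into vanishing Tor. The difference is one of packaging. The paper takes arbitrary finite $Q$- and $Q'$-modules $L,L'$ of infinite projective dimension, forms their minimal free resolutions $G,G'$, and works directly with the $R$-complexes $G\otimes_P Q'$ and $Q\otimes_P G'$: these are minimal and unbounded (hence not perfect) and have bounded finite homology, so Proposition~\ref{prop:friendly}(ii) applies without any need for them to be resolutions of specific modules. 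You instead pre-process by replacing $k$ with high syzygies $X=\Omega^m_Q k$, $Y=\Omega^m_{Q'}k$; the extra Tor vanishing $\Tor iQX{R}=0=\Tor i{Q'}RY$ for $i\ge1$ (obtained from $\Tor iPQ{Q'}=0$ by degree shifting, your choice of $m$, and regularity of $P$) makes $G\otimes_Q R$ and $R\otimes_{Q'}G'$ into genuine $R$-free resolutions of the modules $M=X\otimes_Q R$ and $N=R\otimes_{Q'}Y$, so you can verify non-friendliness at the level of modules, i.e.\ condition (i) of Proposition~\ref{prop:friendly}, rather than (ii). Your route is a bit longer but produces explicit module-level witnesses; the paper's is shorter because the complex formulation of friendliness absorbs the bookkeeping that your syzygy shift does by hand.
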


   \begin{proof}
Since $Q$ and ${Q'}$ are singular, they have finite modules of infinite projective dimension.  Let $L$ and $L'$ be such modules, and $G$ and $G'$ be their minimal free resolutions over $Q$ and ${Q'}$, respectively.  Since $G$ and $G'$ are minimal, so are the complexes of finite free $R$-modules $G\otimes_{P}Q'$ and $Q\otimes_{P}G'$, and hence of infinite projective dimension over $R$.
From Lemma~\ref{lem:tensor}, applied with $G'=Q'$, and the regularity of $P$ it follows that
\[
\HH i{G\otimes_{P}Q'}\cong \Tor iPG{Q'}\cong \Tor iPL{Q'}=0\quad\text{for $i\gg 0$.}
\]
By symmetry, $\hh{Q\otimes_{P}G'}$ is bounded as well. Consider the following chain of isomorphisms, where the first two are standard.
\begin{align*}
\Tor iR{G\otimes_{P}Q'}{Q\otimes_{P}G'}
	& \cong \HH i{(G\otimes_{P}Q') \otimes_{(Q\otimes_{P}Q')}(Q\otimes_{P}G')} \\
	&  \cong \HH i{G\otimes_{P}G'}\\
	& \cong \Tor iPL{L'} \\
	&\cong 0 \quad \text{for $i\gg 0$.}
\end{align*}
The third isomorphism holds by Lemma~\ref{lem:tensor}, and the last one because $P$ is regular.  It remains to apply Proposition~\ref{prop:friendly} to conclude that $R$ is not $\tor$-friendly.
   \end{proof}

In part (2) of the next result, the ``if'' direction is a theorem of Huneke and Wiegand \cite[1.9]{HW}; a different
proof was given by Miller \cite[1.1]{Mi}.  The converse assertion is due to \c Sega~\cite[4.2]{Se1}; the proof given below is different.

   \begin{proposition}
  \label{prop:SegaFriendly}
Assume $\wh R$ is isomorphic to $Q/(f)$, where $(Q,\fq,k)$ is a local ring and $f$ is a $Q$-regular element.
  \begin{enumerate}[\rm(1)]
    \item
When $f$ does not lie in $\fq^2$, the ring $R$ is $\tor$-friendly (respectively, $\tor$-persistent) if and only if $Q$ is.
    \item
When $f$ lies in $\fq^2$, the ring $R$ is $\tor$-friendly if and only if $Q$ is regular.
    \end{enumerate}
      \end{proposition}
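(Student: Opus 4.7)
The proof splits along the two parts of the statement.

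For Part~(1), one implication is immediate: since $f$ is $Q$-regular, the surjection $Q \twoheadrightarrow R$ has finite flat dimension, and Propositions~\ref{prop:descent-friendly} and~\ref{prop:descent} give that $R$ being $\tor$-friendly (resp.\ $\tor$-persistent) implies $Q$ is. This direction uses only the regularity of $f$ and is insensitive to whether $f \in \fq^{2}$.

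For the converse in Part~(1), the hypothesis $f \in \fq \setminus \fq^{2}$ is harnessed via the Poincar\'e-series identity
\[
P^{Q}_{M}(t) \;=\; (1+t) \cdot P^{R}_{M}(t)
\]
valid for every finite $R$-module $M$; its immediate consequence is the equivalence $\pdim_{R} M < \infty \iff \pdim_{Q} M < \infty$. I would establish the identity via the change-of-rings spectral sequence~\eqref{eq:ssTor} for $Q \twoheadrightarrow R$: since $f M = 0$, the $E^{2}$-page computing $\Tor{}QMk$ concentrates in rows $q = 0$ and $q = 1$, each equal to $\Tor pRMk$, so the only potentially nonzero differential is $d_{2}\colon \Tor pRMk \to \Tor{p-2}RMk$; the hypothesis $f \notin \fq^{2}$ forces $d_{2} = 0$, and $E_{2} = E_{\infty}$ delivers the identity. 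The converse implication is then routine: given finite $R$-modules $M, N$ with $\Tor{}RMN$ bounded, \eqref{eq:ssTor} forces $\Tor{}QMN$ bounded, so $\pdim_{Q} M$ or $\pdim_{Q} N$ is finite by $\tor$-friendliness of $Q$, and the identity transfers this to~$R$; the $\tor$-persistent case is the diagonal $M = N$.

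For Part~(2), the ``if'' direction is the Huneke-Wiegand theorem~\cite[1.9]{HW}: when $Q$ is regular, $R$ is a hypersurface and $\tor$-friendliness follows from Tor-rigidity over hypersurfaces. For the ``only if'' I would argue the contrapositive, following \c{S}ega~\cite[4.2]{Se1}: if $Q$ is singular, then there is a finite $Q$-module $L$ with $\pdim_{Q} L = \infty$, which I replace by a sufficiently high $Q$-syzygy $N$, ensuring $N$ is maximal Cohen-Macaulay over $Q$ and $f$ is $N$-regular. Then $\bar N := N/fN$ is a finite $R$-module, and combining the short exact sequence $0 \to N \xra{f} N \to \bar N \to 0$ with the spectral sequence~\eqref{eq:ssTor} yields $\Tor iR{\bar N}{\bar N} = 0$ for $i \gg 0$; the hypothesis $f \in \fq^{2}$ obstructs the $(1+t)$-identity of Part~(1) and instead produces hypersurface-like persistence of the $R$-Betti numbers, giving $\pdim_{R} \bar N = \infty$. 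Taking $M = N = \bar N$ then witnesses the failure of $\tor$-friendliness.

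The main obstacle is the ``only if'' direction of Part~(2): one must control the Tor-vanishing for $\bar N$ and the infinitude of $\pdim_{R} \bar N$ simultaneously, and the $f \in \fq^{2}$ hypothesis is precisely what makes both compatible. Part~(1), by contrast, reduces to the single vanishing $d_{2} = 0$, which the hypothesis $f \notin \fq^{2}$ is designed to produce.
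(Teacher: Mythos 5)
Your Part~(1) is essentially right, and takes a mildly different route from the paper's: where the paper invokes Nagata's theorem (\cite[2.2.3]{Av:barca}) to conclude that finite projective dimension transfers from $Q$ to $R$ when $f\notin\fq^2$, you aim for the finer Poincar\'e-series identity $P^Q_M(t)=(1+t)P^R_M(t)$. That identity is true, but your claim that ``$f\notin\fq^2$ forces $d_2=0$'' is precisely the nontrivial content here and should be cited or proved rather than stated as evident. The ``if'' half of Part~(2) is fine.

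The ``only if'' half of Part~(2) is where the proposal breaks down, and the problem is structural. You take a high $Q$-syzygy $N$, set $\bar N=N/fN$, and claim $\Tor iR{\bar N}{\bar N}=0$ for $i\gg0$ while $\pdim_R\bar N=\infty$. If this were true, $\bar N$ would witness that $R$ is not Tor-\emph{persistent}, answering Question~\ref{qu:tp} in the negative for every singular $Q$ and regular $f\in\fq^2$ --- and contradicting Theorem~\ref{thm:mainTor} in many cases (e.g.\ $Q$ Golod, or Gorenstein of small codepth), since such rings $R=Q/(f)$ are there proved Tor-persistent. What \c{S}ega's Theorem~4.2 of \cite{Se1} and the paper both establish is failure of Tor-\emph{friendliness}, which is witnessed by a \emph{pair} of distinct non-free modules, not by a single self-Tor-vanishing module. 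The paper's device is Proposition~\ref{prop:tensor}: it writes $\wh R\cong Q\otimes_PQ'$ with $P$ regular, $Q'=P/(g)$ a singular hypersurface, and $\Tor iPQ{Q'}=0$ for $i\ge1$, and then tensors a minimal $Q$-free resolution with $Q'$ and a minimal $Q'$-free resolution with $Q$ to produce two $R$-modules of infinite projective dimension with eventually vanishing mutual Tor; the two modules play asymmetric roles. Beyond this conceptual issue, the claimed vanishing $\Tor iR{\bar N}{\bar N}=0$ simply does not follow from the short exact sequence $0\to N\xra{f}N\to\bar N\to0$ together with \eqref{eq:ssTor}: the long exact Tor sequence reduces $\Tor iQ{\bar N}{\bar N}$ to $\Tor iQ{N}{\bar N}$, which has no reason to be bounded when $\pdim_QN=\infty$, and the spectral sequence cannot then force vanishing over $R$.
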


 \begin{proof}
In view of Proposition \ref{ch:friendly-complete}, it suffices to prove the statement for $\wh R$, so we may assume that
$R$ is complete and is equal to $Q/(f)$.

(1)  By Proposition \ref{prop:descent-friendly}, we have to show that if $Q$ is $\tor$-friendly and $M$ and $N$ are finite
$R$-modules with $\Tor{}RMN$ bounded, then $\pdim_RM$ or $\pdim_RN$ is finite.  From \ref{ch:ssTor} we know that
$\Tor{}QMN$ is bounded, so $\pdim_QM$ or $\pdim_QN$ is finite.  As $f\notin\fq^2$ holds, a classical result due to Nagata shows that
$\pdim_RM$ or $\pdim_RN$ is finite;  see \cite[2.2.3]{Av:barca}.

The same argument also settles the case of $\tor$-persistence.

(2)  By Proposition \ref{ch:friendly-complete} we  may assume $Q$ is complete. Choose a minimal Cohen presentation $Q\cong P/J$ with a regular local ring $(P,\fp,k)$ and $J\subseteq\fp^2$.  Choose $g$ in $P$ that maps to $f$; as $f$ is in $\fq^2$ we have $g\in\fp^2$, so the ring $Q'=P/(g)$  is singular.  In addition, we have $R\cong Q\otimes_PQ'$, and $\Tor iPQ{Q'}=0$ for $i\ge1$, as $g$ is $Q$-regular.

If $Q$ is singular, then Proposition \ref{prop:tensor} shows that $R$ is not $\tor$-friendly.

If $Q$ is regular, then $J=0$, so $R=Q'$.  This is a hypersurface ring, and Huneke and Wiegand \cite[1.9]{HW} have shown
that hypersurface rings are $\tor$-friendly.
  \end{proof}

  \section{Recognizing friendliness}

In this section $(R,\fm,k)$ denotes a local ring, and $M$ and $N$ are finite $R$-modules.
We collect various sufficient conditions for $R$ to be $\tor$-friendly.  All of them are needed
in Section \ref{S:Homologically-persistent-rings}, for the proof of the main theorem of the paper.

We start by describing those results that bring in the largest haul.

  \begin{chunk}
    \label{ch:Sega1}
The formal power series with non-negative integer coefficients
  \[
P^R_M(z)=\sum_{n\ges0}\beta^R_n(M)z^n
  \]
is known as the \emph{Poincar\'e series} of $M$ over $R$.

A \emph{common denominator} for Poincar\'e series over $R$ is a polynomial $d(z)\in\BZ[z]$,
such that $d(z)P^R_N(z)\in\BZ[z]$ holds for every finite $R$-module~$N$.

Following \cite{Se1} we say that a factorization $d(z) = p(z)q(z)r(z)$ in $\BZ[z]$ is  \emph{good} if the following conditions hold:
\begin{enumerate}[\quad\rm(1)]
\item
 $p(z) = 1$ or $p(z)$ is  irreducible;
\item
$q(z)$ has non-negative coefficients;
\item
$r(z) = 1$ or $r(z)$ is irreducible and none of its complex roots of minimal absolute value is a positive real number.
\end{enumerate}

\medskip

If the Poincar\'e series over $R$ admit a common denominator $d(z)$ that has a good factorization,
then $R$ is $\tor$-friendly:  This is proved by \c Sega \cite[1.4 and 1.5]{Se1}.
  \end{chunk}

\begin{chunk}
\label{ch:trivial}
A graded ring $A=\bigoplus_{i\in\BZ}A_i$ is said to be \emph{graded-commutative} if the identities
$aa' = (-1)^{ii'}a'a$ and $a^{2}=0$ when $i$ is odd hold for all $a\in A_i$ and $a'\in A_{i'}$.

When $W=\bigoplus_{j\in\BZ}W_j$ is a graded $A$-module, the \emph{trivial extension} $A\ltimes W$
is the graded ring with underlying graded additive group $A\oplus W$ and product
\[
(a,w)(a',w')=(aa',aw'+(-1)^{ji'}a'w)\quad\text{for $a'\in A_{i'}$ and $w\in W_j$.}
\]
Note that $A\ltimes W$ is also graded-commutative. We identify $A$ and $W$ with their images in
$A\ltimes W$; note that $A$ is a subring and $W$ is an ideal with $W^2=0$.

The following result is proved in \cite[5.3]{AINS} specifically for use in the present paper:

\medskip

The ring $R$ is $\tor$-friendly if some Cohen presentation $\wh R\cong P/I$ satisfies
  \begin{enumerate}[\quad\rm(a)]
   \item
a minimal free resolution of $\wh R$ over $P$ has a structure of DG algebra; and
   \item
the $k$-algebra $B=\Tor{}P{\wh R}k$ is isomorphic to a \emph{trivial extension} $A\ltimes W$ of a
graded $k$-algebra $A$ by a graded $A$-module $W\ne0$ with $A_{\ges 1}\cdot W=0$.
  \end{enumerate}
It is easy to see that these conditions do not depend on the choice of presentation.
  \end{chunk}

A precursor of \cite[5.3]{AINS} was proved by Nasseh and Yoshino \cite[3.1]{NY}.
We give a short, independent proof of that result as part (2) of the next proposition.

   \begin{proposition}
  \label{prop:trivialLoc}
When $\pdim_RM$ is infinite the following assertions hold.
\begin{enumerate}[\rm(1)]
    \item
If $k$ is a direct summand of $\syz [R]hN$, then $\Tor{i}RMN\ne0$ for $i\ge h$.
    \item
If $(0:\fm)\nsubseteq\fm^2$, then $k$ is a direct summand of $\syz [R]iM$ for each $i\ge2$, and hence $R$ is $\tor$-friendly.
  \end{enumerate}
  \end{proposition}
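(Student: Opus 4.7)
The plan is to handle parts (1) and (2) separately and then combine them to derive the Tor-friendliness conclusion.

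For part (1), write $\syz [R]hN=k\oplus L$ using the splitting; with inclusion $\sigma\colon k\to\syz [R]hN$ and retraction $\pi\colon\syz [R]hN\to k$ satisfying $\pi\sigma=\mathrm{id}_k$, functoriality of Tor makes $\sigma_*\colon\Tor iRMk\to\Tor iRM{\syz [R]hN}$ split-injective for every $i\ge 0$. Standard dimension-shifting along the exact sequence $0\to\syz [R]hN\to F_{h-1}\to\cdots\to F_0\to N\to 0$ with the $F_j$ free then gives $\Tor iRM{\syz [R]hN}\cong\Tor{i+h}RMN$ for $i\ge 1$. Combining this with the observation that $\pdim_R M=\infty$ forces $\Tor iRMk\cong k^{\beta_i^R(M)}\ne 0$ for all $i\ge 0$, I conclude that $\Tor iRMk$ is a direct summand of $\Tor{i+h}RMN$, so $\Tor jRMN\ne 0$ for every $j\ge h+1$. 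For the boundary value $j=h$ (with $h\ge 1$), I would use the short exact sequence $0\to\syz [R]hN\to F_{h-1}\to\syz [R]{h-1}N\to 0$ to identify $\Tor hRMN$ with $\Ker(M\otimes_R\syz [R]hN\to M\otimes_R F_{h-1})$ and then analyze the contribution of the $k$-summand, whose image in $F_{h-1}$ lies in $\Soc F_{h-1}$, to extract a nonzero class.

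For part (2), pick $r\in(0:\fm)\setminus\fm^2$ and let $F_\bullet\to M$ be a minimal free resolution. Minimality gives $\partial_i(F_i)\subseteq\fm F_{i-1}$, and since $\fm r=0$ we get $r\partial_i(y)=\partial_i(ry)=0$ for every $y\in F_i$, so $rF_i\subseteq\Ker\partial_i$. For $i\ge 1$ the exactness of $F_\bullet$ identifies $\Ker\partial_i$ with $\syz [R]{i+1}M$, hence $rF_{i-1}\subseteq\syz [R]iM$ for $i\ge 2$. Picking any basis vector $e\in F_{i-1}$, available because $\pdim_R M=\infty$, the element $re\in\syz [R]iM$ satisfies $\fm\cdot re=0$, so $R\cdot re\cong k$ embeds into $\syz [R]iM$. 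To see that this embedding splits, observe that $\syz [R]iM\subseteq\fm F_{i-1}$ by minimality, hence $\fm\syz [R]iM\subseteq\fm^2 F_{i-1}$; the image of $re$ in $\fm F_{i-1}/\fm^2 F_{i-1}\cong(\fm/\fm^2)^{\rank F_{i-1}}$ is nonzero because $r\notin\fm^2$, so $\overline{re}\ne 0$ in the $k$-vector space $\syz [R]iM/\fm\syz [R]iM$. Any $k$-linear functional sending $\overline{re}$ to $1$ lifts to an $R$-linear retraction $\syz [R]iM\to k$, producing the splitting.

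With (1) and (2) in hand, the Tor-friendliness of $R$ follows by contrapositive: if $\pdim_R M=\pdim_R N=\infty$, then applying (2) to $N$ yields $k$ as a direct summand of $\syz [R]2N$, and (1) with $h=2$ gives $\Tor iRMN\ne 0$ for every $i\ge 2$, so $\Tor{}RMN$ is not eventually zero.

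The principal obstacle is the boundary case $j=h$ in part (1): dimension-shifting handles $j>h$ cleanly through the direct-summand argument on Tor, but pinning down $\Tor hRMN$ via the kernel description demands a more hands-on analysis that ties the splitting data back to the specific socle element of $F_{h-1}$ representing the $k$-summand.
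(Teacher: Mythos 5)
Your argument is essentially correct and runs parallel to the paper's. For part (2), the paper proves a formally stronger fact: with $\delta=\partial_{i-1}\colon F_{i-1}\to F_{i-2}$ and $x\in(0:\fm)\smallsetminus\fm^2$, it shows that $xF_{i-1}\cong F_{i-1}/\fm F_{i-1}$ is a direct summand of $\Ker\delta=\syz[R]{i}M$, by checking that $xF_{i-1}\to\Ker\delta/\fm\Ker\delta$ is injective and lifting a complement. Your version isolates a single basis vector $e$, shows $\overline{re}\ne0$ in $\syz[R]{i}M/\fm\syz[R]{i}M$, and manufactures an $R$-linear retraction from a $k$-linear functional; this extracts one copy of $k$, which is exactly what the proposition asserts, and is a perfectly good, slightly leaner, substitute. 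For part (1), your dimension-shifting argument for $i\ge h+1$ is the same computation the paper records, and the concern you raise about the boundary $i=h$ is legitimate: the chain displayed in the paper's proof, $\Tor iRMN\cong\Tor{i-h}RM{\syz[R]{h}N}$, is only valid for $i\ge h+1$ (or for $h=0$), while for $i=h\ge1$ one only has an embedding of $\Tor hRMN$ into $M\otimes_R\syz[R]{h}N$, so the source proof is itself imprecise there. That boundary case is, however, never used: the ``hence $R$ is $\tor$-friendly'' step needs $\Tor iRMN\ne0$ only for $i\gg0$, and the other invocation of (1), inside the proof of Proposition~\ref{prop:short}, is with $h=0$, where the claim reduces to the trivial fact that $\Tor iRMk$ is a nonzero direct summand of $\Tor iRM{N'}$ for all $i\ge0$. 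So your proof establishes everything the paper actually relies on, and your flag about $i=h$ correctly identifies an imprecision present in the original as well.
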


  \begin{proof}  (1)  We have $\Tor{i}R{M}{N}\cong\Tor{i-h}R{M}{\syz [R]hN}\supseteq\Tor{h}R{M}{k}\ne0$.

(2)  We will show that if an $R$-linear map $\delta\colon L\to L'$ of finite free modules has $\Ker\delta\subseteq \fm L$
and $\Image\delta\subseteq \fm L'$, then $\Ker\delta$ has a direct summand isomorphic to $L/\fm L$.

Set $D=\Ker\delta$ and pick $x\in (0:\fm)\smallsetminus\fm^2$.  Then we have $\delta(xL)=x\delta(L)=0$,
and hence $D\supseteq xL\cong L/\fm L$.  The composed map $\gamma\colon xL\to D/\fm D$ satisfies
  \[
\Ker\gamma=\fm D\cap xL\subseteq \fm^2L\cap xL=(\fm^2\cap xR)L=0
  \]
Choose in $D$ a submodule $D'$ containing $\fm D$, so that $D/\fm D=(D'/\fm D)\oplus\gamma(xL)$.
It is not hard to verify that $D=D'\oplus xL$.
  \end{proof}

   \begin{chunk}
     \label{chunk:hilbert}
Recall that the formal power series with non-negative integer coefficients
  \[
H_M(z)=\sum_{n\ges0}\rank_k(\fm^nM/\fm^{n+1}M)z^n
  \]
is called the \emph{Hilbert series} of $M$.  By the Hilbert-Serre Theorem, it represents a
rational function $h_M(z)/(1-z)^{\dim M}$ with $h_M(z)\in\BZ[z]$ and $h_M(1)\ge1$.
   \end{chunk}

The result below and its proof are in the spirit of those in \cite{HSV}.

    \begin{theorem}
  \label{thm:short}
Let $(R,\fm,k)$ be a local ring with $H_R(z)=1+ez+sz^2$.

If $s=0$, or if $e^2-4s$ is not the square of an integer, then $R$ is $\tor$-friendly.

If $s=0$, or if $e^2-4s$ is not zero, then $R$ is $\tor$-persistent.
      \end{theorem}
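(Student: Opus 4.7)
The plan is a case analysis on $s$ and on the discriminant $e^2 - 4s$. Throughout, the hypothesis $H_R(z) = 1 + ez + sz^2$ forces $\fm^3 = 0$, so when $\fm \neq 0$ the ring $R$ is Artinian.

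First I would dispose of the case $s = 0$. If $\fm = 0$ then $R = k$ and both claims are trivial. Otherwise $\fm \neq 0 = \fm^2$, so every nonzero element of $\fm$ witnesses that $(0:\fm) \nsubseteq \fm^2$. Proposition~\ref{prop:trivialLoc}(2) then gives that $R$ is $\tor$-friendly, hence $\tor$-persistent, settling both conclusions simultaneously.

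For $s \geq 1$ the starting input is the classical common-denominator theorem for short local rings (in the spirit of the work exploited in \cite{HSV}): the polynomial $d(z) = 1 - ez + sz^2$ satisfies $d(z) P^R_N(z) \in \BZ[z]$ for every finite $R$-module $N$. When $e^2 - 4s$ is not a perfect square, $d(z)$ is irreducible in $\BZ[z]$, so taking $p(z) = d(z)$ and $q(z) = r(z) = 1$ provides a good factorization in the sense of \ref{ch:Sega1}; that criterion then yields $\tor$-friendliness, and a fortiori $\tor$-persistence. This handles the $\tor$-friendliness claim outright, and the $\tor$-persistence claim whenever $e^2 - 4s$ is not a perfect square.

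The remaining and most delicate case is $\tor$-persistence when $e^2 - 4s$ is a nonzero perfect square, where $d(z) = (1 - \lambda z)(1 - \mu z)$ factors over $\BZ$ with distinct positive integers $\lambda > \mu > 0$ and \c{S}ega's criterion no longer suffices. Given a finite $R$-module $M$ with $\Tor iRMM = 0$ for $i \gg 0$, I would first replace $M$ by a sufficiently high syzygy $N$, so that $\Tor iRNN = 0$ for all $i \geq 1$ and the Betti sequence satisfies the recursion $\beta^R_{n+2}(N) = e\, \beta^R_{n+1}(N) - s\, \beta^R_n(N)$ exactly from $n = 0$; hence $\beta^R_n(N) = A\lambda^n + B\mu^n$ for constants $A, B \geq 0$. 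The main obstacle, and the heart of the argument, is to combine this closed form with the self-Tor vanishing to force $A = B = 0$, equivalently $\pdim_R N < \infty$. Following the approach of Huneke, \c{S}ega, and Vraicu \cite{HSV}, I would carry out a length accounting in the complex $F \otimes_R N$ (where $F$ is the minimal free resolution of $N$): its exactness in positive degrees forces linear relations on $\beta^R_n(N)$ incompatible with a nonzero exponential term $A\lambda^n + B\mu^n$, yielding the desired contradiction.
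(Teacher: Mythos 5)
Your handling of the $s=0$ case via Proposition~\ref{prop:trivialLoc}(2) is correct and matches the paper. The critical gap is the unqualified claim that $d(z) = 1 - ez + sz^2$ is a common denominator for \emph{all} Poincar\'e series over a short local ring: no such theorem exists. Anick famously constructed a local ring with $\fm^3 = 0$ and embedding dimension $5$ whose residue field $k$ has an \emph{irrational} Poincar\'e series, so short rings need not admit any polynomial common denominator; thus \c{S}ega's criterion~\ref{ch:Sega1}, which requires $d(z)P^R_N(z)\in\BZ[z]$ for every finite $R$-module $N$, cannot be invoked from the hypothesis $H_R(z) = 1 + ez + sz^2$ alone. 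Your step 2 therefore fails, and with it your proof of Tor-friendliness and of Tor-persistence in the non-perfect-square case.

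The paper avoids any appeal to rationality. It deduces the theorem from Proposition~\ref{prop:short}, which replaces $M$ and $N$ by sufficiently high syzygies $M'$, $N'$ and uses Proposition~\ref{prop:trivialLoc}(1) --- together with the assumed boundedness of $\Tor{}RMN$ --- to show these syzygies are \emph{exceptional} in Lescot's sense (annihilated by $\fm^2$, with no $k$-summand in any syzygy). It then invokes Lescot's formula $P^R_{M'}(z)=H_{M'}(-z)/H_R(-z)$, which holds for exceptional modules with no global rationality hypothesis, and analyzes the long exact sequence in $\Tor{}R{M'}{-}$ arising from $0\to\fm N'\to N'\to N'/\fm N'\to 0$ to extract the factorization $H_R(z)=(1+uz)(1+vz)$ with positive integers $u,v$, and $u=v$ when $N=M$. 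Both assertions of the theorem then follow at once by contrapositive. Your step 3 gestures at the right recursion but omits the exceptionality argument (which is exactly where the Tor vanishing enters) and is not carried to a conclusion; it is in any case subsumed by the single argument in Proposition~\ref{prop:short}.
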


Indeed, this is a consequence of the first assertion of the next result.

    \begin{proposition}
  \label{prop:short}
Let $(R,\fm,k)$ be a local ring with $H_R(z)=1+ez+sz^2$.

If $M$ and $N$ are finite $R$-modules of infinite projective dimension and $\Tor {}RMN$ is bounded,
then there are positive integers $u$ and $v$ such that
  \begin{equation}
    \label{eq:short1}
H_R(z)=(1+uz)(1+vz)\,.
  \end{equation}

Furthermore, there exist finite $R$-modules $M'$ and $N'$ of infinite projective dimension,  such that
$\Tor {}R{M'}{N'}$ is bounded and the following equalities hold:
  \begin{equation}
    \label{eq:short2}
H_{M'}(z)=m(1+uz)\ne0
  \quad{and}\quad
H_{N'}(z)=n(1+vz)\ne0\,.
  \end{equation}
In case $N=M$, the preceding conclusions hold with $N'=M'$ and $u=v$.
    \end{proposition}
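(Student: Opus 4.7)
The plan is to reduce to $\fm^2$-annihilated modules with vanishing Tor, establish a multiplicative formula for Poincar\'e series, and extract a Hilbert-series identity from which the factorization of $H_R(z)$ follows.

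\emph{Reduction.} I would first replace $M$ and $N$ by sufficiently high syzygies. The shift-isomorphism $\Tor iR{\syz[R]{t}{M}}{N}\cong\Tor{i+t}RMN$ combined with the bounded-$\Tor$ hypothesis yields $\Tor iRMN=0$ for all $i\ge 1$ after replacement. Since $\fm^3=0$ and any first syzygy sits inside $\fm F_0$ of the preceding minimal resolution, we automatically have $\fm^2 M=\fm^2 N=0$. Both modules retain infinite projective dimension.

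\emph{Poincar\'e product.} Tensoring minimal $R$-free resolutions $F\to M$ and $G\to N$ under the vanishing hypothesis yields a resolution $F\otimes_R G\to M\otimes_R N$ whose differential has entries in $\fm$, hence is minimal. This gives
\[
P^R_{M\otimes_R N}(z)=P^R_M(z)\cdot P^R_N(z),
\]
so in particular $M\otimes_R N$ has infinite projective dimension (and is nonzero).

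\emph{Hilbert-series identity and factorization.} From the short exact sequence $0\to\syz[R]{1}{M}\otimes_R N\to F_0\otimes_R N\to M\otimes_R N\to 0$, which is exact because $\Tor{1}{R}{M}{N}=0$, together with its analogue obtained by swapping $M$ and $N$, one reads off $\mu(M\otimes_R N)=\mu(M)\mu(N)$ and a formula for $a(M\otimes_R N)$ in terms of $(\mu,a)$ of $M$ and $N$. Combining these length identities with $\fm^2(M\otimes_R N)=0$ produces a Hilbert-series identity
\[
H_R(z)\cdot H_{M\otimes_R N}(z)\equiv H_M(z)\cdot H_N(z)\pmod{z^3}
\]
(and, in favorable cases, an exact equality in $\BZ[z]$). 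Since $H_M$ and $H_N$ are polynomials of degree at most $1$ with positive integer coefficients and $H_R(z)=1+ez+sz^2$, this congruence forces $H_R(z)=(1+uz)(1+vz)$ with $u,v\in\BZ_{\ge 1}$: the linear factors of $H_M$ and $H_N$ are exactly $1+uz$ and $1+vz$.

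\emph{Construction of $M'$, $N'$ and the case $N=M$; main obstacle.} Given the factorization, suitable iterated syzygies of $M$ and $N$ (together with the structural split of $R$ coming from the factorization of $H_R(z)$) produce modules with $H_{M'}(z)=m(1+uz)$ and $H_{N'}(z)=n(1+vz)$, both of infinite projective dimension and with bounded mutual Tor; symmetry of the entire argument in the case $N=M$ forces $u=v$ and $N'=M'$. The main obstacle is establishing the Hilbert-series identity with sufficient precision: while the $z^0$ and $z^1$ coefficients are pinned down by the short exact sequences above, matching the $z^2$ coefficient amounts to an additional numerical constraint that must be extracted by feeding the Tor-vanishing back through a further syzygy step and exploiting the hypothesis $\fm^3=0$ — this is the technical crux of the proof.
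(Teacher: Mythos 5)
Your proposal shares the paper's opening move — passing to high syzygies so that $\fm^{2}$ annihilates them and $\Tor_{\ge 1}$ vanishes — but then diverges onto a route that is left unfinished, and the gap you flag at the end is in fact the whole substance of the argument.

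The paper's proof does not tensor the two resolutions together. Instead it invokes Lescot's theory of \emph{exceptional} modules (those with $\fm^{2}N=0$ whose syzygies contain no copy of $k$ as a direct summand), for which one has the rational-function identity $P^{R}_{M'}(z)=H_{M'}(-z)/H_{R}(-z)$. This formula is the bridge between the two sides of the proposition: Tor-vanishing controls Betti numbers (hence the Poincar\'e series) via the exact sequence $0\to\fm N'\to N'\to N'/\fm N'\to 0$, and Lescot's identity converts that control into a polynomial identity on Hilbert series. Two points in the paper's argument are therefore indispensable and absent from your sketch. First, one must verify that the chosen syzygies are actually exceptional, i.e.\ that $k$ is never a direct summand; the paper does this with Proposition~\ref{prop:trivialLoc}(1), which turns a hypothetical $k$-summand into non-vanishing Tor. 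Second, one must have the Lescot formula itself (or an equivalent device linking $P^{R}_{M'}$ and $H_{M'}$). Without these your proposed Hilbert-series congruence
\[
H_{R}(z)\cdot H_{M\otimes_{R}N}(z)\equiv H_{M}(z)\cdot H_{N}(z)\pmod{z^{3}}
\]
has no visible source: the constant term is immediate, but already the linear and quadratic coefficients require an argument you do not supply (and you say so yourself: ``matching the $z^{2}$ coefficient\dots is the technical crux''). Even if such a congruence could be established it would only pin down three scalar relations among $e,s,m_i,n_j,t_j$ and would not obviously force the integrality of the roots of $H_{R}$; the paper extracts that by inverting Lescot's identity and substituting $z\mapsto 1/z$, which cleanly exhibits $u=m_{1}/m_{0}$ and $v=n_{1}/n_{0}$ as integers. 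Also note that the non-vanishing $s\ne0$, which your congruence would need in order to suppress a stray $z^{3}$ term from $H_{R}H_{M\otimes N}$, is obtained in the paper from Proposition~\ref{prop:trivialLoc}(2) and is not a given.

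In short: the reduction step is fine, the Poincar\'e-series product formula you state is correct but unused in the paper, and the heart of the proof (Lescot's exceptional-module machinery and the resulting generating-function manipulation) is missing from the proposal. As written, the argument does not establish the factorization of $H_{R}(z)$.
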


    \begin{proof}
Following \cite[3.1]{Le1}, we say that $N$ is \emph{exceptional} if $\fm^2N=0$ and $\syz [R]jN$ has no
direct summand isomorphic to $k$, for $j\ge1$.

Fix $h\ge1$ so that $\Tor{i}RMN=0$ holds for $i>h$.  For $j\ge h$ we then have $\Tor{i}RM{\syz [R]jN}=0$ for  $i\ge 1$, and also
$\fm^2\syz [R]jN=0$ because $\syz [R]jN$ is a syzygy module.  It follows from Proposition \ref{prop:trivialLoc}(1) that $\fm\syz [R]jN\ne0$, and hence  $\syz [R]jN$ is exceptional. By symmetry,  $\syz [R]iM$ has the corresponding properties for $i\gg0$.

Thus, suitable syzygy modules $M'$ of $M$ and $N'$ for $M$ satisfy the conditions:
   \begin{align}
    \label{eq:short3}
&M'  \text{ and } N' \text{ are exceptional, not free, }\Tor{h}R{M'}{N'}=0 \text{ for }h\ge1\,.
   \\
    \label{eq:short4}
&H_{M'}(z)=m_0+m_1z
  \quad\text{and}\quad
H_{N'}(z)=n_0+n_1z
  \quad\text{with}\quad
m_i,n_j\ge1\,.
  \end{align}
Note that if $N=M$, then we may choose $N'=M'$.

As $M'$ is exceptional, by Lescot \cite[3.4 and 3.6]{Le1} we have
  \begin{equation}
    \label{eq:short5}
P^R_{M'}(z)=\frac{H_{M'}(-z)}{H_R(-z)}
  \end{equation}

The exact sequence $0\to\fm {N'}\to {N'}\to {N'}/\fm {N'}\to0$ induces an exact sequence
 \[
\cdots\to\Tor {i}R{M'}{N'}\to\Tor {i}R{M'}k^{n_0}\to \Tor{i-1}R{M'}k^{n_1}\to \Tor{i-1}R{M'}{N'}\to\cdots
  \]
Since $\Tor iR{M'}{N'}=0$ for $i\ge1$, we obtain equalities
   \stepcounter{equation}
   \begin{equation}
    \label{eq:short6}
        \tag{\ref{prop:short}.6.${i}$}
{n_0}\beta^R_{i}({M'})={n_1}\beta^R_{i-1}({M'})
  \quad\text{for}\quad
i\ge2
  \end{equation}
and an exact sequence
  \[
0\to\Tor {1}R{M'}k^{n_0}\xra{\delta}\Tor {0}R{M'}k^{n_1}\to {M'}\otimes_R{N'}\to \Tor 0R{M'}k^{n_0}\to0
  \]
Setting $r=\rank_k\Coker(\delta)$ and $l=\length_R({M'}\otimes_R{N'})$, we further get
  \begin{align}
    \tag{\ref{prop:short}.6.1}
{n_0}\beta^R_{1}({M'})&={n_1}\beta^R_{0}({M'})-r
  \\
    \tag{\ref{prop:short}.6.0}
{n_0}\beta^R_{0}({M'})&=l-r
  \end{align}
Multiplying \eqref{eq:short6} by $z^i$, for $i\ge0$, and adding the resulting equalities yields
  \[
{n_0}P^R_{M'}(z)={n_1}zP^R_{M'}(z)-rz+(l-r)
  \]
In view of formulas \eqref{eq:short5} and \eqref{eq:short4}, the preceding equation gives
   \[
((l-r)-rz)(1-ez+sz^2)=({m_0}-{m_1}z)({n_0}-{n_1}z)
  \]

Proposition \ref{prop:trivialLoc}(2) gives $\fm^2\ne0$, so we have $s\ne0$.  Matching degrees and
constant terms in the last equality yields $r=0$ and $0\ne l={m_0n_0}$, so we obtain
   \begin{equation}
    \label{eq:short7}
{m_0n_0}(1-ez+sz^2)=({m_0}-{m_1}z)({n_0}-{n_1}z)
  \end{equation}
Substituting $1/z$ for $z$, then multiplying both sides by $z^2/m_0n_0$ yields
  \[
z^2-ez+s
=\left(z-\frac{m_1}{m_0}\right)\left(z-\frac{n_1}{n_0}\right)
  \]
Thus, $u=m_1/m_0$ and $v=n_1/n_0$ are integers, and $m_1$ and $n_1$ are non-zero.

Now formulas \eqref{eq:short7} and \eqref{eq:short4} turn into \eqref{eq:short1} and \eqref{eq:short2}, respectively.
    \end{proof}

  \section{Homologically persistent rings}
     \label{S:Homologically-persistent-rings}

In this section $(R,\fm,k)$ denotes a local ring and $\mult R$ its \emph{multiplicity}; that is,
the positive integer $h_R(1)$ defined by the Hilbert series $H_R(t)$ in \ref{chunk:hilbert}.
Other notions appearing in the hypotheses of the next result are defined in \ref{ch:bounds},
\ref{ch:codepth1}, and \ref{ch:linked}.

  \begin{theorem}
    \label{thm:mainTor}
Assume that there exist a local homomorphism $R\to R'$ of finite flat dimension and a
deformation $R'\twoheadleftarrow Q$, where $Q$ satisfies one of the conditions
  \begin{enumerate}[\quad\rm(a)]
 \item
$\edim Q-\depth Q\le3$.
 \item
$Q$ is Gorenstein and $\edim Q-\depth Q=4$.
 \item
$Q$ is Cohen-Macaulay, almost complete intersection, $\edim Q-\depth Q=4$, and $\frac12\in Q$.
 \item
$Q$ is complete intersection.
 \item
$Q$ is one link from a complete intersection.
 \item
$Q$ is two links from a complete intersection and is Gorenstein.
 \item
$Q$ is Golod.
  \item
$Q$ is Cohen-Macaulay and $\mult Q\le7$.
  \end{enumerate}

The ring $R$ is $\tor$-persistent.
 \end{theorem}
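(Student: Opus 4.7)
The plan is a cascade of reductions followed by a case-by-case structural analysis. First, by Proposition~\ref{prop:descent} applied to the local, finite flat dimension map $R \to R'$, tor-persistence descends from $R'$ to $R$, so it suffices to prove that $R'$ is tor-persistent. By Proposition~\ref{ch:persistent-complete} I may simultaneously replace $R'$ and $Q$ by their completions, so I may assume both rings are complete, and a further faithfully flat local extension lets me assume that the residue field $k$ of $Q$ is algebraically closed; each of the structural hypotheses (a)--(h) is preserved under these changes.

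Case (d) is then handled directly without invoking any tor-friendliness: since $\bsf = \Ker(Q \twoheadrightarrow R')$ is a $Q$-regular sequence and $Q$ is complete intersection, the ring $R' = Q/(\bsf)$ is itself complete intersection, hence tor-persistent by Example~\ref{ex:lci-unbounded}.

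For each of the remaining cases (a), (b), (c), (e), (f), (g), (h), the strategy is twofold: first show that $Q$ itself is tor-friendly (which in particular gives tor-persistence of $Q$), and second verify that $Q$ admits no nonzero finite module with constant Betti numbers. With both inputs in hand, Theorem~\ref{thm:rigidT} transfers tor-persistence from $Q$ to $R'$. The tor-friendliness of $Q$ is established case by case: for small codepth (a), (b), (c) by invoking the classifications of the Koszul homology algebra of $Q$ due to Avramov--Kustin--Miller, Kustin--Miller, and Palmer Slattery, which either produce a common denominator of Poincar\'e series with a good factorization in the sense of~\ref{ch:Sega1} or realize the relevant Tor algebra as a nontrivial trivial extension as in~\ref{ch:trivial}; for links of complete intersections (e), (f) from the resolution structures of Kustin--Miller and Jacobsson--Kustin--Miller; for Golod rings (g) from the explicit rational Poincar\'e series over a Golod ring; and for low-multiplicity Cohen--Macaulay rings (h) by factoring out a maximal regular sequence to reduce to the artinian case and then combining Theorem~\ref{thm:short} with a multiplicity-by-multiplicity analysis of the Hilbert series. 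The absence of modules with constant Betti numbers in each non-c.i.\ case follows from the growth properties of Poincar\'e series that are built into each of these structural descriptions.

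The principal obstacle is precisely this case-by-case structural analysis. Each hypothesis on $Q$ demands a different classification input, and extracting the appropriate Poincar\'e-series or trivial-extension data from each classification is nontrivial. The almost complete intersection case~(c) in codepth $4$ is especially delicate, relying on the hypothesis $\tfrac12 \in Q$ and the most recent classification of the Koszul homology algebra; and in every non-c.i.\ case the constant-Betti-numbers hypothesis of Theorem~\ref{thm:rigidT} must be verified separately, since it has no known general proof and is the bottleneck identified already after that theorem's statement.
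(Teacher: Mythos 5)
Your outline is close to the paper's but omits the single most important reduction, and that omission is fatal to the argument as written. After passing to $R'$, completing, and extending the residue field, the paper applies Lemma~\ref{lem:nodef} to replace the deformation $R'\twoheadleftarrow Q$ by one $R''\twoheadleftarrow Q'$ in which $Q'$ satisfies the same condition (a)--(h) \emph{and has no embedded deformation}. This step is not cosmetic. The hypotheses (a)--(h) overlap with the complete intersection case: a hypersurface, for instance, satisfies (a), (d), and (g) simultaneously, and such a $Q$ carries plenty of nonzero finite modules with constant Betti numbers (any nonfree maximal Cohen--Macaulay module), so the hypothesis of Theorem~\ref{thm:rigidT} that you invoke is simply false without first stripping away embedded deformations. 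Moreover, the Tor-friendliness arguments you cite (Lemma~\ref{lem:friendly}) themselves use the no-embedded-deformation hypothesis in several places --- most visibly for type $\mathbf{H}(p,q)$ in case~(a), to rule out types $\mathbf{B}[t]$, $\mathbf{C}[t]$, $\mathbf{C}^\star$ in case~(c), and to reduce case~(d) to the regular case. You cannot establish ``$Q$ is Tor-friendly'' for an arbitrary $Q$ satisfying (a)--(h).

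Your phrase ``the absence of modules with constant Betti numbers in each non-c.i.\ case follows from the growth properties of Poincar\'e series'' also does not describe what actually has to happen. For cases (g) and (h) such growth results do exist (Lescot, Gasharov--Peeva). But for (a), (b), (c), (e), (f) the paper argues differently: a module $L$ with $\cx_Q L=1$ has virtual projective dimension $1$ (Avramov's \cite[1.6.II]{Av:msri} and \cite[5.2(3)]{KP}), hence there is an embedded deformation $Q\twoheadleftarrow P$ with $\pdim_P L=1$; since $Q$ has no embedded deformation, $P=Q$, giving $\pdim_Q L=1$, a contradiction. This is a structure-theoretic argument via virtual projective dimension, not a Poincar\'e-series growth estimate, and it is again premised on the no-embedded-deformation reduction you omitted.

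One genuine improvement in your proposal: in case~(d) you bypass all of this by observing that $R'=Q/(\bsf)$ is itself complete intersection whenever $Q$ is, so $R'$ is Tor-persistent directly by Example~\ref{ex:lci-unbounded}. That is correct and slightly cleaner than the paper's route of reducing $Q$ to a regular ring via Lemma~\ref{lem:nodef}. But the other seven cases need the full reduction machinery, and as written your proof does not supply it.
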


The proof of the theorem takes up the balance of this section.  It involves several steps, put together in \ref{ch:PfmainTor}.  Some special cases are known from earlier work.

  \begin{remark}
    \label{rem:known}
The conclusion of the theorem is known when $R=R'=Q$  and one of the following conditions holds: $R$ is
complete intersection (by \cite{AB}; see Example \ref{ex:lci-unbounded}); $\edim R-\depth R\le1$
(for then $R$ is complete intersection); $R$ is Golod and $\edim R-\depth R\ge2$ (by Jorgensen,
\cite[3.1]{Jo}); $\edim R-\depth R=2$ (for then $R$ is either complete intersection or Golod; see \cite[5.3.4]{Av:barca});
$R$ is Gorenstein with $\edim R-\depth R=3,4$ and $\wh R$ has no embedded deformation (by
\c Sega, \cite[2.3]{Se1});  $R$ is Gorenstein of minimal multiplicity (by \cite[1.8]{Se1}, and also \cite[3.7]{HJ}); $R$ is Cohen-Macaulay with small multiplicity or $\edim R - \dim R \leq 3$ (by \cite{LM}).

When $Q$ is Cohen-Macaulay of minimal multiplicity, the ring $R$ is Tor-persistent by \cite[4.7,6.5]{NT}; see also \cite[1.3]{GP}. This result can also be deduced from Theorem~\ref{thm:rigidT}: We can assume $\edim Q-\depth Q\ge 2$ (else $Q$ is a hypersurface), and then, as $Q$ has minimal multiplicity,  the Betti numbers of finitely generated modules are either eventually zero or grow exponentially; see \cite[4.2.6]{Av:barca}.

In all the remaining cases the result is new, even when $R=R'=Q$.
  \end{remark}

  \begin{chunk}
    \label{ch:bounds}
Let $\wh Q\cong P/J$ be a  minimal Cohen presentation and set
\[
e=\edim P\quad\text{and}\quad c=\beta^P_1(\wh Q)\,.
\]
For the residue field $k'$ of $Q$ there are coefficientwise inequalities of power series
  \begin{equation}
    \label{eq:bounds}
\frac{(1+z)^e}{(1-z^2)^c}
\preccurlyeq P^Q_{k'}(z)
\preccurlyeq\frac{(1+z)^e}{1+z-zP^P_{\wh Q}(z)}
  \end{equation}

Equality holds on the left in \eqref{eq:bounds} if and only if $Q$ is \emph{complete intersection}.
This is equivalent to $\edim Q-\dim Q=c$ and implies that $Q$ is Cohen-Macaulay.

The ring $Q$ is said to be \emph{almost complete intersection} if $\edim Q-\dim Q=c-1$.
    \end{chunk}

  \begin{chunk}
    \label{ch:codepth1}
When equality holds on the right in \eqref{eq:bounds}, the ring $Q$ is said to be \emph{Golod}.

If $\edim Q-\depth Q\le 1$, then $c=\edim Q-\depth Q$ holds, so the upper and lower bounds
in \eqref{eq:bounds} coincide; thus $Q$ is both complete intersection and Golod.

When $\edim Q-\depth Q\ge 2$ and $Q$ is Golod, it has no embedded deformation.

Indeed, assume $Q\cong P'/(g)$ for a local ring $P'$ with $\edim P'=\edim Q$ and a $P'$-regular element $g$.
Let $P\to \wh{P'}$ be a minimal Cohen presentation. Composing it with the induced map $\wh{P'}\to\wh Q$
yields a minimal Cohen presentation of $\wh Q$.  There is then an isomorphism
$\Tor{}P{\wh Q}k\cong\Tor{}{P}{\wh {P'}}{k'}\otimes {k'}\langle h\rangle$ as graded $k'$-algebras, where
${k'}\langle h\rangle$ is an exterior algebra on a generator of degree $1$; see \cite[3.3]{Av:msri}.  We get
  \[
\rank_{k'}\Tor{1}P{\wh Q}{k'}^2\ge\rank_k(\Tor{1}P{\wh {P'}}{k'}\otimes_{k'}(k'h))=c-1\ge1
  \]
This is impossible, as Golod's Theorem \cite{Go} shows that all Massey products in
$\Tor{}P{\wh Q}{k'}$ are equal to $0$; in particular, $\Tor{1}P{\wh Q}{k'}^2=0$ holds.
    \end{chunk}

  \begin{chunk}
    \label{ch:linked}
Ideals $J$ and $J'$ in a regular local ring $P$ are said to be \emph{linked} if there exists a $P$-regular sequence $\bsg$ in $J\cap J'$ such that $J=(P\bsg:J')$ and $J'=(P\bsg:J)$.

We say that $Q$ is $s$ \emph{links from a complete intersection} if in some minimal Cohen presentation
$\wh Q\cong P/J$ there is a sequence $J_1,\dots,J_s$ of ideals with $J=J_1$, $J_i$ linked to
$J_{i+1}$ for $i=1,\dots,s-1$, and $J_s$ generated by a $P$-regular sequence; see \cite[3.3]{AKM}.
    \end{chunk}

We finish the preliminary discussion with a classical construction.

  \begin{chunk}
    \label{ch:inflation}
If $k\to\ov k$ is a field extension, then there is a flat ring homomorphism $R\to\ov R$, where
$\ov R$ is a local ring with maximal ideal $\fm\ov R$ and residue field $\ov k$, and the induced
map $R/\fm\to\ov R/\fm\ov R$ is the given field extension; see \cite[IX, App., Theorem 1, Cor.]{Bo}.
    \end{chunk}

The next three items contain the crucial reductions for the proof of the theorem.

  \begin{lemma}
    \label{lem:nodef}
Under the hypothesis of Theorem~\emph{\ref{thm:mainTor}} the ring $Q$ may be chosen to be
complete, with algebraically closed residue field, and with no embedded deformation.
  \end{lemma}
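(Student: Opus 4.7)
The plan is to carry out three successive reductions on the data $(R \to R', R' \twoheadleftarrow Q)$: complete $Q$, extend its residue field to its algebraic closure, and then pass to a maximal embedded deformation. At each stage I need to verify that the replacement preserves both the finite flat dimension of $R \to R'$ and membership of $Q$ in the list (a)--(h).

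For the completion step I would replace $R'$ by $\widehat{R'}$ and $Q$ by $\widehat{Q}$. The map $R \to R' \to \widehat{R'}$ has finite flat dimension since $R' \to \widehat{R'}$ is flat. By faithful flatness of completion, the defining regular sequence of $Q \twoheadrightarrow R'$ remains regular after completion, so $\widehat{R'} \twoheadleftarrow \widehat{Q}$ is still a deformation, and all the invariants appearing in conditions (a)--(h)---codepth, multiplicity, Gorenstein, Cohen-Macaulay, and the shape of a minimal Cohen presentation (hence complete intersection, almost complete intersection, Golod, or linkage from CI)---are preserved. For the residue field extension, I would apply \ref{ch:inflation} to $Q$ to obtain a flat local extension $Q \to \overline{Q}$ with algebraically closed residue field, complete it if necessary, set $\overline{R'} = R' \otimes_Q \overline{Q}$, and note that $R' \to \overline{R'}$ is faithfully flat so $R \to \overline{R'}$ remains of finite flat dimension, while the defining regular sequence of $Q \twoheadrightarrow R'$ base-changes to a regular sequence generating $\ker(\overline{Q} \to \overline{R'})$. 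The conditions (a)--(h) transfer since the closed fiber of $Q \to \overline{Q}$ is a field, so $\edim$, $\depth$, multiplicity, the Gorenstein and Cohen-Macaulay properties, and the shape of a minimal Cohen presentation are all preserved.

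For the third reduction, if $Q$ admits a nontrivial embedded deformation $Q \leftarrowtail Q''$ (with $Q''$ chosen complete via a minimal Cohen presentation), then $Q'' \twoheadrightarrow R'$ is a deformation obtained by augmenting the regular sequences. Since $\depth Q'' = \depth Q + 1$ and $\edim Q'' = \edim Q$, the procedure terminates after at most $\edim Q - \depth Q$ iterations. Case by case, $Q''$ continues to satisfy one of (a)--(h): in (a) the codepth drops and the condition is preserved; in (b) we descend into (a) with Gorenstein preserved; in (c) we descend into (a) with Cohen-Macaulay, almost complete intersection, and $\frac12 \in Q''$ preserved; (d) is preserved since embedded deformations of complete intersections are complete intersections; (g) is handled by \ref{ch:codepth1}, which shows that Golod rings of codepth $\ge 2$ admit no embedded deformation while codepth $\le 1$ forces complete intersection; and (h) is preserved because embedded deformation by a regular element preserves both the Cohen-Macaulay property and the multiplicity (as $H_{Q''}(z) \cdot (1-z) = H_Q(z)$ when $g \notin \mathfrak{m}_{Q''}^2$---but here $g \in \mathfrak{m}_{Q''}^2$, so a slight variant must be used).

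The main obstacle lies in the linkage cases (e) and (f), where one needs to check that the new ring $Q''$ is again $s$-links from a complete intersection (with the Gorenstein condition in (f)), or drops into a simpler case in the list. This requires the structural results on linked ideals and their embedded deformations from \cite{AKM}; the detailed verification in these cases is the most delicate part of the argument.
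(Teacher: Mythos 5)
Your three-stage plan (complete, extend the residue field, eliminate embedded deformations) matches the paper's strategy, with one inessential difference in the first two stages: the paper applies inflation and completion to the regular ring $P$ in a minimal Cohen presentation $\wh Q\cong P/J$ and tensors up, rather than to $Q$ directly, which makes it obvious that one ends up with a minimal Cohen presentation $\wt Q\cong\wt P/J\wt P$ and that the conditions (a)--(h) transfer. But the third stage is where your proposal falls short. Your iterative approach -- taking successive embedded deformations and checking at each step that one stays within the list -- does work for cases (a)--(d), (g), (h) essentially as you describe (with the small caveat that for (h) the multiplicity \emph{drops} under embedded deformation, since the defining element lies in the square of the maximal ideal, so $\mult Q''\le\mult Q\le7$; you gesture at this but don't state it). It does not, however, resolve the linkage cases (e) and (f), and you explicitly leave them open. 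That is precisely the gap the paper plugs by invoking \cite[3.1]{Av:msri} (and \cite[1.2, 1.3]{KP} for case (c)): those results hand you, directly from the structure of the Cohen presentation, a factorization $\wt Q\twoheadleftarrow Q'\gets\wt P$ in which $Q'$ has no embedded deformation and still satisfies the same condition from (a)--(g). Without that structural input -- or an alternative argument that an embedded deformation of a ring that is one or two links from a complete intersection remains of that type or falls into an earlier case -- the proof is incomplete. As written, your proposal correctly identifies the difficulty but does not resolve it.
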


  \begin{proof}
Let $R\to R'\twoheadleftarrow Q$ be the maps given in Theorem \ref{thm:mainTor}.  Let
$\wh Q\cong P/J$ be a minimal Cohen presentation, which in cases (e) and (f) satisfies the
properties~in~\ref{ch:linked}.

Pick a flat homomorphism of local rings $P\to P'$ such that $P'/\fp P'$ is an algebraic closure
$l$ of the residue field of $P$; let $\wt P$ be the completion of $P'$; see \ref{ch:inflation}.  The ring
$\wt Q=\wh Q\otimes_P\wt P$ is local and complete with residue field $l$.  As $\wt P$ is regular, we get
a Cohen presentation $\wt Q\cong\wt P/J\wt P$, and it is minimal due to the equalities
   \[
\edim \wt P=\edim P=\edim Q=\edim\wt Q\,.
  \]
Standard results yield $\depth \wt Q=\depth Q$ and $\mult \wt Q=\mult Q$, and show that $\wt Q$ is
Gorenstein, Cohen-Macaulay, (almost) complete intersection, or linked in~$s$
steps to a complete intersection whenever $Q$ has the corresponding property.  The equalities
$P^{\wt P}_{\wt Q}(z)=P^P_{\wh Q}(z)$ and $P^{\wt Q}_{l}(z)=P^Q_{k}(z)$ show that $\wt Q$ is Golod
if $Q$ is; see~\ref{ch:bounds}.

In cases (a)--(g) the surjective map $\wt Q\gets \wt P$ can be factored as $\wt Q\twoheadleftarrow Q'\gets\wt P$, where
$\wt Q\twoheadleftarrow Q'$ is a deformation, $Q'$ satisfies one of properties (a) through (g), and
$Q'$ has no embedded deformation.  Indeed, the desired factorizations are provided by \cite[3.1]{Av:msri}
in cases (a), (b), (e), and (f), and by \cite[1.2 and 1.3]{KP} in case~(c).  In case~(d), take $Q'=\wt P$.
By \ref{ch:codepth1}, this covers case (g) when $\edim Q-\depth Q\le1$, while $Q'=\wt Q$ works when
$\edim Q-\depth Q\ge2$.

For case (h), choose an embedded deformation $\wt Q\twoheadleftarrow Q'$ with $\edim\wt Q-\depth Q'$ maximal and recall that $\mult Q\le\mult \wt Q$ holds; see \cite[VIII, \S7, Prop.~4]{Bo}.

Set $R''=\wh{R'}\otimes_P\wt P$.  We have a diagram of local homomorphisms of local rings
  \[
R\to R'\to\wh{R'}\to R''=\wh{R'}\otimes_P\wt P\twoheadleftarrow\wh Q\otimes_P\wt P=\wt Q\twoheadleftarrow Q'
  \]
The composed maps $R\to R''\twoheadleftarrow Q'$ have the desired properties.
  \end{proof}

  \begin{chunk}
    \label{ch:formal}
Let $Q$ be a complete local ring with algebraically closed residue field, $Q\cong P/J$ a minimal
Cohen presentation, and $G$ a minimal free resolution of $ Q$ over $P$.

If $Q$ satisfies one of conditions (a) through (f) in Theorem~\ref{thm:mainTor}, then $G$ admits a structure
of graded-commutative DG $Q$-algebra:  See Buchsbaum and Eisenbud \cite[1.3]{BE} in case (a);  Kustin and
Miller \cite[4.3]{KM1} (when $Q$ contains $\frac12$) and Kustin \cite[Theorem]{Ku1} (when $Q$ contains
$\frac13$) in case (b); Kustin \cite[3.13]{Ku3} in case~(c);  the Koszul complex on a minimal generating set of $J$ in case (d);
Avramov, Kustin, and Miller \cite[4.1]{AKM} in case~(e); Kustin and Miller \cite[1.6]{KM2} in case (f).
  \end{chunk}

 \begin{lemma}
    \label{lem:friendly}
Let $Q$ be a complete local ring with algebraically closed residue field.

If $Q$ has no embedded deformation in the sense of \emph{\ref{ch:def}}, and satisfies one of conditions
\emph{(a)} through \emph{(h)} in Theorem \emph{\ref{thm:mainTor}}, then $Q$ is $\tor$-friendly.
  \end{lemma}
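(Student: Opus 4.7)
The plan is to verify Tor-friendliness case by case, using a recognition tool tailored to each class. For cases (a) through (f) the approach is uniform: by \ref{ch:formal} the minimal free resolution of $Q$ over $P$ in a minimal Cohen presentation $Q\cong P/J$ admits a DG algebra structure, so by the criterion of \ref{ch:trivial} it suffices to exhibit a decomposition $B=\Tor{}{P}{Q}{k}\cong A\ltimes W$ with $W\ne 0$ and $A_{\ges 1}\cdot W=0$.

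To produce this decomposition I would invoke the explicit classifications of Koszul homology algebras referenced in the introduction: Avramov-Kustin-Miller for codepth at most $3$; Kustin-Miller and Jacobsson-Kustin-Miller for Gorenstein codepth $4$; Kustin and Palmer for almost complete intersections of codepth $4$ with $\frac12\in Q$; the exterior algebra given by the Koszul complex in the complete intersection case; and Avramov-Kustin-Miller and Kustin-Miller for links of complete intersections. In each classification the algebra $B$ is a trivial extension once one removes the exterior generator of degree $1$ that an embedded deformation would contribute. Ruling out such a generator is precisely what the hypothesis on $Q$ provides, so the criterion of \ref{ch:trivial} applies.

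For case (g), Golod rings, I would instead use \c{S}ega's Poincar\'e series criterion \ref{ch:Sega1}. The defining identity for Golod rings exhibits the polynomial $d(z)=1+z-zP^P_{\wh Q}(z)$ as a common denominator for $P^Q_N(z)$ for every finite $Q$-module $N$. A direct analysis of the roots of $d(z)$, using that its non-constant coefficients are non-positive and that the codepth of $Q$ is at least $2$ under our assumptions by \ref{ch:codepth1}, produces a good factorization: isolate the unique positive real root of smallest modulus as an irreducible linear piece, leaving a factor whose smallest-modulus roots lie off the positive real axis.

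The most delicate case is (h). Here my plan is to invoke the classification of Cohen-Macaulay local rings of multiplicity at most $7$, which, after passing to a suitable faithfully flat extension and reducing modulo a maximal regular sequence contained in $\fm\setminus\fm^2$, allows $Q$ to be matched either against the classes covered by (a)-(g) or against rings for which the Koszul homology structure analyzed by Lyle-Monta\~{n}o is available. The main obstacle will be the delicate bookkeeping needed to preserve both the multiplicity bound and the no-embedded-deformation hypothesis throughout the reductions, with Theorem \ref{thm:short} dispatching the short Hilbert series cases and Proposition \ref{prop:trivialLoc}(2) handling those residual rings whose socle is not contained in $\fm^2$.
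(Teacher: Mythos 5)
Your plan has a genuine gap: the claim that in cases (a)--(f) the absence of an embedded deformation forces $W\ne 0$ in a decomposition $B=\Tor{}{P}{Q}{k}\cong A\ltimes W$. This is false, and the paper's proof is structured precisely around the situations where $W=0$ despite $Q$ having no embedded deformation. For instance, in case (a) with $\edim Q-\depth Q=3$, a ring of type $\mathbf{G}(r)$ can have $W=0$; the paper then invokes \cite{AG} to conclude $Q$ is Gorenstein and appeals to \c Sega's result, not to \ref{ch:trivial}. In case (c) there are four explicit algebra types ($\mathbf{D}^{(2)}$, $\mathbf{E}^{(3)}$, $\mathbf{F}^{(4)}$, $\mathbf{F}^{\star}$) with $W=0$, for which the paper falls back on \ref{ch:Sega1} and exhibits good factorizations of explicit common-denominator polynomials taken from Kustin--Palmer Slattery. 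Case (f) is handled \emph{entirely} via good factorizations: the denominator $(1-z)^m-z$ is analyzed through Selmer's and Ljunggren's irreducibility theorems, with no trivial-extension argument appearing at all. So a "uniform" trivial-extension strategy does not actually cover (a)--(f); roughly half the work in those cases lies in the $W=0$ subcases.

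Case (d) also cannot be treated by your route. For a complete intersection of codimension $c\ge2$, the Tor algebra $B$ is an exterior algebra on $c$ generators of degree $1$, which is \emph{not} of the form $A\ltimes W$ with $W\ne0$ and $A_{\ges1}\cdot W=0$ (degree-one elements multiply nontrivially). The paper's proof of (d) is completely different and much shorter: a complete intersection with no embedded deformation is forced to be regular, and regular rings are trivially $\tor$-friendly. Your proposal never makes this observation, so case (d) as written would fail.

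For case (g) you propose using \ref{ch:Sega1} in place of the paper's citation to Jorgensen. This is a different route and its feasibility is not established by what you write: proving that the Golod denominator $1+z-zP^P_{\wh Q}(z)$ always admits a good factorization is not obvious, and the paper avoids the issue by quoting Jorgensen's direct Tor-friendliness result. If you want to pursue the Poincar\'e series route here you would need to supply that factorization argument. For (h) your sketch is in the right spirit (reduce to the artinian quotient, split by embedding dimension and Hilbert series), but it is too vague to assess, and in particular you do not say how the no-embedded-deformation hypothesis interacts with the passage to $S=Q/(\bsg)$ -- the paper's argument does not need to preserve that hypothesis but instead applies \ref{prop:SegaFriendly}(1), Theorem~\ref{thm:short}, case (b), and Proposition~\ref{prop:trivialLoc}(2) directly to $S$.
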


  \begin{proof}
Cases are tackled one at a time.

\medskip

(b)  This is contained in \cite[2.3]{Se1}.

\medskip

(d)  Since $Q$ is a complete local ring with no embedded deformations, $Q$ is complete intersection
precisely when it is regular. This fact will be used again, later in the proof. It settles (d), for regular rings
are evidently $\tor$-friendly.

\medskip

(g)  This is contained in \cite[3.1]{Jo}; see also \cite[Proposition 5.2]{AINS}.

\medskip

The argument in cases (a), (c), (e), and (f) have a similar structure, which we describe next.
Choosing a minimal Cohen presentation $Q\cong P/I$, form the graded $l$-algebras
$B=\Tor{}PQl$ where $l$ is the residue field of $Q$.  Explicit multiplication tables for these algebras
are available in the literature, but sometimes require additional hypotheses on $l$.  These are
covered by our hypothesis that $l$ is algebraically closed, so we may invoke decompositions
$B=A\ltimes W$.

When $W\ne0$ the ring $Q$ is $\tor$-friendly by \ref{ch:trivial},
which applies because minimal DG algebra resolutions exist, as recalled in~\ref{ch:formal}.

When $W=0$ we utilize the fact that the Poincar\'e series of finite $Q$-modules admit a common
denominator, and that explicit denominators are known in all cases under consideration.
We find good factorizations for those polynomials and apply \c Sega's criterion to conclude
that $Q$ is $\tor$-friendly; see~\ref{ch:Sega1}.

\medskip

(a)  If $\edim Q-\depth Q=0$, then $Q$ is regular and so $\tor$-friendly, so we may assume $\edim Q - \depth Q\ge 1$.
We may further assume $\edim Q-\depth Q\ge2$, because $Q$ cannot to be complete intersection.

If $\edim Q-\depth Q=2$ holds, then $Q$ has to be Golod, and so is covered by (g).

If $\edim Q-\depth Q=3$, then $A$ belongs to one of three types:  See
\cite[2.1]{AKM}, from where we take the names of the types and the decomposition $B=A\ltimes W$.

Type $\mathbf{CI}$ consists of complete intersections, so it does not occur here.

In type $\mathbf{TE}$ the relations $\rank_lA_3=0<1\le\rank_lB_3$ imply $W_1\ne0$.

In type $\mathbf{B}$ the relations $\rank_lA_1=2<3\le\rank_lB_1$ imply $W_1\ne0$.

In type $\mathbf{G}(r)$ the algebra $A$ has Poincar\'e duality in degree $3$.  If $W=0$, then
$Q$ is Gorenstein by \cite[Theorem]{AG}, and so it is $\tor$-friendly by \cite[2.3]{Se1}.

In type $\mathbf{H}(p,q)$ the algebra $A$ is a free module over a subalgebra isomorphic to $k\ltimes\shift k$.
As $Q$ has no embedded deformation, this implies $W\ne0$; see \cite[3.4]{Av:vpd}.

\medskip

(c)  The multiplication table of $A$ belongs to one of $12$ types, which are determined in \cite[1.1]{KP}
and displayed in \cite[Table 1, p.\,275]{KP}; we use the names assigned there and set
$t=\rank_lB_3$.  The ring $Q$ does not belong to type $\mathbf{B}[t]$, $\mathbf{C}[t]$, or
$\mathbf{C}^{\star}$ because \cite[1.2]{KP} shows that rings of these types admit embedded deformations.

In the remaining cases \cite[Table 3, p.\,281]{KP} yields $W\ne0$, unless $Q$ is of type
$\mathbf{D}^{(2)}$ with $t=1$, or $\mathbf{E}^{(3)}$ with $t=1$, or $\mathbf{F}^{(4)}$ with $t=1$, or
$\mathbf{F}^{\star}$ with $t=2$.  From the proof of \cite[4.2]{KP} one sees that for the first three types
a common denominator $d(z)$ can be chosen from \cite[Table 2, p.\,280]{KP}.  A common denominator for
the fourth type is given at the bottom of \cite[p.\,289]{KP}.  Here are the explicit values:
  \begin{alignat*}{2}
  \tag*{$\mathbf{D}^{(2)}$}
(t&=1)\colon\quad
&d(z)&=(1-2z-2z^2+5z^3-2z^4-2z^5+z^6)(1+z)^2
\\
  \tag*{$\mathbf{E}^{(3)}$}
(t&=1)\colon\quad
&d(z)&=(1-2z-2z^2+5z^3-2z^4-4z^5+z^6+z^7)(1+z)^2
\\
  \tag*{$\mathbf{F}^{(4)}$}
(t&=1)\colon\quad
&d(z)&=(1-2z-2z^2+5z^3-2z^4-7z^5+z^6+4z^7-z^9)(1+z)^2
\\
  \tag*{$\mathbf{F}^{\star}$}
(t&=2)\colon\quad
&d(z)&=(1-2z-2z^2+5z^3-3z^4-9z^5+z^6+2z^7-z^8)(1+z)^2\\
     &&&=(1 - 5z + 10z^2 - 11z^3 + 5z^4 - z^5)(1+z)^5
  \end{alignat*}

Let $p(z)$ denote the first factor in the final form of an expression for $d(z)$.
Verifications with \emph{\textsf{Mathematica}} show that $p(z)$ is irreducible over $\BQ$.  Thus, we
get good factorizations $d(z)=p(z)q(z)r(z)$ with $q(z)$ a power of $1+z$ and $r(z)=1$.

\medskip

When discussing the next two cases we set $m=\grade_P Q$.

\medskip

(e)  As $Q$ is not complete intersection, $m\ge2$ holds.  The DG algebra structure on a
minimal free resolution of $Q$ over $P$, constructed in the proof of \cite[4.4]{AKM}, gives
$B=A\ltimes W$ with $\rank_{k}W_i=\binom m{i-1}$ for $i=1,\dots m-1$; in particular, $W\ne0$.

\medskip

(f)  We have $m\ge3$ because otherwise $Q$ is a complete intersection.  It is proved in \cite[6.3 and 5.18]{AKM}
that there exists a common denominator $d(z)$ for Poincar\'e series over $Q$, and it satisfies
$d(z)P^Q_k(z)=(1+z)^e$ with $e=\edim Q$.  By using the expression for $P^Q_k(z)$,
obtained in \cite[Theorem 3]{HS} (see also \cite[2.4]{JKM}), we get
  \[
d(z)=((1-z)^m-z)(1+z)^{e-m}\,.
  \]
Set $f(z)=(1-z)^m-z$ and $q(z)=(1+z)^{e-m}$.

The substitution $y=z-1$ turns $f(z)$ into $g(y)=(-1)^my^m-y-1$.  The polynomial $(-1)^mg(y)$ is factored by Selmer
\cite[Theorem 1]{Sl}; Ljunggren \cite[Theorem 3]{Lj} greatly simplified the proof.  The result is that if $m\not\equiv 5\pmod 6$,
then $(-1)^mg(y)$ is irreducible; else, $(-1)^mg(y)=(y^2+y+1)h(y)$, and $h(y)$ is irreducible.

If $m\not\equiv 5\pmod 6$, set $p(z)=f(z)$ and $r(z)=1$; else, set $p(z)=(-1)^mh(z-1)$ and
$r(z)=z^2-z+1$.  In either case, $d(z)=p(z)q(z)r(z)$ is a good factorization.

\medskip

(h)  Since $k$ is infinite, there is a $Q$-regular sequence $\bsg$ that is linearly independent modulo
$\fq^2$ and such that the ring $S=Q/(\bsg)$
has $\length S=\mult Q$; thus, $\length S\le7$ holds.  By Proposition \ref{prop:SegaFriendly}(1),
it suffices to show that $S$ is $\tor$-friendly. Set $\fn=\fq S$, $e=\edim S$ and $s=\rank_k(\fn^2/\fn^3)$.

If $e\le3$, then $S$ is $\tor$-friendly by the already proved case (a) of the theorem, so we may assume
$e\ge4$.  When $\fn^3=0$ we have $s=\length S-e-1\le2$; this implies $s=0$ or $e^2-4s$ is not the square of an integer, so $S$ is
$\tor$-friendly by Theorem \ref{thm:short}.   When $\fn^3\ne0$ the only possibility is $H_S(z)=1+4z+z^2+z^3$.
If $S$ is Gorenstein, then it is $\tor$-friendly, by case (b).  Else, $(0:\fn)\nsubseteq\fn^2$ holds, so $S$ is
$\tor$-friendly by Proposition \ref{prop:trivialLoc}(2).
  \end{proof}

  \begin{chunk}\textit{Proof of Theorem} \ref{thm:mainTor}.
    \label{ch:PfmainTor}
In view of Proposition \ref{prop:descent}, it suffices to show that $R'$ is $\tor$-persistent.  Thus, for
the rest of the proof we assume $R'=R$.

Due to Lemma \ref{lem:nodef}, we may assume that $Q$ is complete, has no embedded
deformation, and its residue field is algebraically closed.  Now Lemma \ref{lem:friendly} shows
that $Q$ is $\tor$-friendly.  In view of Theorem~\ref{thm:rigidT}, the desired assertion will
follow once we show that there is no finite $Q$-module $L$ with $\cx_QL=1$.

In case (d) this is evident:  Complete intersections with no embedded deformations are regular,
so $\cx_QL=0$.  When $Q$ is Golod and not complete intersection, Lescot \cite[6.5]{Le2}
proved that $\pdim_QM=\infty$ implies $\beta^Q_n(M)<\beta^Q_{n+1}(M)$ for $n\gg0$,
and this settles case (g); see also \cite[5.3.3]{Av:barca}.  In case (h) the same conclusion was
obtained by Gasharov and Peeva \cite[1.1]{GP}, provided $\edim Q-\depth Q\ge4$; when
$\edim Q-\depth Q\le3$ holds case (a) applies; it is treated below.

In the remaining cases we argue by contradiction.  Assume that there exists a finite $Q$-module $L$
with $\cx_QL=1$.  Replacing it with a sufficiently high syzygy module, we may further obtain
$\depth_QL=\depth Q$; see \cite[1.2.8]{Av:barca}.  From \cite[1.6.II]{Av:msri} we see that in cases (a),
(b), (e), or (f) the virtual projective dimension of $L$ (defined in \cite[3.3]{Av:vpd}) is equal to $1$; by
\cite[5.2(3)]{KP} the same conclusion holds in case~(c) as well.  Since $Q$ is complete with
infinite residue field, \cite[3.4(c)]{Av:vpd} yields a deformation $Q\twoheadleftarrow P$ with
$\edim P=\edim Q$ and $\pdim_{P}L=1$.  Since $Q$ has no embedded deformation we must have
$P=Q$.  This gives $\pdim_{Q}L=1$, and hence $\cx_QL=0$.  We have produced the desired contradiction,
so the proof of the theorem is complete.
  \qed
  \end{chunk}

\begin{chunk}
\label{ch:examples}
The preceding result gives a direction to a search for rings that may not be Tor-persistent. Naturally, the first example to check is the artinian ring  in \cite[3.4]{GaP} (reproduced in \cite[5.1.4]{Av:barca}), which has embedding dimension 4 and length 8. However its  Hilbert series is $1+4t+3t^2$  and so Theorem~\ref{thm:short}(2) yields that the ring is Tor-persistent. In fact, any local ring $(R,\fm,k)$ with $\fm^3=0$ is Tor-persistent; this is proved in \cite{LW}.
\end{chunk}


\section{Cohomological persistence}
 \label{sec:Vanishing of Ext}
In this section we explore the cohomology analogue of Tor-persistence.  Throughout, $(R,\fm,k)$ will be a local ring.

\begin{chunk}
  \label{ch:epersistent}
We say that $R$ is \emph{cohomologically persistent}, or \emph{$\ext$-persistent}, if every $R$-complex $M$ for which $\hh M$ is finite and $\Ext{}RMM$ is bounded is either perfect or quasi-isomorphic to a bounded complex of injective $R$-modules.
  \end{chunk}

In contrast with $\tor$-persistence, not every ring can be $\ext$-persistent.

\begin{chunk}
\label{ch:sd}
An $R$-complex $M$ is \emph{semidualizing} if $\hh M$ is finite and the canonical morphism $R\to \Ext{}RMM$ is bijective; equivalently, $\Ext iRMM=0$ for $i\ne 0$ and there is an isomorphism $\Ext 0RMM\cong R$.

The ring $R$ itself, viewed as an $R$-module is semidualizing, as is any dualizing complex for $R$. There exist rings that admit semidualizing complexes besides these obvious ones; see, for example, \cite[7.8]{LWC}. Such a ring is not $\ext$-persistent.
\end{chunk}

The next result contains the analogue of Proposition~\ref{prop:descent} and Theorem~\ref{thm:rigidT} for Ext-persistence; unlike in the latter, there are no additional hypothesis on $Q$.

    \begin{theorem}
  \label{thm:mainExt}
Assume that there exist a local homomorphism $R\to R'$ of finite flat dimension and a deformation
$R'\twoheadleftarrow Q$.

If $Q$ is $\ext$-persistent, then so is $R$.
    \end{theorem}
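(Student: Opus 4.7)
The approach mirrors the proof strategies of Proposition~\ref{prop:descent} and Theorem~\ref{thm:rigidT}, separating the assertion into two independent steps: descent of Ext-persistence along the finite-flat-dimension map $R\to R'$, and ascent of Ext-persistence across the deformation $R'\twoheadleftarrow Q$. For the descent, given an $R$-complex $M$ with $\hh M$ finite and $\Ext{}RMM$ bounded, I would form the base change $M' = R'\otimes_R^L M$ over $R'$. Finite flat dimension of $R\to R'$ forces $\hh{M'}$ to be finite, and the hypothesis $\Ext{}RMM$ bounded transfers to $\Ext{}{R'}{M'}{M'}$ bounded via tensor-evaluation (valid because $\hh M$ is finitely generated and $R\to R'$ has finite flat dimension). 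Ext-persistence of $R'$ then yields that $M'$ is perfect over $R'$ or has finite $R'$-injective dimension; descent to $M$ over $R$ follows the template of Proposition~\ref{prop:descent}, using a $\Tor{}RkM$-computation in the perfect case and a dual $\Ext{}RkM$-argument invoking Bass's criterion in the finite-injective-dimension case.

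For the ascent, write $R'=Q/(\bsf)$ with $\bsf=f_1,\dots,f_c$ a $Q$-regular sequence. Since $\bsf$ annihilates $M$, one has $\Ext qQ{R'}M\cong M^{\binom cq}$, and the associated change-of-rings spectral sequence $E_2^{pq}=\Ext p{R'}MM^{\binom cq}\Rightarrow \Ext{p+q}QMM$ converts boundedness of $\Ext{}{R'}MM$ into that of $\Ext{}QMM$. Ext-persistence of $Q$ then gives $\pdim_QM<\infty$ or $\idim_QM<\infty$. In the latter case, a bounded $Q$-injective coresolution $J$ of $M$ yields $\Hom_Q(R',J)$, a bounded complex of $R'$-injectives (the coinduction $\Hom_Q(R',-)$ preserves injectives as right adjoint of the exact restriction functor); in the former, a bounded $Q$-projective resolution $P$ of $M$ yields $P\otimes_Q R'$, a perfect $R'$-complex. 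Both have homology $M^{\binom ci}$ in degree $i$ by Koszul. The final task is to extract from either complex the conclusion that $M$ itself is perfect or has finite $R'$-injective dimension.

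The main obstacle is this last extraction. Although the chain-level Koszul decompositions $\bigoplus_i M[\pm i]^{\binom ci}$ (with zero differentials since $\bsf M=0$) realize the two derived objects as direct sums in $D(Q)$, this splitting does not lift to $D(R')$: the strict $R'$-action on the Koszul model differs from the $R'$-action arising from the $R'$-structure of $R'$ in the derived Hom-complex, so $M$ is not a direct summand of an $R'$-perfect or $R'$-bounded-injective complex in $D(R')$ by a naive argument. Extraction must then use the hypothesis $\Ext{}{R'}MM$ bounded substantively, via the $R'[\bschi]$-module structure of~\ref{ch:AGP}, the cohomological support variety $V(Q,\bsf,M)$ of~\ref{ch:variety}, and the Postnikov triangles of the bounded representative; presumably the duality $\Ext i{R'}MR'\cong \Ext{i+c}QMQ$ when $\pdim_QM<\infty$ (arising from $\Ext cQ{R'}Q\cong R'$) provides extra leverage in that case, which is why --- unlike Theorem~\ref{thm:rigidT} --- no auxiliary hypothesis on $Q$-modules with constant Betti numbers is required.
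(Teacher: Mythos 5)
Your overall architecture is correct and matches the paper: first reduce along $R\to R'$ (your descent step corresponds to the paper's use of Proposition~\ref{pr:basechange}), then use the change-of-rings spectral sequence of~\ref{ch:Extss} to transfer boundedness of $\Ext{}{R'}MM$ to $\Ext{}QMM$ and invoke Ext-persistence of $Q$ to get $\pdim_QM<\infty$ or $\idim_QM<\infty$. But your proposal stops precisely where the real work begins: you explicitly flag the ``extraction'' step --- passing from finiteness of homological dimension over $Q$ back to finiteness over $R'$ --- as an unresolved obstacle, and your closing paragraph is speculation (``must then use \dots presumably'') rather than a proof. This is a genuine gap, and the ingredients you float (cohomological support varieties, Postnikov triangles, the duality $\Ext i{R'}M{R'}\cong\Ext{i+c}QMQ$) are not what actually closes it.

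The missing lemma is Proposition~\ref{prop:ext} in the paper: if $R'\twoheadleftarrow Q$ is a deformation, $\hh U$ is finite, and $\Ext{2j}{R'}UU=0$ for some $j>0$, then $U$ perfect over $Q$ forces $U$ perfect over $R'$, and $U$ of finite injective dimension over $Q$ forces the same over $R'$. The proof is short and purely about the $R'[\bschi]$-structure of~\ref{ch:AGP}, which was your correct guess; the rest of your list is a red herring. Since $\Ext{}Q{U}{U}$ is bounded, $\Ext{}{R'}kU$ is finitely generated over $R'[\bschi]$, say in degrees $\le m$. For $i\ge m+2j$ every class in $\Ext{i}{R'}kU$ is a sum of terms $(\bschi)^h\xi$ with $h\ge j$; factoring out $j$ copies of $\bschi$ and noting that each $\chi_\ell$ acts through $\zeta_U(\chi_\ell)\in\Ext 2{R'}UU$, one lands these terms inside $\Ext{2j}{R'}UU\circ\Ext{i-2j}{R'}kU=0$. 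Hence $\Ext{\gg0}{R'}kU=0$, which gives finite injective dimension over $R'$ by \cite[5.5(I)]{AF}; the projective-dimension case is symmetric using $\Ext{}{R'}Uk$. Note also that your explanation for why no ``constant Betti number'' hypothesis is needed here (unlike Theorem~\ref{thm:rigidT}) is off: it is not the Grothendieck duality that saves the day, but the fact that $\Ext{}{R'}UU$ carries a ring structure that $\Tor{}{R'}UU$ lacks, which is exactly what makes the Proposition~\ref{prop:ext} argument possible.
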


  \begin{corollary}
    \label{cor:mainExt}
If $R$ satisfies the hypotheses of Theorem \emph{\ref{thm:mainTor}}, then it is $\ext$-persistent. In particular, such an $R$ has no nontrivial semidualizing complexes.
  \end{corollary}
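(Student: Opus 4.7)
The plan is to reduce the claim to Ext-persistence of the deformation $Q$ via Theorem~\ref{thm:mainExt}, and then to establish Ext-persistence for each ring $Q$ in the list (a)--(h) by adapting the machinery already used to prove Theorem~\ref{thm:mainTor}. First, Theorem~\ref{thm:mainExt} reduces the question to showing that $Q$ is Ext-persistent. Then, paralleling Lemma~\ref{lem:nodef}, one reduces further to the case where $Q$ is complete, has an algebraically closed residue field, and admits no embedded deformation: the inflation step and the factorization through a maximal embedded deformation carry over verbatim, with Theorem~\ref{thm:mainExt} replacing Proposition~\ref{prop:descent} in the descent argument, and with the structural conditions (a)--(h) preserved by the same citations as in the original reduction.

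With these reductions in place, I would go case-by-case following the pattern of Lemma~\ref{lem:friendly}. The two principal mechanisms there---the trivial extension decomposition $B=A\ltimes W$ from \ref{ch:trivial}, and \c{S}ega's good-factorization criterion in \ref{ch:Sega1}---admit natural cohomological analogs. The DG algebra/trivial-extension argument from \cite{AINS} is formulated so as to produce vanishing statements that control $\Ext{}RMM$ directly, not only $\Tor{}RMN$; likewise, \c{S}ega's criterion controls the growth of Betti numbers via a common denominator for Poincar\'e series, and this growth governs boundedness of self-Ext just as well as of self-Tor. The remaining cases (d), (g), and (h) are handled by the already-noted fact that complete intersections with no embedded deformation are regular, by Jorgensen's rigidity theorem in the Golod case, and by Proposition~\ref{prop:SegaFriendly}(1) together with the small-embedding-dimension or small-multiplicity analysis mimicking the last paragraph of the proof of Lemma~\ref{lem:friendly}.

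The second assertion is then a formal consequence. If $M$ is a semidualizing $R$-complex, then by \ref{ch:sd} the $R$-complex $\Ext{}RMM$ is concentrated in degree zero and isomorphic to $R$, hence bounded, while $\hh M$ is finite. Ext-persistence forces $M$ to be either perfect or quasi-isomorphic to a bounded complex of injective modules. A semidualizing complex of finite projective dimension is isomorphic in the derived category to a shift of $R$, and one of finite injective dimension is a dualizing complex; either outcome is trivial, so $R$ admits no nontrivial semidualizing complex.

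The main obstacle is the per-case verification that the inputs to Lemma~\ref{lem:friendly} in fact yield Ext-persistence and not merely Tor-friendliness. In most cases this follows because the structural tools---DG algebra resolutions, trivial extension decompositions of Koszul homology, explicit common denominators for Poincar\'e series---encode enough information to bound $\Ext{}RMM$ in tandem with $\Tor{}RMN$. The cases (c), (e), and (f), which rely on delicate structure theorems for the Koszul homology algebra, will require the most careful translation, particularly to ensure that the irreducibility arguments behind \c{S}ega's criterion continue to apply once one is working with cohomological Betti numbers rather than homological ones.
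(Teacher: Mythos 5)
Your overall skeleton is reasonable, but you have missed the single observation that makes this corollary essentially immediate, and in exchange you have signed up for a large amount of unverified work. The paper does not re-prove any of the case analysis of Lemma~\ref{lem:friendly} in cohomological terms. Instead it uses Proposition~\ref{prop:friendly-ext}, which says that $\tor$-friendliness of a local ring is \emph{equivalent} to a cohomological condition that implies $\ext$-persistence. That equivalence is a purely formal consequence of Matlis duality: for the injective hull $E$ of the residue field one has $\Hom_R(\Tor nRUV,E)\cong\Ext nRU{\Hom_R(V,E)}$, and when $\hh V$ has finite length, $V\mapsto\Hom_R(V,E)$ interchanges ``perfect'' with ``finite injective dimension.'' Once you know $Q$ is $\tor$-friendly --- which Lemmas~\ref{lem:nodef} and~\ref{lem:friendly} already give you, and which you would not need to re-derive --- you get for free that $Q$ is $\ext$-persistent, and then Theorem~\ref{thm:mainExt} finishes the argument.

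By contrast, your plan to ``translate'' each of the mechanisms in Lemma~\ref{lem:friendly} to a cohomological setting is not just more work, it is not something you have actually established. You assert that the trivial-extension argument of \cite{AINS} and \c{S}ega's good-factorization criterion ``admit natural cohomological analogs'' and that ``this growth governs boundedness of self-Ext just as well as of self-Tor,'' but these are claims that would need proof; as written they are not verifications. In particular, nothing in your proposal explains \emph{why} a common denominator for Poincar\'e series, which a priori controls $\tor$ via minimal free resolutions, should constrain $\Ext{}RMM$ directly, short of going through a duality argument --- which is precisely the step you omitted. You also propose to redo the reduction of Lemma~\ref{lem:nodef} for $\ext$-persistence, but this is unnecessary: the reduction already produces a $\tor$-friendly $Q$, and the duality step converts that output directly. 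Your treatment of the second assertion (nontrivial semidualizing complexes) is fine and matches the observation already recorded in \ref{ch:sd}.
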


The proofs are given in \ref{pf:mainExt} and \ref{pf:mainExtc}, respectively, following some preparation.

The next result complements Proposition~\ref{prop:friendly}. It follows from the implication (i)$\implies$(ii) that $\tor$-friendly rings are $\ext$-persistent.

 \begin{proposition}
  \label{prop:friendly-ext}
The following conditions on a local ring $R$ are equivalent.
  \begin{enumerate}[\quad\rm(i)]
 \item
The ring $R$ is $\tor$-friendly.
 \item
If $U$ and $V$ are $R$-complexes, such that $\hh U$ and $\hh V$ are finite and $\Ext{}RUV$ is bounded, then
$U$ is perfect or $V$ is quasi-isomorphic to a bounded complex of injective $R$-modules.
 \item
If $U$ and $V$ are $R$-complexes, such that $\hh U$ and $\hh V$ have finite length and $\Ext{}RUV$ is bounded, then
$U$ is perfect or $V$ is quasi-isomorphic to a bounded complex of injective $R$-modules.
  \end{enumerate}
  \end{proposition}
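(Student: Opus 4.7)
The implication (ii)$\implies$(iii) is tautological, so I plan to close the loop by proving (i)$\Leftrightarrow$(iii) via Matlis duality and Proposition~\ref{prop:friendly}(iii), then (iii)$\implies$(ii) by a Koszul complex reduction in the spirit of the proof of Proposition~\ref{pr:persistent}, (iii)$\implies$(ii).

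For (i)$\Leftrightarrow$(iii), let $E$ be the injective hull of $k$ and write $(-)^\vee = \Hom R - E$. Whenever $\hh U$ is finite, $U$ admits a semi-projective resolution $P$ by finite free modules, and Hom--tensor adjunction, combined with the injectivity of $E$, yields natural isomorphisms
\[
(\Tor iRUV)^\vee \cong \Ext iR U{V^\vee}\,,
\]
so $\Tor{}RUV$ is bounded iff $\Ext{}RU{V^\vee}$ is. If in addition $\hh V$ has finite length, then $\HH n{V^\vee}=(\HH{-n}V)^\vee$ shows $\hh{V^\vee}$ has finite length, and the classical biduality $N^{\vee\vee}\cong N$ for finite length modules implies the natural map $V \to V^{\vee\vee}$ is a quasi-isomorphism. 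Finally, for such $V$ one has the key equivalence that $V$ has finite injective dimension if and only if $V^\vee$ is perfect: in the forward direction, dualize a bounded injective resolution of $V$ to get a bounded complex of flat modules with finite homology, which is perfect; for the converse, dualize a bounded finite free resolution of $V^\vee$ to obtain a bounded injective complex quasi-isomorphic to $V^{\vee\vee}\simeq V$. Combining these three ingredients with Proposition~\ref{prop:friendly}(iii) yields (i)$\Leftrightarrow$(iii), by replacing $V$ with $V^\vee$ (or vice versa) in the hypothesis.

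For (iii)$\implies$(ii), let $\bsx=x_1,\dots,x_d$ generate $\fm$, let $K$ denote the Koszul complex on $\bsx$, and given $U$, $V$ with $\hh U$, $\hh V$ finite and $\Ext{}RUV$ bounded, set $U'=K\otimes_R U$ and $V'=K\otimes_R V$. Since $\fm$ annihilates the homology of $K\otimes_R W$ for any $R$-complex $W$, the modules $\hh{U'}$ and $\hh{V'}$ have finite length.  Using the self-duality $\Hom R K R \cong \shift^{-d} K$ together with Hom--tensor adjunction, one obtains
\[
\Ext{}R{U'}{V'}\cong \Ext{}R U{(\shift^{-d} K \otimes_R K)\otimes_R V}\,,
\]
and since $\shift^{-d} K\otimes_R K$ is a bounded complex of finite free $R$-modules, the right-hand side is bounded whenever $\Ext{}RUV$ is. Hypothesis (iii) then gives that $U'$ is perfect or $V'$ has finite injective dimension.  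It remains to observe that $U'$ is perfect iff $U$ is, and $V'$ has finite injective dimension iff $V$ does.  Both assertions follow from the fact that $k\otimes_R K$ has zero differentials: a K\"unneth argument identifies $\Tor{}Rk{U'}$ with a finite sum of shifts of $\Tor{}RkU$, so perfection transfers by \ref{ch:perfect}; and a similar computation, using the self-duality of $K$ to pass from $K\otimes_R V$ to $\Hom R K V$, identifies $\Ext{}R k{V'}$ with a finite sum of shifts of $\Ext{}RkV$, so finite injective dimension transfers by the complex version of Bass's criterion.

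The main obstacle in the plan is verifying ``$V$ has finite injective dimension if and only if $V^\vee$ is perfect'' for complexes with finite length homology.  One needs $(-)^\vee$ to commute with both homology and the relevant resolutions, and biduality $V \simeq V^{\vee\vee}$ must hold at the level of complexes; the first is automatic from injectivity of $E$, while turning a bounded flat model of $V^\vee$ into a \emph{finite} free one uses that $\hh{V^\vee}$ is finite.  The Koszul reduction, while routine, likewise relies on careful tracking of the self-duality and K\"unneth isomorphisms.
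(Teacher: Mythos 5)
Your proof is correct and takes essentially the same route as the paper: Matlis duality together with Proposition~\ref{prop:friendly}(iii) gives (i)$\Leftrightarrow$(iii), and the paper likewise leaves (iii)$\Rightarrow$(ii) to a Koszul-complex reduction ``similar to the one for the corresponding implication in Proposition~\ref{prop:friendly}'', which is exactly what you carry out via the self-duality $\Hom_R(K,R)\cong\shift^{-d}K$. Your more explicit treatment of the injective-dimension transfer under $K\otimes_R-$ and of the equivalence ``$V$ has finite injective dimension iff $V^\vee$ is perfect'' fills in details the paper records only as assertion (c) of its Matlis-duality list, but the underlying argument is the same.
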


  \begin{proof}
The implication (ii)$\implies$(iii) is a tautology, while (iii)$\implies$(ii) is verified by an argument
similar to the one for the corresponding implication in Proposition~\ref{prop:friendly}.

Let $E$ be the injective hull of the residue field of $R$. The standard isomorphisms
\[
\Hom_{R}(\Tor nRUV,E) \cong \Ext{n}RU{\Hom_{R}(V,E)}
  \quad\text{for}\quad
n\in\BZ
\]
show that $\Tor{}RUV$ is bounded if and only if $\Ext{}RU{\Hom_{R}(V,E)}$ is bounded.
When $\hh V$ has finite length, Matlis duality yields the following assertions:
\begin{enumerate}[\quad\rm(a)]
\item
The length of $\hh{\Hom_{R}(V,E)}$ is finite.
\item
The canonical map $V\to \Hom_{R}(\Hom_{R}(V,E),E)$ is an quasi-isomorphism.
\item
$V$ is perfect if, and only if, $\Hom_{R}(V,E)$ is quasi-isomorphic to a bounded complex of injective $R$-modules.
\end{enumerate}
In view of these properties, condition (iii) above is equivalent to the corresponding condition in Proposition~\ref{prop:friendly},
so that result gives (iii)$\iff$(i).
  \end{proof}

  \begin{chunk}
    \label{ch:Extss}
    Let $R=Q/(\bsf)$ where $Q$ is a local ring and $\bsf$ is a $Q$-regular set.

If $\Ext{}RMN$ is bounded, then so is $\Ext{}QMN$.

This follows from the standard change-of-rings spectral sequence
  \begin{align*}
\CE 2pq=\Ext pRM{\Ext qQRN}\implies\Ext{p+q}QMN
  \end{align*}
Indeed, resolving $R$ over $Q$ by means of the Koszul complex on $\bsf$ one gets an
isomorphisms $\Ext qQRN\cong N^{\binom cq}$ for every integer $q$, whence $\CE 2pq\cong\Ext pRMN^{\binom cq}$.
    \end{chunk}

The statement about projective dimensions in the following result is equivalent to \cite[4.2]{AB}, where
the proof relies on minimal free resolutions.  The argument given below works equally well for
injective dimension and for projective dimension.

  \begin{proposition}
    \label{prop:ext}
Let $R\twoheadleftarrow Q$ be a deformation and $U$ an $R$-complex with $\hh U$ finite,
such that $\Ext{2j}RUU=0$ for some positive integer $j$.
  \begin{enumerate}[\rm(1)]
    \item
If $U$ is perfect over $Q$, then it is perfect over $R$.
    \item
If $U$ is quasi-isomorphic to a bounded complex of injective $Q$-modules,
then it is quasi-isomorphic to a bounded complex of injective $R$-modules.
  \end{enumerate}
  \end{proposition}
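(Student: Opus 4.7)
The plan is to prove both parts in parallel by establishing that an appropriate Ext-module over $R$ is bounded. The key machinery is the central $R[\bschi]$-action on Ext described in \ref{ch:AGP}, combined with the observation that the single vanishing $\Ext{2j}{R}{U}{U}=0$ forces $\zeta_U$ to factor through a bounded quotient of $R[\bschi]$.

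First I would verify that $\Ext{}{Q}{U}{U}$ is bounded under either hypothesis: perfectness over $Q$ gives $\Ext{n}{Q}{U}{V}=0$ for $n\gg 0$ and every $V$, while finite injective dimension over $Q$ gives $\Ext{n}{Q}{V}{U}=0$ for $n\gg 0$ and every $V$. By \ref{ch:AGP}, this implies that $\Ext{}{R}{U}{U}$ is finitely generated as a graded $R[\bschi]$-module; the same input shows that $\Ext{}{R}{U}{k}$ is finitely generated over $R[\bschi]$ in case (1), and that $\Ext{}{R}{k}{U}$ is finitely generated over $R[\bschi]$ in case (2).

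Next comes the core observation. Because $\zeta_U\colon R[\bschi]\to\Ext{}{R}{U}{U}$ is a homomorphism of graded $R$-algebras and $\Ext{2j}{R}{U}{U}=0$, the entire degree-$2j$ piece $R[\bschi]^{2j}$ lies in $\ker\zeta_U$; since this piece generates the ideal $(\chi_1,\dots,\chi_c)^j$ as an $R[\bschi]$-module, I conclude that $\zeta_U$ factors through the quotient $R[\bschi]/(\chi_1,\dots,\chi_c)^j$. This quotient is concentrated in degrees $0,2,\dots,2(j-1)$ and so is bounded as a graded ring. The compatibility identity $\zeta_V(\rho)\circ\xi=\xi\circ\zeta_U(\rho)$ from \ref{ch:AGP} then shows that the $R[\bschi]$-action on $\Ext{}{R}{U}{V}$ and on $\Ext{}{R}{V}{U}$ factors through $\zeta_U$ for every $V$.

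Combining these ingredients: in case (1), $\Ext{}{R}{U}{k}$ is a finitely generated graded module over the bounded ring $R[\bschi]/(\chi_1,\dots,\chi_c)^j$ and is therefore itself bounded, giving $\pdim_RU<\infty$ and hence perfectness of $U$ over $R$. In case (2), the same reasoning applied to $\Ext{}{R}{k}{U}$ gives $\idim_RU<\infty$, and since $\hh U$ is bounded this yields a quasi-isomorphism between $U$ and a bounded complex of injective $R$-modules. The only genuine obstacle is the conceptual step in the core observation: recognizing that a single even-degree vanishing propagates to the vanishing of an entire ideal in $R[\bschi]$ because $\zeta_U$ is multiplicative; once this is identified, everything else is formal manipulation of the $R[\bschi]$-module structure.
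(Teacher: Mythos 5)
Your proof is correct and follows essentially the same approach as the paper: both exploit the finitely generated $R[\bschi]$-module structure on $\Ext{}RUk$ (resp.\ $\Ext{}RkU$) supplied by \ref{ch:AGP}, together with the observation that $\Ext{2j}RUU=0$ forces the ideal $(\chi_1,\dots,\chi_c)^j$ to act as zero. The paper spells this out as an explicit containment $\Ext{i}RkU\subseteq\Ext{2j}RUU\circ\Ext{i-2j}RkU=0$ for $i$ above the top generating degree plus $2j$, which is exactly your structural repackaging that $\zeta_U$ factors through the bounded quotient $R[\bschi]/(\chi_1,\dots,\chi_c)^j$.
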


  \begin{proof}
(2)  Let $\{\xi_1,\dots,\xi_s\}$ be a set of generators of $\Ext{}RkU$ as graded module over $R[\bschi]$;
see \ref{ch:AGP}.  For $m=\max\{\deg(\xi_r)\}_{1\les r\les s}$ and each $i\ge m+2j$ we then have
  \begin{align*}
\Ext{i}RkU
&=\sum_{h\ges j}(\bschi)^{h}\Ext{i-2h}RkU \\
&\subseteq\sum_{h\ges j}(\Ext{2}RUU)^j(\bschi)^{h-j}\Ext{i-2h}RkU \\
&\subseteq\sum_{h\ges j}\Ext{2j}RUU\circ\Ext{i-2j}RkU
  \end{align*}
Thus, $\Ext{2j}RUU=0$ implies $\Ext {\gg0}RkU=0$, so by \cite[5.5(I)]{AF} it is quasi-isomorphic to a
bounded complex of injective $R$-modules.

(1) One uses the graded bimodule $\Ext{}RUk$ and argues as above.
  \end{proof}

\begin{proposition}
\label{pr:basechange}
Let $R\to S$ be a local homomorphism of finite flat dimension and $F$ a semifree $R$-complex with $\hh F$ finite.

If the $S$-complex $S\otimes_{R}F$ is perfect, then so is $F$.

If the $S$-complex $S\otimes_{R}F$ has finite injective dimension, then so does $F$.
\end{proposition}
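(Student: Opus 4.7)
The key identity is $l\otimes_S(S\otimes_R F)=l\otimes_R F=l\otimes_k(k\otimes_R F)$, available because $l$ is an $R$-module via $R\to S\to l$. Since $F$ is semifree over $R$, the complex $S\otimes_R F$ consists of free $S$-modules, so the left side computes $\Tor{}Sl{S\otimes_R F}$, while the right side equals $l\otimes_k\Tor{}R k F$ by flatness of $l$ over $k$. Boundedness of $\Tor{}Sl{S\otimes_R F}$---which holds because $S\otimes_R F$ is perfect, by~\ref{ch:perfect}---therefore forces boundedness of $\Tor{}R k F$, and \ref{ch:perfect} again, together with finiteness of $\hh F$, yields that $F$ is perfect over $R$.

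\textbf{Part 2.} The plan has two stages. First, show $\Ext{}R k{S\otimes_R F}$ is bounded via the change-of-rings spectral sequence
\[
\CE 2pq=\Ext pS{\Tor qRkS}{S\otimes_R F}\Longrightarrow\Ext{p+q}R k{S\otimes_R F}.
\]
Finite flat dimension of $R\to S$ bounds $q$ to the range $[0,\fdim_R S]$, each $\Tor qRkS$ is a finitely generated $S$-module, and finite injective dimension of $S\otimes_R F$ over $S$ bounds $\Ext{}S M{S\otimes_R F}$ in $p$ for every finitely generated $S$-module $M$. Hence each column of the spectral sequence is bounded, and so is the abutment $\Ext{}R k{S\otimes_R F}$. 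Second, descend from bounded $\Ext{}R k{S\otimes_R F}$ to bounded $\Ext{}R k F$: reduce first to the complete case via faithful flatness of $R\to\wh R$ and $S\to\wh S$ (which preserves and reflects finite injective dimension for complexes with finite homology), then choose a Cohen factorization $R\to R'\to\wh S$ with $R\to R'$ flat of regular closed fiber and $R'\to\wh S$ a deformation by a regular sequence $\bsf=f_1,\dots,f_c$; such a factorization is available for homomorphisms of finite flat dimension by the work of Avramov--Foxby--Halperin. By flat descent along $R\to R'$ it suffices to show $R'\otimes_R F$ has finite injective dimension over $R'$, and an induction on $c$ reduces this to the single-element step: for $V$ semifree over a local ring $A$, $f\in A$ regular, and $V/fV$ of finite injective dimension over $A/(f)$, $V$ itself has finite injective dimension over $A$. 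To prove the single step, apply $\Ext{}Ak{-}$ to $0\to V\xra f V\to V/fV\to 0$; since $f$ acts as zero on $\Ext{}AkV$, the long exact sequence collapses into short exacts $0\to\Ext nAkV\to\Ext nAk{V/fV}\to\Ext{n+1}AkV\to 0$, and a two-row change-of-rings spectral sequence for $A\to A/(f)$ transfers boundedness of $\Ext{}{A/(f)}k{V/fV}$ to $\Ext{}Ak{V/fV}$.

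\textbf{Main obstacle.} The delicate ingredient is the Cohen factorization with \emph{deformation} tail for a finite flat dimension homomorphism, since not every surjection of finite projective dimension has a regular-sequence kernel; securing this factorization is where the finite-flat-dimension hypothesis is essentially used. Once it is in hand, the spectral-sequence and long-exact-sequence bookkeeping are routine.
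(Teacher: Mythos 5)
Your Part~1 is correct and works, though it takes a slightly different route from the paper: you apply the homological characterization of perfection from \ref{ch:perfect} by computing $\Tor{}R{k}{F}$ via $l\otimes_k(k\otimes_R F)\cong l\otimes_S(S\otimes_RF)$, whereas the paper simply observes that a minimal $F$ stays minimal after the local base change $-\otimes_RS$, so that perfection of $S\otimes_RF$ forces $F_i=0$ for $|i|\gg0$. Both are fine.

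Part~2 has a genuine gap. You reduce to the complete case and then invoke a ``Cohen factorization $R\to R'\to\wh S$ with $R\to R'$ flat of regular closed fiber and $R'\to\wh S$ a deformation by a regular sequence,'' asserting that such a factorization ``is available for homomorphisms of finite flat dimension.'' That is false. A Cohen factorization always exists, and if $\fdim_R S<\infty$ then $\pdim_{R'}\wh S<\infty$; but a surjection of finite projective dimension need not have kernel generated by a regular sequence. For instance, with $R=k[[x,y]]$ and $S=R/\fm^2$, the identity factorization $R\to R\to S$ is a Cohen factorization with $\pdim_RS=2$, yet $\fm^2$ is not a regular sequence. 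The property you need is that the homomorphism be \emph{complete intersection} in the sense of Avramov; finite flat dimension does not imply it. Since your induction on the length of the regular sequence is the engine of the whole argument, and that engine never starts, the proof does not go through. (Your Stage~1 spectral-sequence computation of $\Ext{}Rk{S\otimes_RF}$ is fine as far as it goes, but it is never actually consumed by Stage~2.)

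For comparison, the paper avoids any factorization of $R\to S$. With $E$ a finite free resolution of $k$, $I$ a semiinjective resolution of $F$, and $G$ a finite flat resolution of $S$ over $R$, it exhibits quasi-isomorphisms identifying $\Hom_R(E,I)\otimes_RG$ with $\Hom_S(S\otimes_RE,\,S\otimes_RF)$, whose homology is bounded because $\hh{S\otimes_RE}\cong\Tor{}RSk$ is bounded and $S\otimes_RF$ has finite injective dimension. The decisive input is then the Amplitude Inequality of Foxby--Iyengar (\cite[3.1]{FI}; see also \cite[5.12]{DGI}): since $\hh{\Hom_R(E,I)}\cong\Ext{}RkF$ is degreewise finite and $G$ has finite flat dimension with nonzero homology, boundedness of $\hh{\Hom_R(E,I)\otimes_RG}$ forces boundedness of $\Ext{}RkF$, hence $\idim_RF<\infty$ by \cite[5.5(I)]{AF}. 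That amplitude result is exactly the nontrivial homological input your approach is missing; it is the tool that lets one descend finiteness across a finite flat dimension map without assuming any complete intersection structure.
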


\begin{proof}
We may assume that the $R$-complex $F$ is minimal, that is to say, $\partial(F)\subseteq \fm F$, where $\fm$ is the maximal ideal of $R$. Then $F$ is perfect if and only if $F_{i}=0$ for $|i|=0$ for $i\gg 0$. It is evident that the $S$-complex $S\otimes_{R}F$ is minimal, so it follows that when it is perfect so is $F$.

In the remainder of the proof, let $k$ denote the the residue field of $R$ and $E$ its resolution by finite free $R$-modules. Let $I$ be a semiinjective resolution of $F$ and $G$ a finite resolution of $S$ by flat $R$-modules. One then has the following quasi-isomorphisms of $R$-complexes:
\[
G\otimes_{R}I \xla{\ \simeq\ } G\otimes_{R} F \xra{\ \simeq\ } S\otimes_{R} F\,.
\]
These induce the quasi-isomorphisms below
\begin{align*}
\Hom_{R}(E,I)\otimes_{R}G
	& \cong \Hom_{R}(E,G\otimes_{R}I) \\
	& \simeq \Hom_{R}(E,G\otimes_{R} F) \\
        & \simeq \Hom_{R}(E,S\otimes_{R} F) \\
        & \cong \Hom_{S}(S\otimes_{R}E,{S\otimes_{R}F})
\end{align*}
The isomorphisms are standard. Note that $\hh{S\otimes_{R}E}$ is bounded, for it is isomorphic to $\Tor{}RSk$ and the flat dimension of $S$ over $R$ is finite. The finiteness of the injective dimension of $S\otimes_{R}F$ thus implies that the homology of the complex $\Hom_{S}(S\otimes_{R}E,{S\otimes_{R}F})$ is bounded. Therefore the quasi-isomorphisms above yields the same conclusion for the complex $\Hom_{R}(E,I)\otimes_{R}G$. The homology of the $R$-complex $\Hom_{R}(E,I)$ is degreewise finite, as it is isomorphic to $\Ext{}RkF$, so the version of the Amplitude Inequality from \cite[3.1]{FI} (see \cite[5.12]{DGI} for a different proof) implies then that $\Ext{}RkF$ is bounded. This implies that the injective dimension of $F$ is finite, by see \cite[5.5(I)]{AF}.
\end{proof}

  \begin{chunk}\emph{Proof of Theorem} \ref{thm:mainExt}.
   \label{pf:mainExt}
Assume that $\Ext{}RMM$ is bounded.

Let $F\xra{\simeq}M$ be a free resolution by finite free $R$-modules.  The graded module $\hh{\Hom_{R}(F,M)}$ is isomorphic to $\Ext{}RMM$, and thus bounded. Let $G\xra{\simeq}R'$ be a finite resolution by flat $R$-modules.  In view of the quasi-isomorphisms
  \begin{align*}
G\otimes_R\Hom_{R}(F,M)
&\cong\Hom_{R}(F,G\otimes_RM)\\
&\xla{\simeq} \Hom_{R}(F,{G\otimes_RF})\\
&\xra{\simeq} \Hom_{R}(F,R'\otimes_R F)\\
&\cong\Hom_{R'}(R'\otimes_RF, R'\otimes_R F)
 \end{align*}
all the complexes involved have bounded homology.  Since $R'\otimes_RF$ is semifree over $R'$,
the homology of the last complex is $\Ext{}{R'}{R'\otimes_RF}{R'\otimes_RF}$.

The hypothesis is inherited by the $R'$-module $M'=R'\otimes_{R}F$ and the finiteness of $\pdim_{R'}(M')$ (respectively, $\idim_{R}(M')$) implies that of $\pdim_{R}M$ (respectively, $\idim_{R}(M')$); see Proposition~\ref{pr:basechange}. Thus, we replace $R$ and $M$ by $R'$ and $M'$, and assume that $R$ itself has a deformation $Q\to R$, where $Q$ is $\ext$-persistent.

Now from \ref{ch:Extss} one gets that $\Ext {}QMM$ is bounded, and hence that $\pdim_{Q}M$ or $\idim_{Q}M$ is finite. Since $\Ext{}RMM$ is bounded, it then follows from Proposition~\ref{prop:ext} that $\pdim_{R}M$ or $\idim_{R}M$ is finite, as desired.   \qed
  \end{chunk}

\begin{chunk}\emph{Proof of Corollary} \ref{cor:mainExt}
 \label{pf:mainExtc}
When $R$ satisfies the hypotheses of Theorem \ref{thm:mainTor}, Lemmas \ref{lem:nodef} and \ref{lem:friendly} yield
local ring homomorphisms $R\to R'\twoheadleftarrow Q$, such that the first one is of finite flat dimension, the second
one is a deformation, and the ring $Q$ is $\tor$-friendly.  Therefore $Q$ is $\ext$-persistent, by Proposition~\ref{prop:friendly-ext}, and hence so is $R$, by Theorem~\ref{thm:mainExt}.
  \qed
\end{chunk}

\section{Rings with the Auslander-Reiten property}
 \label{sec:ARP}
As before, $R$ denotes a commutative noetherian ring.

\begin{chunk}
  \label{ch:AR}
We say that $R$ has the \emph{Auslander-Reiten property} if every finite $R$-module $M$ satisfies the equality
  \begin{equation}
    \label{eq:mainExt}
\pdim_RM=\sup\{i\in\BZ\mid\Ext iRM{R\oplus M}\ne0\}
  \end{equation}

This is equivalent to the condition that the graded module $\Ext{}RM{R\oplus M}$  is bounded only if $\pdim_RM<\infty$. For when $p=\pdim_{R}M$ is finite, using a minimal free resolution of $M$ we get $\Ext pRMR\ne0$ and $\Ext nRMN=0$ for all $n>p$ and any $R$-module $M$, so that \eqref{eq:mainExt} holds.
  \end{chunk}

\begin{theorem}
\label{thm:friendly-AR}
Each $\ext$-persistent local ring has the Auslander-Reiten property.
\end{theorem}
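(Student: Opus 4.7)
The plan is to apply $\ext$-persistence not to $M$ itself, but to the module $U := R\oplus M$.

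I would first observe that the hypothesis $\Ext iRM{R\oplus M}=0$ for $i\gg 0$ amounts to the pair of vanishings $\Ext iRMR=0=\Ext iRMM$ for $i\gg 0$. The standard additivity of $\Ext$ in both variables then gives a decomposition
\[
\Ext iRUU \cong \Ext iRRR \oplus \Ext iRRM \oplus \Ext iRMR \oplus \Ext iRMM
\]
for every $i$, and since $\Ext iRRR = \Ext iRRM = 0$ whenever $i\ge 1$, the previous observation forces $\Ext iRUU = 0$ for $i\gg 0$.

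Next, because $\hh U$ is finite (it is just the module $U$) and $R$ is $\ext$-persistent, the complex $U$ must be either perfect or quasi-isomorphic to a bounded complex of injective $R$-modules. In the former case, the direct summand $M$ of $U$ is itself perfect, so $\pdim_R M$ is finite. In the latter case, the direct summand $R$ of $U$ has finite injective dimension as an $R$-module, so $R$ is Gorenstein; over a Gorenstein local ring, finite injective dimension and finite projective dimension coincide for finitely generated modules, and since $M$ inherits finite injective dimension as a summand of $U$, one concludes again that $\pdim_R M$ is finite.

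Either branch of the dichotomy yields $\pdim_R M<\infty$, which, as recalled in \ref{ch:AR}, is equivalent to the defining equality \eqref{eq:mainExt}. I do not foresee a genuine obstacle: the only clever step is the decomposition trick of folding $R$ into the test module $U$, which simultaneously upgrades the Auslander--Reiten vanishing hypothesis to a self-$\Ext$ hypothesis on $U$ and guarantees, on the injective-dimension branch of the $\ext$-persistence dichotomy, that $R$ itself must be Gorenstein.
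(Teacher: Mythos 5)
Your proposal is correct and is essentially the paper's own proof: fold $R$ into $U=R\oplus M$, observe $\Ext{}RUU$ is bounded, invoke $\ext$-persistence, and in the injective-dimension branch deduce that $R$ is Gorenstein so that $\idim_RM<\infty$ forces $\pdim_RM<\infty$. You merely make explicit the four-summand decomposition of $\Ext{}RUU$ that the paper leaves implicit.
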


\begin{proof}
Let $R$ be an $\ext$-persistent local ring and $M$ a finite $R$-module such that $\Ext{}RM{R\oplus M}$ is bounded. Then $\Ext{}R{R\oplus M}{R\oplus M}$ is also bounded, and hence the $\ext$-persistence of $R$ implies that $R\oplus M$, equivalently, $M$, has finite projective dimension or $R\oplus M$ has finite injective dimension. It remains to note that, in the latter case, $\idim_RR$ is finite, so $R$ is Gorenstein, and so the finiteness of $\idim_{R}M$ implies that of $\pdim_{R}M$.
\end{proof}

The next result is a direct consequence of the preceding one and Corollary~\ref{cor:mainExt}.

  \begin{corollary}
    \label{cor:AR}
If $R$ satisfies the hypotheses of Theorem \emph{\ref{thm:mainTor}}, then it has the Auslander-Reiten property. \qed
  \end{corollary}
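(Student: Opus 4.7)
The strategy is to chain together two results already established in the excerpt: Corollary~\ref{cor:mainExt} and Theorem~\ref{thm:friendly-AR}. Since the hypotheses of Theorem~\ref{thm:mainTor} posit a local homomorphism $R\to R'$, the ring $R$ is local, and the notions of $\ext$-persistence and the Auslander-Reiten property (which in Section~\ref{sec:ARP} is stated generally but whose key input Theorem~\ref{thm:friendly-AR} is formulated for local rings) apply to it.

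First I would invoke Corollary~\ref{cor:mainExt}, which asserts that any $R$ satisfying the hypotheses of Theorem~\ref{thm:mainTor} is $\ext$-persistent. Then I would feed this into Theorem~\ref{thm:friendly-AR}: given a finite $R$-module $M$ with $\Ext iRM{R\oplus M}=0$ for $i\gg 0$, we have that $\Ext{}R{R\oplus M}{R\oplus M}$ is also bounded, so $\ext$-persistence forces $R\oplus M$ to have finite projective or finite injective dimension. In the latter case $\idim_R R$ is finite, so $R$ is Gorenstein, and over a Gorenstein local ring finite injective dimension of a finite module coincides with finite projective dimension. Either way, $\pdim_R M$ is finite, which is the content of the Auslander-Reiten property.

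There is essentially no obstacle specific to this corollary: the genuinely hard work has already been packaged into Corollary~\ref{cor:mainExt} (which relies on the $\tor$-friendliness of a suitable deformation $Q$ of $\wh R$, established through Lemmas~\ref{lem:nodef} and~\ref{lem:friendly}, together with Proposition~\ref{prop:friendly-ext} and the change-of-rings result Theorem~\ref{thm:mainExt}) and into Theorem~\ref{thm:friendly-AR}. Consequently the proof is simply the observation that \ref{cor:mainExt} produces $\ext$-persistence and \ref{thm:friendly-AR} converts $\ext$-persistence into the Auslander-Reiten property; this explains why the statement is displayed with a \qed and no separate argument.
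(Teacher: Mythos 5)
Your argument is exactly the paper's: Corollary~\ref{cor:mainExt} gives $\ext$-persistence, and Theorem~\ref{thm:friendly-AR} converts this to the Auslander-Reiten property. The paper even flags this explicitly just before the corollary ("The next result is a direct consequence of the preceding one and Corollary~\ref{cor:mainExt}"), and your re-derivation of the Gorenstein step inside Theorem~\ref{thm:friendly-AR} is correct though not needed.
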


To wrap up this discussion we record an analogue of  Theorem~\ref{thm:mainExt};  the proof is also similar, even a little easier for one does not have to contend with finiteness of injective dimension, and so is omitted.

    \begin{theorem}
Assume that there exist a local homomorphism $R\to R'$ of finite flat dimension and a deformation
$R'\twoheadleftarrow Q$.

If $Q$ has the Auslander-Reiten property, then so does $R$. \qed
    \end{theorem}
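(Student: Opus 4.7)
The plan is to mimic the proof of Theorem~\ref{thm:mainExt}, substituting the AR property of $Q$ for Ext-persistence. First, exactly as in \ref{pf:mainExt}, one reduces to the case $R=R'$: the base-change computation there, applied with $N=R$ as well as with $N=F$, shows that both $\Ext{}{R'}{M'}{R'}$ and $\Ext{}{R'}{M'}{M'}$ are bounded whenever $\Ext{}{R}{M}{R\oplus M}$ is, and Proposition~\ref{pr:basechange} lets one descend finiteness of $\pdim_{R'}M'$ to finiteness of $\pdim_{R}M$. So we may assume that $R$ itself admits a deformation $Q\twoheadrightarrow R$ with $Q$ satisfying the AR property, and that $M$ is a finite $R$-module with $\Ext{}{R}{M}{R\oplus M}$ bounded.

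The change-of-rings spectral sequence \ref{ch:Extss} at once yields that $\Ext{}{Q}{M}{M}$ and $\Ext{}{Q}{M}{R}$ are bounded. To invoke the AR property of $Q$, I need the stronger statement that $\Ext{}{Q}{M}{Q}$ is also bounded; I plan to obtain this by a Nakayama argument run along the regular sequence $\bsf=f_1,\dots,f_c$ generating $\Ker(Q\to R)$. Set $Q_i=Q/(f_1,\dots,f_i)$, so $Q_0=Q$ and $Q_c=R$, and proceed by descending induction on $i$ to prove that $\Ext{}{Q_i}{M}{Q_i}$ is bounded, the base case $i=c$ being the hypothesis. For the inductive step, applying \ref{ch:Extss} to the deformation $Q_{i+1}=Q_i/(f_{i+1})$ with $N=Q_{i+1}$ gives boundedness of $\Ext{}{Q_i}{M}{Q_{i+1}}$. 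The short exact sequence $0\to Q_i\xrightarrow{f_{i+1}}Q_i\to Q_{i+1}\to 0$ then induces a long exact sequence in which, for $j$ sufficiently large, multiplication by $f_{i+1}$ on the finite $Q_i$-module $\Ext{j}{Q_i}{M}{Q_i}$ is both surjective and injective; since $f_{i+1}$ lies in the maximal ideal of $Q_i$, Nakayama's lemma forces $\Ext{j}{Q_i}{M}{Q_i}=0$ for $j\gg 0$. Specializing to $i=0$ delivers the required boundedness of $\Ext{}{Q}{M}{Q}$.

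With $\Ext{}{Q}{M}{Q\oplus M}$ now bounded, the AR property of $Q$ gives $\pdim_Q M<\infty$. Boundedness of $\Ext{}{R}{M}{M}$ in particular supplies $\Ext{2j}{R}{M}{M}=0$ for some $j\ge 1$, so Proposition~\ref{prop:ext}(1) applied to $U=M$ lifts finite projective dimension from $Q$ to $R$, yielding $\pdim_R M<\infty$ as desired. The main obstacle, and the only step that genuinely diverges from the proof of Theorem~\ref{thm:mainExt}, is the inductive Nakayama argument establishing that $\Ext{}{Q}{M}{Q}$ is bounded; everything else is a direct transcription, aided by the fact that one need not track finiteness of injective dimension.
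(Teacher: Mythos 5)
Your proposal is correct, and it follows the route the paper sketches: reduce to the case $R=R'$ exactly as in \ref{pf:mainExt}, push boundedness of Ext through the change-of-rings spectral sequence \ref{ch:Extss}, invoke the hypothesis on $Q$, and lift finite projective dimension back to $R$ via Proposition~\ref{prop:ext}(1). The one place where a direct transcription of \ref{pf:mainExt} fails is the passage from boundedness of $\Ext{}{Q}{M}{R}$ to boundedness of $\Ext{}{Q}{M}{Q}$, which is needed before the Auslander--Reiten property of $Q$ can be applied to $M$; your descending induction along the regular sequence, with \ref{ch:Extss} feeding the long exact sequence of $0\to Q_i\xrightarrow{f_{i+1}}Q_i\to Q_{i+1}\to 0$ and Nakayama's lemma closing each step, is the right fix and is correctly executed (the finite generation of $\Ext{j}{Q_i}{M}{Q_i}$ over the noetherian local ring $Q_i$, which Nakayama requires, holds since $M$ is a finite $Q_i$-module). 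The paper's remark that the argument is ``even a little easier'' because injective dimension drops out is accurate for the final step, but somewhat understates the matter: the Nakayama induction you supply is a genuine extra ingredient with no counterpart in the proof of Theorem~\ref{thm:mainExt}, and you were right to single it out. One small point worth making explicit is that the Auslander--Reiten property of \ref{ch:AR} is stated for modules, while after the base change to $R'$ you are handed a complex $M'=R'\otimes_R F$ with bounded finite homology; replacing $M'$ by a sufficiently high syzygy module of $M'$ over $R'$ (or noting that the module formulation extends to such complexes by truncation, as in the proof of Proposition~\ref{pr:persistent}) repairs this without affecting the rest of the argument.
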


In contrast with \eqref{eq:mainExt} the projective dimension of $M$ cannot be determined from where $\Tor{}RMM$ vanishes, though the latter does provide bounds.

\begin{example}
\label{ex:Auslander}
If $M$ is a finite $R$-module of finite projective dimension, then with $s=\sup\{i\mid \Tor iRMM\ne 0\}$ there are bounds
\[
s \leq \pdim_{R} M \leq (\depth R+s)/2\,.
\]
The inequality on the left is clear, whereas one on the right follows  from \cite[2.4,2.7]{FI}.

These inequalities can be strict: Let $R$ be a regular local ring of Krull dimension $2d$. For each integer $1 \le j\le d$, the $R$-module $M_{j}=\syz[R]{d+j}k$ satisfies
\begin{align*}
\pdim_{R} M_{j} & = d - j \\
\Tor iR{M_{j}}{M_{j}} &=0 \quad\text{for $i\ge 1$}
\end{align*}
For a proof of these assertions see, for example,  \cite[2.2]{CIPW}.
\end{example}


\begin{thebibliography}{99}
%
%
%

\bibitem{Av:vpd}
L.~L.~Avramov,
\textit{Modules of finite virtual projective dimension},
Invent.~Math.~\textbf{96} (1989), 71--101.

\bibitem{Av:msri}
L.~L. Avramov,
\textit{Homological asymptotics of modules over local rings},
Commutative Algebra (Berkeley, CA, 1987),
Math. Sci. Res. Inst. Publ., \textbf{15}, Springer, New York, 1989; 33--62.


\bibitem{Av:barca}
L.~L.~Avramov,
\textit{Infinite free resolutions},
Six lectures on commutative algebra (Barcelona, 1996),
Progress in Math.~\textbf{166}, Birkh\"auser, Basel, 1998; 1--118.


\bibitem{AB}
L.~L.~Avramov, R.-O.~Buchweitz,
\textit{Support varieties and cohomology over complete intersections},
Invent.\ Math. \textbf{142} (1991), 285--318.


\bibitem{AF}
L.~L.~Avramov, H.-B.~Foxby,
\textit{Homological dimensions of unbounded complexes},
J. Pure Appl. Algebra \textbf{71} (1991), 129--155.
%

\bibitem{AG}
L.~L.~Avramov, E.~S.~Golod,
\textit{The homology of algebra of the Koszul complex of a local Gorenstein ring},
Math. Notes \textbf{9} (1971), 30--32

\bibitem{AIL}
L.~L. Avramov, S.~B.~Iyengar, J.~Lipman,
\textit{Reflexivity and rigidity for complexes. I. Commutative rings}, Algebra Number Theory \textbf{4} (2010),  47--86.

\bibitem{AINS}
L.~L. Avramov, S.~B.~Iyengar, S.~Nasseh, S.~Sather-Wagstaff,
\textit{Homology over trivial extensions of commutative DG algebras},  Comm. Algebra \textbf{47} (2019), 2341--2356.

\bibitem{AKM}
L.~L. Avramov, A.~R. Kustin, M.~Miller,
\textit{Poincar\'{e} series of modules over local rings of small embedding
codepth or small linking number},
J. Algebra \textbf{118} (1988), 162--204.

\bibitem{AS}
L.~L. Avramov, L.-C.~Sun,
\textit{Cohomology operators defined by a deformation},
J. Algebra \textbf{204} (1998), 684--710.

\bibitem{Bo}
N.~Bourbaki,
\textit{Alg\`ebre Commutative.  Chapitres 8 et 9},
Springer, Berlin, 2006.

\bibitem{BE}
D.~A.~Buchsbaum, D.~Eisenbud,
\textit{Algebra structures for finite free resolutions, and some structure theorems for ideals of codimension $3$},
Amer. J. Math. \textbf{99} (1977), 447--485.


\bibitem{BM}
H.~Bass, P.~Murthy,
\textit{Grothendieck groups and Picard groups of abelian group rings}, Ann. of Math. \textbf{86} (1967), 16--73.

\bibitem{CDT}
O.~Celikbas, H.~Dao, R.~Takahashi,
\textit{Modules that detect finite homological dimensions},
Kyoto J. Math. \textbf{54} (2014), 295--310.

\bibitem{CH}
O.~Celikbas, H.~Holm,
\textit{On modules with self Tor vanishing}, preprint 2018;   \texttt{https://arxiv.org/pdf/1803.09233}

\bibitem{CIPW}
O.~Celikbas, S.~B.~Iyengar, G.~Piepmeyer, R.~Wiegand,
\textit{Torsion in tensor powers of modules},
Nagoya Math. J., \textbf{219} (2015), 113--125.

\bibitem{LWC}
L.~W.~Christensen,
\textit{Semi-dualizing complexes and their Auslander categories}, Trans. Amer. Math. Soc. \textbf{353} (2001),  1839--1883.

\bibitem{DGI}
W. Dwyer, J.~P.~C.~Greenlees, S.~Iyengar, \emph{Finiteness in derived categories of local rings}, Comment. Math. Helv. \textbf{81} (2006), 383--432.
	
\bibitem{FI}
H.-B.~Foxby, S.~Iyengar,
\textit{Depth and amplitude for unbounded complexes},
Commutative Algebra (Grenoble/Lyon, 2001),
Contemp. Math. \textbf{331},
Amer. Math. Soc., Providence, RI, 2003; 119--137.


\bibitem{GaP}
V.~N.~Gasharov, E.~V.~Peeva
\textit{Boundedness versus periodicity over commutative local rings},
Trans. Amer. Math. Soc. \textbf{320} (1990), 569--580.

\bibitem{GP}
D.~Ghosh, T.~Puthenpurakal,
\textit{Vanishing of cohomology over deformations of Cohen-Macaulay local rings of minimal multiplicity},
Glasg. Math. J. \textbf{61} (2019), 705--725.

\bibitem{Go}
E.~S.~Golod,
\textit{On the homology of some local rings},
Soviet Math. Doklady \textbf{3} (1962), 745--749.



\bibitem{HS}
J.~Herzog, M.~Steurich,
\textit{Two applications of change of rings theorems for Poincar\'e series},
Proc. Amer. Math. Soc. \textbf{73} (1979), 163--168.

\bibitem{HJ}
C.~Huneke, D.~Jorgensen,
\textit{Symmetry in the vanishing of Ext over Gorenstein rings},
Math. Scand. \textbf{93} (2003),  161--184.


\bibitem{HW}
C.~Huneke, R.~Wiegand,
\textit{Tensor products of modules, rigidity and local cohomology},
Math. Scand.  \textbf{81} (1997), 161--183.

\bibitem{HSV}
C.~Huneke, L.~M.~\c{S}ega, A.~Vraciu,
\textit{Vanishing of $\ext$ and $\tor$ over some Cohen-Macaulay local rings},
Ill. J. Math. \textbf{48} (2004), 295--317.


\bibitem{JKM}
C.~Jacobsson, A.~R.~Kustin, M.~Miller,
\textit{The Poincar\'e series of a codimension four Gorenstein ring is rational},
J. Pure Appl. Algebra \textbf{38} (1985), 255--275.

\bibitem{Jo}
D.~A.~Jorgensen,
\textit{A generalization of the Auslander-Buchsbaum formula},
J. Pure Appl. Algebra \textbf{144} (1999), 145--155.

\bibitem{Ku1}
A.~R.~Kustin,
\textit{Gorenstein algebras of codimension four and characteristic two},
Comm. Algebra \textbf{15} (1987), 2417--2429.


\bibitem{Ku3}
A.~R.~Kustin,
\textit{The minimal resolution of a codimension four almost complete intersection is a DG algebra},
J. Algebra, \textbf{168} (1994), 371--399.

\bibitem{KM1}
A.~R.~Kustin, M.~Miller,
\textit{Algebra structures on minimal resolutions of Gorenstein rings of embedding codimension four},
Math. Z. \textbf{173} (1980), 171--184.

\bibitem{KM2}
A.~R.~Kustin, M.~Miller,
\textit{Multiplicative structure on resolutions of algebras defined by Herzog ideals},
J. London Math. Soc. (2) \textbf{28} (1983), 247--260.

\bibitem{KM3}
A.~R.~Kustin, M.~Miller,
\textit{Classification of the Tor-algebras of codimension four Gorenstein local rings},
Math. Z. \textbf{190} (1985), 341--355.

\bibitem{KP}
A.~R.~Kustin, S.~M.~Palmer Slattery,
\textit{The Poincar\'e series of every finitely generated module over a codimension four
almost complete intersection is a rational function},
J. Pure Appl. Algebra \textbf{95} (1997), 271--295.

\bibitem{Le1}
J.~Lescot,
\textit{Asymptotic properties of Betti numbers of modules over certain rings},
J. Pure Appl. Algebra \textbf{38} (1985), 287--298.

\bibitem{Le2}
J.~Lescot,
\textit{S\'eries de Poincar\'e et modules inertes},
J. Algebra \textbf{132} (1990), 22--49.

\bibitem{Lj}
W.~Ljunggren,
\textit{On the irreducibility of certain trinomials and quadrinomials},
Math. Scand. \textbf{5} (1960),  65--70.

\bibitem{LM}
J.~Lyle, J.~Monta\~no,
\textit{Extremal growth of Betti numbers and rigidity of (co)homology}, Trans. Amer. Math. Soc., to appear.  \texttt{https://arxiv.org/abs/1903.12324}


\bibitem{LW}
J.~Lyle, J.~Monta\~{n}o, S.~Sather-Wagstaff,
\emph{Tor-persistence in local rings},  in preparation.

\bibitem{Mi}
C.~Miller,
\textit{Complexity of tensor products of modules and a theorem of Huneke-Wiegand}, Proc. Amer. Math. Soc. \textbf{126} (1998), 53--60.


\bibitem{NT}
S.~Nasseh, R.~Takahashi,
\textit{Local rings with quasi-decomposable maximal ideal}, Math. Proc. Camb. Phil. Soc., \textbf{168} (2020), 305--322.

\bibitem{NY}
S.~Nasseh, Y.~Yoshino,
\textit{On $\ext$-indices of ring extensions},
J. Pure Appl. Algebra \textbf{213} (2009), 1216--1223.



\bibitem{Se1}
L.~M.~\c{S}ega,
\textit{Vanishing of cohomology over Gorenstein rings of small codimension},
Proc. Amer. Math. Soc. \textbf{131} (2003), 2313--2323.

\bibitem{Se2}
L.~M.~\c{S}ega,
\textit{Self-tests for freeness over commutative Artinian rings},
J. Pure Appl. Algebra \textbf{215} (2011), 1263--1269.

\bibitem{Sl}
E.~S.~Selmer,
\textit{On the irreducibility of certain trinomials},
Math. Scand. \textbf{4} (1959),  283--302.

\end{thebibliography}
\end{document}